\theoremstyle{plain}
\newtheorem{theorem}{Theorem}[section]
\newtheorem{lemma}[theorem]{Lemma}
\newtheorem{proposition}[theorem]{Proposition}
\newtheorem{corollary}[theorem]{Corollary}
\theoremstyle{definition}
\newtheorem{definition}[theorem]{Definition}
\newtheorem{problem}[theorem]{Problem}
\newtheorem{example}[theorem]{Example}
\newtheorem{remark}[theorem]{Remark}
\newtheorem{remarks}[theorem]{Remarks}
\numberwithin{equation}{theorem}
\newtheorem{claim}[theorem]{Claim}
\begin{document}

\title[Automorphisms of canonically polarized surfaces in char 2]{Automorphisms of smooth canonically polarized surfaces in characteristic 2.}
\author{Nikolaos Tziolas}
\address{Department of Mathematics, University of Cyprus, P.O. Box 20537, Nicosia, 1678, Cyprus}
\email{tziolas@ucy.ac.cy}

\subjclass[2000]{Primary 14J50, 14DJ29, 14J10; Secondary 14D23, 14D22.}

\dedicatory{This paper is dedicated to my wife Afroditi and my son Marko.}

\keywords{Algebraic geometry, canonically polarized surfaces, automorphisms, characteristic p}

\begin{abstract}
This paper investigates the structure of the automorphism scheme of a smooth canonically polarized surface $X$ defined over an algebraically closed field of characteristic 2. In particular it is investigated when $\mathrm{Aut}(X)$ is not smooth. This is a situation that appears only in positive characteristic and it is closely related to the structure of the moduli stack of canonically polarized surfaces. Restrictions on certain numerical invariants of $X$ are obtained in order for $\mathrm{Aut}(X)$ to be smooth or not and information is provided about the structure of the component of $\mathrm{Aut}(X)$ containing the identity. In particular, it is shown that if $X$ is a smooth canonically polarized surface with $1\leq K_X^2\leq 2$ with non smooth automorphism scheme, then $X$ is uniruled. Moreover, if $K_X^2=1$, then $X$ is simply connected, unirational and  $\mathrm{p}_g(X) \leq 1$. Moreover, $X$ is the purely inseparable quotient of a rational surface by a rational vector field.
\end{abstract}

\maketitle

\section{Introduction}
One of the most important problems in algebraic geometry is the classification up to isomorphism of algebraic varieties defined over an algebraically closed field $k$. In order to deal with this problem, the class of varieties is divided into smaller classes where a reasonable answer is expected to exist. Quite generally, once such a class $\mathcal{C}$ is chosen, the corresponding moduli functor $\mathcal{M}_{\mathcal{C}}\colon Sch(k) \rightarrow (Sets)$ is defined by setting for any $k$-scheme $S$, $\mathcal{M}_{\mathcal{C}}(S)$ to be the set of isomorphism classes of flat morphisms $X\rightarrow S$ whose fibers are in $\mathcal{C}$. Sometimes, depending on the case, some extra structure on the family morphisms are required to create a ``reasonable'' moduli functor. The best that one could hope for is that the functor is representable, which means hat 
there is a universal family $\mathcal{X} \rightarrow \mathrm{M}_{\mathcal{C}}$ such that any other family is obtained from it by base change. In this case $\mathrm{M}_{\mathcal{C}}$ is called a fine moduli space for $\mathcal{M}_{\mathcal{C}}$. Unfortunately fine moduli spaces rarely exist. The reason for this failure is the presence of nontrivial automorphisms of the objects that one wants to parametrize.  

One way of dealing with this difficulty is instead of looking for a universal family, to settle for less and search for a variety $\mathrm{M}_{\mathcal{C}}$ whose $k$-points are in one to one correspondence with the varieties of the moduli problem and which satisfies some uniqueness property. Such a variety is called a coarse 
moduli space. However, the biggest disadvantage of this approach is that usually the coarse moduli space does not support a family and therefore it gives very little 
information about families of varieties in the moduli problem. In order to study families as well, the universality condition is relaxed and one looks for a so called modular family. Loosely speaking a modular family is a family $X \rightarrow S$ such that up to \'etale base change, any other family is obtained from it by base change and that for any closed point $s \in S$, the completion $\hat{\mathcal{O}}_{S,s}$ prorepresents the local deformation functor $Def(X_s)$. In some sense a modular family is a connection between the local moduli functor (which behaves well) and the global one. In modern language one says that the moduli stack associated to the moduli problem is Deligne-Mumford~\cite{DM69}.
  
In dimension 1 curves are separated by their genus $g$. The moduli functor $\mathcal{M}_g$ of smooth curves of genus $g$ defined over an algebraically closed field has a coarse moduli space $\mathrm{M}_g$. For $g=0$ it is a reduced point, for $g=1$ it is the $j$-line and for $g\geq 2$ it is irreducible of dimension $3g-3$ and it admits a compactification $\bar{\mathrm{M}}_g$ whose boundary points correspond to stable curves, i.e, to reduced curves $C$ with at worst nodes as singularities and $\omega_C$ ample~\cite{DM69}. In all cases the corresponding moduli stack is Deligne-Mumford. 
   
In dimension 2, surfaces are divided according to their kodaira dimension $\kappa$ which takes the values $-\infty, 0, 1$ and $ 2$. Surfaces with kodaira dimension 2 are the corresponding cases to the case of curves of genus $\geq 2$ and are called surfaces of general type. Early on in the theory of moduli of surfaces of general type in characteristic zero, it was realized that the correct objects to parametrize are not the surfaces of general type themselves but their canonical models.  For compactification reasons, the moduli functor is extended to include the so called stable surfaces. These are reduced two dimensional schemes $X$ with semi-log-canonical (slc) singularities such that there is an integer $N$ such that $\omega^{[N]}_X$ is ample. They are the higher dimensional analog of stable curves. Then for any fixed integer valued function $H(m)$  the moduli functor $\mathcal{M}^s_{H} \colon Sch(k) \rightarrow (Sets)$ of stable surfaces with fixed Hilbert polynomial is defined~\cite{KSB88}~\cite{Ko10}. 
It is known 
that $\mathcal{M}^s_{H}$ has a separated coarse moduli space $\mathrm{M}^s_{H}$ which is of finite type over the base field $k$. Moreover, the corresponding moduli stack is a separated, proper Deligne-Mumford stack of finite type~\cite{KSB88}~\cite{Ko97}~\cite{Ko10}.  

This paper is supposed to be a small contribution into the study of the moduli of canonically polarized surfaces in positive characteristic. It also inspires to bring attention to the positive characteristic case and motivate people to work on it. In particular, it is a first attempt to study the following general problem.

\begin{problem}
Study the moduli stack of stable varieties with fixed Hilbert polynomial defined over an algebraically closed field of characteristic $p>0$. In particular, is it proper, Deligne-Mumford or of finite type? In the case when one of these properties fails, why does it fail and how can the moduli problem be modified in order for the corresponding stack to satisfy them?
\end{problem}

As mentioned earlier, there has been tremendous progress in the characteristic zero case lately. In particular, the case of surfaces has been completely settled. However, the positive characteristic case is to the best of my knowledge a wide open area. The main reason is probably the many complications and pathologies that appear in positive characteristic. For example, Kodaira vanishing fails and the minimal model program and semistable reduction, two ingredients essential in the compactification of the moduli in the characteristic zero case, are not known, at the time of this writing, to work in positive characteristic. Moreover, the moduli stack of smooth canonically polarized surfaces is not Deligne-Mumford. The reason for this failure in positive characteristic is the existense of smooth canonically polarized surfaces with non smooth automorphism scheme~\cite{La83},~\cite{SB96},~\cite{Li08}.  This does not happen in characteristic zero simply because every group scheme in characteristic zero is smooth. 
Therefore the non smoothness of the automorphism scheme is an essential obstruction for the moduli stack of canonically polarized surfaces to be Deligne-Mumford. 

However, in the case of surfaces there is reason to believe that it would be possible to obtain a good moduli theory even in positive characteristic. Resolution of singularities exists for surfaces. Moreover, canonical models of surfaces are classically known to exist and the semistable minimal model program for semistable threefolds holds~\cite{Kaw94} . In addition, the definition of stable surfaces is characteristic free and therefore the definition of the functor of stable surfaces applies in positive characteristic too. Finally, canonically polarized surfaces with fixed Hilbert polynomial are bounded~\cite{Ko84} and Koll\'ar's quotient results show that there exist a separated coarse moduli space of finite type for the moduli functor of canonically polarized surfaces over $\mathrm{Spec}\mathbb{Z}$ with a fixed Hilbert polynomial~\cite{Ko84}. 

Based on these observations, the first step in the study of the moduli of canonically polarized surfaces in positive characteristic is to investigate why and when the automorphism scheme is not smooth. This investigation will tell us how the moduli functor can be modified in order to obtain proper Deligne-Mumford stacks and how to deal with surfaces with non smooth automorphism schemes. Naive restrictions of the moduli problem to surfaces with smooth automorphism scheme does not work because this property is not deformation invariant, as shown by example~\ref{ex2}. In any case, it is essential to study surfaces with non smooth automorphism scheme, simply because they exist.
  
The purpose of this paper is to investigate the structure or the automorphism scheme of a smooth canonically polarized surface defined over an algebraically closed field of characteristic 2, and in particular to find conditions under which the automorphism scheme is either smooth or not. In particular to find deformation invariant conditions for the smoothness of the automorphism scheme. This way one could construct a proper Deligne-Mumford substack of the moduli stack of stable surfaces. The reasons that this study is restricted to characteristic 2 are purely of technical nature and will be explained later.

The main result of this paper is the following.
\begin{theorem}\label{main-theorem}
Let $X$ be a smooth canonically polarized surface defined over an algebraically closed field of characteristic 2. Suppose that $\mathrm{Aut}(X)$ is not smooth. Then one of the following happens.
\begin{enumerate}
\item $K_X^2 \geq 3$.
\item $K_X^2=2$ and $X$ is uniruled. Moreover, if $\chi(\mathcal{O}_X)\geq 2$, then $X$ is unirational and $\pi_1^{et}(X)=\{1\}$.
\item $K_X^2=1$, $\pi_1^{et}(X)=\{1\}$, $p_g(X) \leq 1$ and $X$ is unirational. 
\end{enumerate}
Moreover, if $1 \leq K_X^2\leq 2$, then $X$ is the quotient of a ruled or rational surface (maybe singular) by a rational vector field.

Finally, if $2\leq K_X^2 \leq 4$ and $\mathrm{Aut}(X)$ contains $\mu_2$ as a subgroup scheme, then $X$ is uniruled and $\pi_1^{et}(X)=\{1\}$. 
\end{theorem}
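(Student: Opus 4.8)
The plan is to translate the hypothesis $\mu_2 \subset \mathrm{Aut}(X)$ into the existence of a suitable vector field and then analyze the associated purely inseparable quotient. A homomorphism $\mu_2 \la \mathrm{Aut}(X)$ is the same datum as a $\mathbb{Z}/2$-grading of $\mathcal{O}_X$, equivalently a global vector field $\partial \in H^0(X, T_X)$ of multiplicative type, i.e. one satisfying $\partial^{[2]} = \partial$. First I would form the quotient $\pi\colon X \la Y = X/\partial$, which is a finite purely inseparable morphism of degree $2$. The crucial virtue of the multiplicative hypothesis is that, since $\mu_2$ is linearly reductive, the quotient singularities of $Y$ are toric; in characteristic $2$ the isolated ones are nodes, i.e. rational double points of type $A_1$. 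Hence if $\rho\colon \tilde{Y}\la Y$ is the minimal resolution then $\rho$ is crepant and $K_{\tilde Y}^2 = K_Y^2$. I would then saturate the image of $\partial$ to obtain the foliation $\mathcal{F} = \mathcal{O}_X(L) \hookrightarrow T_X$, where the effective divisor $L = \langle\partial\rangle \geq 0$ is the codimension one part of the zero locus of $\partial$, so that $c_1(\mathcal{F}) = L$.

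Next I would invoke the canonical bundle formula of Rudakov--Shafarevich (in the form of Ekedahl for $1$-foliations), which for $p = 2$ reads
\begin{equation*}
K_X = \pi^{\ast}K_Y + L,
\end{equation*}
so that $\pi^{\ast}K_Y = K_X - L$. The strategy is to show that the bound $K_X^2 \leq 4$ forces $L$ to be so positive that $K_Y$, and hence $K_{\tilde Y}$, cannot be nef. For this I would assemble three numerical inputs: the self-intersection identity $2K_Y^2 = (\pi^{\ast}K_Y)^2 = K_X^2 - 2K_X\cdot L + L^2$; the fact that $\partial$ is a section of $T_X \otimes \mathcal{O}_X(-L)$ with only isolated zeros, whose number $c_2(X) + K_X\cdot L + L^2$ is nonnegative and, up to the $A_1$ count, equals the number of singular points of $Y$; and Noether's formula $c_2(X) = 12\chi(\mathcal{O}_X) - K_X^2$ on $X$. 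Combining these with $K_X$ ample and $2\leq K_X^2 \leq 4$ should pin $K_{\tilde Y}^2 = K_Y^2$ into a range incompatible with $K_{\tilde Y}$ being pseudoeffective, yielding $\kappa(Y) = \kappa(\tilde Y) = -\infty$.

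Once $\tilde Y$ has negative Kodaira dimension it is birationally ruled, and I would deduce that $X$ is uniruled as follows. A general point of $\tilde Y$ lies on a rational curve; transporting it to $Y$ and pulling it back along the purely inseparable morphism $\pi$ produces a curve on $X$ whose normalization is a purely inseparable cover of $\mathbb{P}^1$, hence again $\mathbb{P}^1$. Thus rational curves sweep out $X$ and $X$ is uniruled, notwithstanding that it is of general type. For the fundamental group I would use that $\pi$ is a universal homeomorphism, so that $\pi_1^{et}(X) \cong \pi_1^{et}(Y)$, and that resolving the $A_1$ points of $Y$ (whose exceptional loci are trees of rational curves) does not change the \'etale fundamental group, giving $\pi_1^{et}(X)\cong \pi_1^{et}(\tilde Y)$. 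It then remains to see that the base of the ruling of $\tilde Y$ is rational: the same numerical analysis, now bounding the irregularity, should force $q(\tilde Y) = 0$ and a rational base, so that $\tilde Y$ is a rational surface and $\pi_1^{et}(X) \cong \pi_1^{et}(\tilde Y) = \{1\}$.

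The main obstacle is the middle step, namely turning $K_X^2 \leq 4$ into the inequality that excludes $\kappa(Y) \geq 0$ and forces the ruling base to be $\mathbb{P}^1$. This requires genuine control of the divisorial part $L$ of $\partial$ and of the number and type of the quotient singularities; the multiplicative hypothesis is precisely what guarantees these singularities are rational double points, so that $\rho$ is crepant with $K_{\tilde Y}^2 = K_Y^2$ and the Enriques--Kodaira classification can be applied to $\tilde Y$. Carrying this out delicately across the three values $K_X^2 = 2, 3, 4$ (the odd case $K_X^2 = 3$ forcing $L \neq 0$ on parity grounds, since $2K_Y^2$ is even) is where the real work lies; by contrast, the passage from $\kappa(Y) = -\infty$ to the uniruledness and simple connectedness of $X$ is comparatively formal.
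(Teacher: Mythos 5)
Your proposal addresses only half of the hypothesis. Non-smoothness of $\mathrm{Aut}(X)$ is equivalent (Proposition~\ref{prop1} and Corollary~\ref{subgroup-of-aut}) to the existence of a nonzero global vector field $D$ with $D^2=D$ \emph{or} $D^2=0$, that is, to a $\mu_2$ \emph{or} an $\alpha_2$ subgroup scheme of $\mathrm{Aut}(X)$. You analyze only the multiplicative ($\mu_2$) case, and everything you lean on --- linear reductivity, toric quotient singularities, $A_1$ points, crepancy of the minimal resolution, the clean count of isolated fixed points --- is special to that case. In the additive ($\alpha_2$) case the quotient $Y$ is still Gorenstein but its singularities need not be canonical or even rational, the resolution of the singularities of the vector field need not dominate $Y$ by the minimal resolution, and the divisorial part of $D$ can be singular, non-reduced, and an integral divisor of $D$. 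This is exactly why the theorem's general conclusion is weaker than its final clause ($K_X^2\le 2$ rather than $\le 4$, ``uniruled'' rather than the full $\mu_2$ conclusions), and why the paper needs the separate and substantially different arguments of Theorem~\ref{additive-type} (for instance, the factorization through the geometric Frobenius showing that $\Delta=0$ forces $K_X^2\ge 4$, and the exclusion of $p_g(X)=2$ when $K_X^2=1$ via a case-by-case analysis of the cycle structure of $\Delta$). As written, your argument can at best establish the final clause of the statement, not the main trichotomy.

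Even within the $\mu_2$ case there is a second gap: your middle step is to show that $K_X^2\le 4$ forces $K_{\tilde Y}$ not to be pseudoeffective, hence $\kappa(Y)=-\infty$. That is not what happens. The case $\kappa(Y)=0$ survives every numerical constraint you list: there $K_{Y'}\equiv 0$, the minimal resolution $Y'$ is shown to be a K3 surface, and the fixed-point count gives $N_{fix}=2K_X^2+c_2(X)=K_X^2+12\ge 13$ nodes on $Y$; unirationality and simple connectedness of $X$ then come not from a ruling but from Shepherd--Barron's theorem that a K3 surface with a special configuration of rank at least $13$ is unirational. Without that input your scheme cannot conclude in this case, and it is also where the supersingular Godeaux conclusion for $K_X^2=1$ originates. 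Relatedly, in the $\kappa(Y)=-\infty$ case the exclusion of $K_X^2=1$ and the bounds on $\chi(\mathcal{O}_X)$ are obtained in the paper by showing that $b_1(X)=0$ forces $X$ to lift to $W_2(k)$, whence Kodaira--Nakano vanishing (Deligne--Illusie) kills all global vector fields --- again not a consequence of the numerology you set up, and not the formal step you describe.
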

If $\mathrm{Aut}(X)$ is not smooth then it contains either $\mu_2$ or $\alpha_2$. Theorem~\ref{main-theorem} shows that $\mathrm{Aut}(X)$ having a subgroup isomorphic to $\mu_2$ is a much more restrictive condition as having one isomorphic to $\alpha_2$.

\begin{corollary}\label{main-corollary}
Let $X$ be a smooth canonically polarized surface defined over an algebraically closed field of characteristic 2. Suppose that either $K_X^2=2$ and $X$ is not uniruled, or $K_X^2=1$ and one of the following happens
\begin{enumerate}
\item $p_g(X)=2$.
\item $\chi(\mathcal{O}_X) =3$.
\item $\pi_1^{et}(X) \not= \{1\}$.
\item $X$ is not unirational.
\end{enumerate}
Then $\mathrm{Aut}(X)$ is smooth.
\end{corollary}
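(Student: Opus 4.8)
The plan is to deduce this corollary directly from Theorem~\ref{main-theorem} by contraposition. Under each of the stated hypotheses, I would show that the assumption that $\mathrm{Aut}(X)$ is \emph{not} smooth contradicts the trichotomy supplied by the main theorem, thereby forcing $\mathrm{Aut}(X)$ to be smooth.

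First I would handle the case $K_X^2 = 2$ with $X$ not uniruled. If $\mathrm{Aut}(X)$ were not smooth, then Theorem~\ref{main-theorem} places $X$ in one of its three alternatives. Since $K_X^2 = 2$, alternatives (1) and (3) are excluded on numerical grounds, so only alternative (2) remains, and this asserts that $X$ is uniruled. This contradicts the hypothesis, whence $\mathrm{Aut}(X)$ is smooth.

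Next I would treat the case $K_X^2 = 1$. Here, if $\mathrm{Aut}(X)$ were not smooth, the main theorem necessarily places $X$ in alternative (3), which yields simultaneously $\pi_1^{et}(X) = \{1\}$, $p_g(X) \le 1$, and the unirationality of $X$. Each of the four sub-hypotheses then contradicts one of these conclusions: $p_g(X) = 2$ contradicts $p_g(X) \le 1$; the conditions $\pi_1^{et}(X) \ne \{1\}$ and the failure of unirationality contradict the corresponding assertions directly; and the condition $\chi(\mathcal{O}_X) = 3$ is handled by means of the standard numerical relation $\chi(\mathcal{O}_X) = 1 - q + p_g$ for a smooth surface, where $q = h^1(\mathcal{O}_X)$ and $p_g = h^2(\mathcal{O}_X)$. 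Since $q \ge 0$, this gives $p_g = 2 + q \ge 2$, again contradicting $p_g \le 1$.

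Since the argument is purely a matter of reading off the contrapositive of Theorem~\ref{main-theorem}, there is no substantial obstacle. The only point demanding a little care is the $\chi(\mathcal{O}_X) = 3$ subcase, where one must invoke the relation between $\chi$, the irregularity $q$, and the geometric genus $p_g$ in order to convert the hypothesis on $\chi$ into the bound on $p_g$ that is excluded by alternative (3). With that translation in place, every hypothesis contradicts alternative (3), and the corollary follows in all cases.
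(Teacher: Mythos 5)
Your proof is correct and is exactly the argument the paper intends: the corollary is stated as an immediate contrapositive of Theorem~\ref{main-theorem}, and your case analysis (including the translation $\chi(\mathcal{O}_X)=3 \Rightarrow p_g(X)=2+q\geq 2$, which contradicts the bound $p_g(X)\leq 1$ in alternative (3)) fills in the only detail that is not a literal negation of one of the theorem's conclusions. Nothing further is needed.
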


In particular, Theorem~\ref{main-theorem} applies to the case of Godeaux surfaces. Considering their significance I find it appropriate to have a statement for this case.
\begin{corollary}\label{godeaux}
Let $X$ be a canonically polarized Godeaux surface defined over an algebraically closed field of characteristic 2. Let $\mathrm{Pic}^{\tau}(X)$ be the torsion sub group scheme of $\mathrm{Pic}(X)$. Suppose that $\mathrm{Aut}(X)$ is not smooth. Then $X$ is unirational, $\pi_1^{et}(X)=\{1\}$ and $\mathrm{Pic}^{\tau}(X)$ is either reduced of order $2^n$, $n \in \mathbb{N}$, or isomorphic to $\alpha_2\times N$, where $N$ is a finite reduced commutative group scheme of order $2^n$, $n\in \mathbb{N}$.
\end{corollary}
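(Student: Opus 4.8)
The plan is to deduce the statement from Theorem~\ref{main-theorem} together with the standard relationship between the torsion in $\mathrm{Pic}$ and the (abelianized) fundamental group scheme of $X$. Since $X$ is a Godeaux surface we have $K_X^2=1$ and $\chi(\mathcal{O}_X)=1$, so case (3) of Theorem~\ref{main-theorem} applies verbatim: $X$ is unirational, $\pi_1^{et}(X)=\{1\}$ and $p_g(X)\leq 1$. This already yields the first two assertions. Moreover, from $\chi(\mathcal{O}_X)=1-h^1(\mathcal{O}_X)+h^2(\mathcal{O}_X)=1$ and $h^2(\mathcal{O}_X)=p_g(X)$ one gets $h^1(\mathcal{O}_X)=h^2(\mathcal{O}_X)=p_g(X)\leq 1$, the numerical input that will control the infinitesimal part of $\mathrm{Pic}^{\tau}(X)$.

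Next I would pin down the shape of $G:=\mathrm{Pic}^{\tau}(X)$ as a finite commutative group scheme. Because $X$ is unirational its Albanese variety is trivial, hence $\mathrm{Pic}^0(X)_{\mathrm{red}}=0$ and $G$ is finite. Over the algebraically closed base the connected-\'etale sequence splits, so $G\cong \mathrm{Pic}^0(X)\times N$ with $\mathrm{Pic}^0(X)$ infinitesimal and $N$ \'etale, i.e. a constant finite abelian group. I would then invoke the Cartier duality between $G$ and the abelianization of the fundamental group scheme of $X$: the prime-to-$2$ torsion of $G$ and the multiplicative-type part of $\mathrm{Pic}^0(X)$ are dual to the corresponding \'etale quotients of $\pi_1(X)$, and any nontrivial such part would produce a nontrivial connected \'etale cover. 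Concretely, a prime-to-$2$ torsion line bundle gives a nontrivial \'etale cyclic cover, while an inclusion $\mu_{2^r}\subseteq \mathrm{Pic}^0(X)$ gives a nontrivial \'etale $\mathbb{Z}/2^r$-cover; both are excluded by $\pi_1^{et}(X)=\{1\}$. Hence $N$ has order $2^n$ and $\mathrm{Pic}^0(X)$ is unipotent (local-local).

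It remains to identify the infinitesimal unipotent group scheme $U:=\mathrm{Pic}^0(X)$. Its tangent space is $\mathrm{Lie}(U)=H^1(X,\mathcal{O}_X)$, of dimension $p_g(X)\leq 1$. If this dimension is $0$ then $U=0$ and $G=N$ is reduced of order $2^n$; note this is exactly the case $p_g(X)=0$. If it is $1$ (the case $p_g(X)=1$), then $U$ is connected unipotent with one-dimensional Lie algebra, so $U\cong \alpha_{2^m}$ for some $m\geq 1$, and the claim reduces to $m=1$. To force $m=1$ I would compute the Dieudonn\'e module $M$ of $U$: for $U\cong\alpha_{2^m}$ the cokernels of Frobenius and Verschiebung on $M$ record $\dim_k\mathrm{Lie}(U)=1$ and $\dim_k\mathrm{Lie}(U^{\vee})=m$ respectively. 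Bounding $\dim_k\mathrm{Lie}(U^{\vee})$ by the tangent space of the fundamental group scheme of $X$, which is governed by $p_g(X)=h^1(\mathcal{O}_X)=h^2(\mathcal{O}_X)\leq 1$, gives $m\leq 1$, whence $U\cong\alpha_2$ and $G\cong \alpha_2\times N$.

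The main obstacle is precisely this last step: ruling out the longer groups $\alpha_{2^m}$, $m\geq 2$, equivalently showing that $\mathrm{Pic}^0(X)$ is killed by Frobenius. Everything preceding it is formal once Theorem~\ref{main-theorem} is available. For the crux I would pursue one of two routes: (a) make the Dieudonn\'e-duality bound above precise, showing that $\mathrm{Lie}(U^{\vee})$ is controlled by $H^2(X,\mathcal{O}_X)$ via Serre duality and the Cartier-dual (formal Brauer) side of the Artin--Mazur formal groups; or (b) exploit the explicit geometry from the final clause of Theorem~\ref{main-theorem}, namely that $X$ is the quotient of a rational or ruled surface $Y$ by a rational vector field, and analyze the induced purely inseparable degree-$2$ morphism $Y\to X$ on Picard schemes to show that the infinitesimal part of $\mathrm{Pic}^0(X)$ cannot have length exceeding $2$.
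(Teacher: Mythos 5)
Your reduction to Theorem~\ref{main-theorem}(3) and your elimination of odd-order torsion and of any multiplicative part are exactly what the paper does: its entire justification of the corollary is the sentence following the statement, which appeals to $\pi_1^{et}(X)=\{1\}$ together with the correspondence $\mathrm{Hom}(G,\mathrm{Pic}^{\tau}(X))\cong H^1_{fl}(X,G^{\ast})$ recalled in Section~\ref{sec-0}, and that is precisely the content of your second and third paragraphs. The genuine gap is the step you yourself flag as the crux, and neither of your two routes closes it. First, the classification claim you feed into it is false: a connected unipotent finite commutative group scheme over $k$ with one-dimensional Lie algebra need not be $\alpha_{2^m}$. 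The self-dual local-local group scheme of order $4$ (the Frobenius kernel of a supersingular elliptic curve in characteristic $2$) has contravariant Dieudonn\'e module $k\cdot e\oplus k\cdot Fe$ with $Fe=Ve\neq 0$ and $F^2e=0$, hence $\dim_k M/FM=1$, yet it is not isomorphic to any $\alpha_{2^m}$. Second, route (a) cannot work as sketched: the only duality the paper's setup provides is $\mathrm{Hom}(G,\mathrm{Pic}^{\tau}(X))\cong H^1_{fl}(X,G^{\ast})$, and taking $G=\alpha_2$ bounds $\mathrm{Hom}(\alpha_2,U)$ by $\ker\bigl(F\mid H^1(\mathcal{O}_X)\bigr)$, which is at most one-dimensional; but $\mathrm{Hom}(\alpha_2,\alpha_{2^m})$ is one-dimensional for \emph{every} $m$, so this invariant sees only the Frobenius kernel of $U$ and cannot separate $\alpha_2$ from longer groups. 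The asserted inequality $\dim_k\mathrm{Lie}(U^{\vee})\leq h^2(\mathcal{O}_X)$ is nowhere justified: $H^2(\mathcal{O}_X)$ is the tangent space of the formal Brauer group, not of the Cartier dual of $\mathrm{Pic}^{\tau}(X)$. Route (b) is only a declaration of intent.

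In fairness, the paper is no more explicit at this point: it delegates the identification of the local part to the singular/supersingular dichotomy of Section~\ref{sec-0} (for a nonclassical Godeaux surface $h^1(\mathcal{O}_X)=1$ and $F^{\ast}$ on $H^1(\mathcal{O}_X)$ is either bijective, which forces $\mu_2\subseteq\mathrm{Pic}^{\tau}(X)$ and hence a connected \'etale double cover, excluded by $\pi_1^{et}(X)=\{1\}$, or zero, which gives $\alpha_2\subseteq\mathrm{Pic}^{\tau}(X)$) together with the structure theory of~\cite{Li09}. If you want a self-contained proof that the local part has length exactly $2$, you need an input beyond $h^1(\mathcal{O}_X)\leq 1$ --- for instance the bound on the order of $\mathrm{Pic}^{\tau}$ of a numerical Godeaux surface from~\cite{Li09}, or an Euler-characteristic computation on the degree-$4$ purely inseparable torsor that a local subgroup scheme of order $4$ in $\mathrm{Pic}^{\tau}(X)$ would produce. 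As written, your proposal establishes the corollary except that ``$\alpha_2$'' must be weakened to ``a local-local group scheme with one-dimensional Lie algebra''.
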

The group scheme structure of $\mathrm{Pic}^{\tau}(X)$ in the case that $\mathrm{Aut}(X)$ is not smooth follows from the fact that $\pi_1^{et}(X)=\{1\}$,  and the discussion about the possible structure of it in section~\ref{sec-0}. From the characteristics zero and five cases that have been extensively studied~\cite{Re78},~\cite{La83},\cite{Li09}, one might expect that if $\mathrm{Pic}^{\tau}(X)$ is reduced, then it is either $\mathbb{Z}/2\mathbb{Z}$ or $\mathbb{Z}/4\mathbb{Z}$. If on the other hand it is not reduced, then $\mathrm{Pic}^{\tau}(X)\cong \alpha_2$. However, there is no classification in the characteristic 2 case yet and since many pathologies appear in characteristic two, one must be careful. I believe it will be interesting to know if there are examples of canonically polarized surfaces with nonreduced automorphism scheme but reduced Picard scheme.

Finally, the conditions of Corollary~\ref{main-corollary} on the euler characteristic and the \'etale fundamental group are deformation invariant and hence we get the following.
\begin{corollary}
Let $\mathcal{M}_{1,3}$ and $\mathcal{M}_{1,ns}$ be the moduli stacks of canonically polarized surfaces $X$ with $K_X^2=1$ and either $\chi(\mathcal{O}_X)=3$ or $\pi_1^{et}(X)\not= \{1\}$, respectively. Then $\mathcal{M}_{1,3}$ and $\mathcal{M}_{1,ns}$ are Deligne-Mumford stacks.
\end{corollary}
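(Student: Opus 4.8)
The plan is to deduce the result formally from Corollary~\ref{main-corollary} together with the standard characterization of Deligne--Mumford stacks in terms of the automorphism group schemes of their objects. First I would record that, by the cited boundedness and representability results (\cite{KSB88},~\cite{Ko84},~\cite{Ko10}), the moduli stack $\mathcal{M}^s_H$ of stable surfaces with fixed Hilbert polynomial $H$ is a separated algebraic stack of finite type over $k$, and that its locus of smooth surfaces is an open substack. Since for a canonically polarized surface Riemann--Roch gives $\chi(\mathcal{O}_X(mK_X))=\chi(\mathcal{O}_X)+\binom{m}{2}K_X^2$, fixing $H$ is the same as fixing the pair $(K_X^2,\chi(\mathcal{O}_X))$; in particular $\mathcal{M}_{1,3}$ is exactly the smooth locus of the stack attached to the Hilbert polynomial with $K_X^2=1$ and $\chi(\mathcal{O}_X)=3$, hence is already an algebraic stack of finite type. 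For $\mathcal{M}_{1,ns}$ I would take the union of the smooth loci over the (finitely many, by boundedness~\cite{Ko84}) Hilbert polynomials with $K_X^2=1$ and cut out the condition $\pi_1^{et}(X)\neq\{1\}$; because this condition is deformation invariant (as used just before the statement) it defines an open and closed substack, so $\mathcal{M}_{1,ns}$ is again an algebraic stack of finite type.

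Next I would invoke the criterion that an algebraic stack locally of finite type over $k$ is Deligne--Mumford if and only if its diagonal is unramified, equivalently if and only if the inertia stack is unramified over it. Since unramifiedness is equivalent to the vanishing of the sheaf of relative differentials, and the latter is a coherent sheaf that may be checked fibrewise, the diagonal is unramified precisely when, for every object $X$ over a residue field of a point of the stack, the automorphism group scheme $\mathrm{Aut}(X)$ is unramified over that field; and this may be tested after passing to the algebraic closure. For a canonically polarized surface $X$ the group scheme $\mathrm{Aut}(X)$ is finite, since $X$ is of general type and every automorphism fixes the canonical polarization $K_X$; and a finite group scheme over an algebraically closed field is unramified if and only if it is reduced, if and only if it is smooth. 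Thus the Deligne--Mumford property for either stack is equivalent to the smoothness of $\mathrm{Aut}(X)$ for every surface $X$ it parametrizes.

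It then remains only to apply Corollary~\ref{main-corollary}. Every surface $X$ parametrized by $\mathcal{M}_{1,3}$ satisfies $K_X^2=1$ and $\chi(\mathcal{O}_X)=3$, so case~(2) of that corollary shows $\mathrm{Aut}(X)$ is smooth; every surface $X$ parametrized by $\mathcal{M}_{1,ns}$ satisfies $K_X^2=1$ and $\pi_1^{et}(X)\neq\{1\}$, so case~(3) again gives $\mathrm{Aut}(X)$ smooth. Hence in both cases all automorphism group schemes are \'etale, the diagonal is unramified, and both stacks are Deligne--Mumford.

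The genuinely substantial input is Corollary~\ref{main-corollary} itself, which rests on the main theorem; granting it, the argument is essentially formal. The points that require care are therefore the verification that the two conditions cut out honest algebraic substacks---this is exactly where the asserted deformation invariance of $\chi(\mathcal{O}_X)$ and of the nontriviality of $\pi_1^{et}$ enters---and the reduction of the Deligne--Mumford property to a statement about geometric automorphism group schemes, where one must be careful to check unramifiedness of the inertia fibrewise over possibly imperfect residue fields before passing to the algebraic closure, so that the smoothness conclusion of Corollary~\ref{main-corollary}, which is stated over algebraically closed fields, genuinely applies. I expect this last translation, rather than any new geometric input, to be the main obstacle to a fully rigorous write-up.
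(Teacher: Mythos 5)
Your proposal is correct and follows essentially the same route as the paper, which justifies this corollary in a single sentence by combining Corollary~\ref{main-corollary} with the deformation invariance of $\chi(\mathcal{O}_X)$ and of the nontriviality of $\pi_1^{et}$; your write-up merely makes explicit the standard reduction of the Deligne--Mumford property to smoothness of the geometric automorphism group schemes. The only substantive input in both arguments is Corollary~\ref{main-corollary}, so there is nothing further to add.
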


Next I would like to discuss how restrictive and how effective the conditions of Theorem~\ref{main-theorem} are. First of all, there are examples of surfaces of general type with arbitrary large $K^2$ and non reduced automorphism scheme~\cite{La83},~\cite{Li09}. In fact in characteristic 2 there are non uniruled examples as well~\cite{SB96}. Moreover, Theorem~\ref{th1} suggests that nonsmoothness of the automorphism scheme happens for small values of $K^2$ compared to the characteristic of the base field. In particular, in characteristic 2, the class of surfaces with $K^2=1$ must be the most pathological class and is of particular interest to the problem. Moreover, one cannot expect that it will be possible to get results similar to those of Theorem~\ref{main-theorem} for any surface with nonreduced automorphism scheme  because there are examples of canonically polarized surfaces that are not uniruled or simply connected and yet they have nonreduced automorphism scheme~\cite{SB96}. However, due to the lack 
of examples, I do not know if 
surfaces with $K^2\leq 2$ is the maximal class of surfaces that Theorem~\ref{main-theorem} holds. 

Suppose that $X$ is a canonically polarized surface $X$ with $K_X^2\leq 2$. If $K_X^2=2$, then by~\cite{Ek87}, it follows that $0 \leq \chi(\mathcal{O}_X)\leq 4$. Moreover, if $\chi(\mathcal{O}_X) \geq 3$, then by Lemma~\ref{b1}, $\pi_1^{et}(X)=\{1\}$ and hence the condition that $X$ is simply connected in Theorem~\ref{main-theorem}.2 has value only for $\chi(\mathcal{O}_X) =2$.

Suppose that $K_X^2=1$. Then it is well known~\cite{Li09} that  $\mathrm{p}_g(X)\leq 2$, $1 \leq \chi(\mathcal{O}_X) \leq 3$ and $|\pi_1^{et}(X)| \leq 6$. In characteristic zero there are examples for all cases and similar examples are expected to exist in characteristic 2. Hence many surfaces with $K_X^2=1$ are excluded in the theorem and hence by the Corollary~\ref{main-corollary} have smooth automorphism scheme. For 
example, Godeaux surfaces that are quotients of a smooth quintic in $\mathbb{P}^3_k$ by a free action of $\mathbb{Z}/5\mathbb{Z}$, since they are not simply connected. 

At this point I would like to mention that it is hard to find examples of smooth canonically polarized surfaces with low $K^2$ and non smooth automorphism scheme. In fact, I believe that the biggest disadvatage of this paper is the lack of examples of surfaces with norreduced automorphism scheme and low $K^2$ which will show how effective the results of Theorem~\ref{main-theorem} are. N. I. Shepherd-Barron~\cite{SB96} has constructed an example of a smooth canonically polarized surface $X$ with non smooth automorphism scheme and $K_X^2=8$. This is the example with the lowest $K^2$ that I know. Simply connected Godeaux surfaces exist in all characteristics~\cite{LN12} but it is not known if their automorphism scheme is smooth or not. Singular examples are much easier to find. Two such examples are presented in section~\ref{examples}

Based on the previous discussion, I believe it would be an interesting problem to search for examples of canonically polarized surfaces with non smooth automorphism scheme and low $K^2$. In particular, Corollary~\ref{godeaux} motivates the following problem.
\begin{problem}
Are there any simply connected Godeaux surfaces with non smooth automorphism scheme? Equivalently, are there any with non trivial global vector fields? If there are then what is the torsion part of their picard scheme?
\end{problem}
A complete classification of Godeaux surfaces in characteristic 2 would be a very interesting problem too.

The main steps in the proof of Theorem~\ref{main-theorem} are the following. 

In Section~\ref{sec-2} it is shown that $\mathrm{Aut}(X)$ is smooth if and only if $X$ admits a nonzero global vector field $D$ such that either $D^2=0$ or $D^2=D$. Equivalently if $X$ admits a nontrivial $\alpha_2$ or $\mu_2$ action. Moreover, it is shown that if $X$ lifts to characteristic zero this does not happen.

In Section~\ref{sec-3} quotients of a smooth surface $X$ by an $\alpha_2$ or $\mu_2$ action are studied. In particular the singularities of the quotient $Y$ are described. Proposition~\ref{prop4} shows that there is an essential diffference between $\alpha_2$ quotients and $\mu_2$ quotients. In the $\mu_2$ case $Y$ has only canonical singularities of type $A_1$. However, in the $\alpha_2$ case, $Y$ may have even non rational singularities. This makes it necessary to consider the two cases separately.

In Section~\ref{sec-4} it is investigated which smooth canonically polarized surfaces $X$ admit vector fields of multiplicative type. The results are presented in Theorem~\ref{mult-type}. The method used in order to study this problem is the following. Suppose that $X$ has a global vector field $D$ of multiplicative type. Then $D$ induces a $\mu_2$ action on $X$. Let $\pi \colon X \rightarrow Y$ be the quotient and $g \colon Y^{\prime} \rightarrow Y$ its minimal resolution. The results are obtained by considering cases with respect to the Kodaira dimension $k(Y)$ of $Y$ and comparing invariants and the geometry of $X$ and $Y$. The basic idea of this method was first used by Rudakov and Shafarevich~\cite{R-S76} in order to show that a smooth K3 surface has no global vector fields.

In Section~\ref{sec-5} it is investigated which smooth canonically polarized surfaces $X$ admit vector fields of additive type. The results are presented in Theorem~\ref{additive-type}. The general lines of the proof of the theorem are as in the proof of Theorem~\ref{mult-type}. However, there are many complications arising from the extra, compared to the multiplicative case, possible singularities of $Y$ which require different arguments, especially in the cases $k(Y)=0$ and $k(Y)=-\infty$.

Theorem~\ref{main-theorem} is the combination of the results of Theorems~\ref{mult-type} and~\ref{additive-type}. At this point I would like to point out that it would be possible to derive Theorem~\ref{main-theorem} only by using the methods of Theorem~\ref{additive-type}. However, Theorems~\ref{mult-type},~\ref{additive-type} provide detailed information about when a smooth canonically polarized surface $X$ admits a global vector field of either multiplicative or additive type. Moreover, if $\mathrm{Aut}(X)$ is not smooth then it would be interesting to know its group scheme structure, in particular that of its connected component containing the identity. It is a hard problem to determine its structure completely. However, finding its subgroups is easier and gives a lot of information about it. If $\mathrm{Aut}(X)$ is not reduced, then it contains either $\mu_2$ or $\alpha_2$, or both. Theorem~\ref{mult-type} gives conditions under which $\mathrm{Aut}(X)$ does not contain $\mu_2$ and Theorem~\ref{additive-
type} gives conditions under which $\mathrm{Aut}(X)$ does not contain $\alpha_2$.
 
In Section~\ref{examples} an overview of known examples is given. Moreover two examples are given of singular surfaces $X$ and $Y$ with non smooth automorphism scheme and  $K_X^2=1$, $K_Y^2=5$. $X$ has singularities of index 2 and $Y$ has canonical singularities of type $A_n$. The significance of these examples are twofold. First singular surfaces should be studied because they are important in the compactification of the moduli problem. The first example shows that if there are no restrictions on the singularities then $K^2$ can be anything. The second has canonical singularities and is therefore a stable surface, a surface that is in the moduli problem, with low $K^2$. Moreover, $Y$ is smoothable to a smooth canonically polarized surface with smooth automorphism scheme. This shows that the property ``smooth automorphism scheme'' is not deformation invariant and does not produce a proper moduli stack.

Finally I would like to explain the reasons that this study has been restricted to characteristic 2. The main reason is that in characteristic 2, it is possible to control the singularities of the quotient $Y$ of a smooth surface $X$ by a $\mu_2$ or $\alpha_2$ action. The singularities of the quotient $Y$ correspond to the isolated singularities of the vector field $D$ inducing the action. By~\cite{Hi99} the singularities can be resolved by a series of blow ups. This is not possible in higher characteristics. Moreover, in characteristic 2 $X$ is a torsor over a codimension 2 open subset of $Y$. This is not always true in higher characteristics. On the other hand characteristic 2 has its own unique difficulties. For example many pathologies, like the existence of quasi-elliptic fibrations,  appear only in characteristics 2 and 3. Moreover, nonclassical Godeaux surfaces, that play an important role in Theorem~\ref{main-theorem} exist only in characteristics $p \leq 5$~\cite{Li09}. Finally, in the view of 
Theorem~\ref{th1}, the structure of the automorphism scheme should be worse for small characteristics. From this point of view, 2 is the worst case.

\section{Preliminaries.}\label{sec-0}
Let $X$ be a scheme of finite type over a field $k$ of characteristic $p>0$.

$X$ is called a smooth canonically polarized surface if and only if $X$ is a smooth surface and $\omega_X$ is ample.

$Der_k(X)$ denotes the space of global $k$-derivations of $X$ (or equivalently of global vector fields). It is canonically identified with $\mathrm{Hom}_X(\Omega_X,\mathcal{O}_X)$.

A nonzero global vector field  $D$ on $X$ is called of additive or multiplicative type if and only if $D^p=0$ or $D^p=D$, respectively. A prime divisor $Z$ of $X$ is called an integral divisor of $D$ if and only if locally there is a derivation $D^{\prime}$ of $X$ such that $D=fD^{\prime}$, $f \in K(X)$,  $D^{\prime}(I_Z)\subset I_Z$ and $D^{\prime}(\mathcal{O}_X) \not\subset I_Z$ ~\cite{R-S76}. 

Let $\mathcal{F}$ be a coherent sheaf on $X$. By $\mathcal{F}^{[n]}$ we denote the double dual $(\mathcal{F}^{\otimes n})^{\ast\ast}$. 

For any prime number $l\not= p$, the cohomology groups $H_{et}^i(X,\mathbb{Q}_l)$ are independent of $l$, they are finite dimensional of $\mathbb{Q}_l$ and are called the $l$-adic cohomology groups of $X$. The $i$-Betti number $b_i(X)$ of $X$ is defined to be the dimension of $H_{et}^i(X,\mathbb{Q}_l)$. It is well known that $b_i(X)=0$ for any $i>2n$, where $n=\dim X$~\cite{Mi80}. 

The \'etale Euler characteristic of $X$ is defined by \[
\chi_{et}(X)=\sum_{i}(-1)^i\dim_{\mathbb{Q}_l}H^i(X,\mathbb{Q}_l)=\sum_i(-1)^ib_i(X).
\]
If $X$ is a smooth surface then $c_2(X)=\chi_{et}(X)$~\cite{Mi80}. Both the Betti numbers and the \'etale euler characteristic are invariant under \'etale equivalence. In particular, if $f \colon X \rightarrow Y$ is a purely inseparable morphism of varieties, then $f$ induces an equivalence of the \'etale sites of $X$ and $Y$ and hence $b_i(X)=b_i(Y)$ and $\chi_{et}(X)=\chi_{et}(Y)$~\cite{Mi80}.

$X$ is called algebraically simply connected if $\pi_1^{et}(X)=\{1\}$.

We will use the terminology of terminal, canonical, log terminal and log canonical singularities as in~\cite{KM98}. Their definition and basic properties, in particular~\cite[Corollary4.2, Corollary 4.3, Theorem 4.5]{KM98} are independent of the characteristic of the base field and therefore their theory applies in positive characteristic too. The contraction theorems~\cite{Art62},~\cite{Art66} are also independent of the characteristic and will be used frequently in this paper.

Let $P\in X$ be a normal surface singularity and $f \colon Y \rightarrow X$ its minimal resolution. If $P\in X$ is canonical, then $K_Y=f^{\ast}K_X$. By~\cite{KM98} canonical surface singularities are classified according to the Dynkin diagrams of their minimal resolution and they are called accordingly of type $A_n$, $D_n$, $E_6$, $E_7$ and $E_8$. In characteristic zero these are exactly the DuVal singularities and their dynkin diagrams correspond to explicit equations. However in characteristic 2 I am not aware of a classification with respect to local equations. In this paper they will be distinguished according to their dynkin diagrams   

A Godeaux surface is a surface of general type with the lowest possible numerical invariants and it is a classical object of study (at least in characteristic zero). More precisely,
\begin{definition}\cite{Re78}
A numerical Godeaux surface is a minimal surface $X$ of general type such that $K_X^2=1$ and $\chi(\mathcal{O}_X)=1$. 
\end{definition}
Numerical Godeaux surfaces are divided in two disjoint classes. \textit{Classical} and \textit{Nonclassical}. A Godeaux surface $X$ is called classical if and only if the torsion part $\mathrm{Pic}^{\tau}(X)$ of $\mathrm{Pic}(X)$ is reduced, and nonclassical if and only if $\mathrm{Pic}^{\tau}(X)$ is not reduced. Nonclassical are further divided into two disjoint classes. \textit{Singular} if $\mathrm{Pic}^{\tau}(X)$ contains $\mu_p$ and \textit{Supersingular} if $\mathrm{Pic}^{\tau}(X)$ contains $\alpha_p$.

Since all group schemes are reduced in characteristic zero, nonclassical Godeaux surfaces appear only in positive characteristic $p>0$. It is well known that in characteristic zero  $\mathrm{Pic}^{\tau}(X)$ can be $\{1\}$, $\mathbb{Z}/2\mathbb{Z}$,  $\mathbb{Z}/3\mathbb{Z}$,$\mathbb{Z}/4\mathbb{Z}$ and  $\mathbb{Z}/5\mathbb{Z}$~\cite{Re78}. In positive characteristic however, $\mathrm{Pic}^{\tau}(X)$ may have a non reduced part and one gets either singular or supersingular Godeaux surfaces too. Classical Godeaux surfaces have been classified in characteristic zero by Miles Reid~\cite{Re78}, and in characteristic $p=5$ by W. Lang~\cite{La83}. Nonclassical in characteristic 5 have been classified by C. Liedtke~\cite{Li09}. There is no classification yet in characteristic 2 but it is expected that all the characteristic zero cases appear also in characteristic 2 together with cases where $\mathrm{Pic}^{\tau}(X)=\alpha_2$ or $\mu_2$. 

There is a correspondence between the structure of $\mathrm{Pic}^{\tau}(X)$ and the existence of torsors over $X$.  The correspondence is given by the isomorphism $\mathrm{Hom}(G, \mathrm{Pic}^{\tau}(X))\cong H^1_{fl}(X,G^{\ast})$~\cite{Ray70}, where $G$ is any commutative subgroup scheme of $\mathrm{Pic}^{\tau}(X)$ and $G^{\ast}$ is its Cartier dual. According to this, a Godeaux surface is singular if there is a $\mathbb{Z}/2\mathbb{Z}$ torsor and supersingular if there is an $\alpha_2$ torsor over $X$. Equivalently, if the induced map of the Frobehious on $H^1(\mathcal{O}_X)$ is either bijective or zero.

Finally, $\mathbb{F}_p$ denotes the finite field with $p$ elements.
\section{Nonreducedness happens for small p.}\label{sec-1}

The purpose of this section is to show that nonreducedness of the automorphism scheme of a canonically polarized normal surface defined over an algebraically closed field of characteristic $p>0$ is a property that happens for relatively small values of $p$. Moreover, the length of the automorphism schemes of canonically polarized surfaces with a fixed Hilbert polynomial is bounded by a number that depends only on the Hilbert polynomial and not on the characteristic of the base field. In particular I will show the following.

\begin{theorem}\label{th1}
Let $f(x)\in \mathbb{Q}[x]$ be a numerical polynomial. Then there are positive integers $m_0$ and $M_0$ depending only on $f(x)$ such that for any Gorenstein canonically polarized surface $X$ defined over an algebraically closed field $k$ of characteristic $p>0$ and with Hilbert polynomial $f(x)$,
\begin{enumerate} 
\item \[
\mathrm{length}(\mathrm{Aut}(X)) \leq M
\]
\item $\mathrm{Aut}(X)$ is smooth for all $p>m_0$. 
\end{enumerate}
\end{theorem}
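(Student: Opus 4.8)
The plan is to reduce both statements to the study of the action of $\mathrm{Aut}(X)$ on the pluricanonical embedding, and then to exploit boundedness of the family of surfaces with fixed Hilbert polynomial $f(x)$. First I would fix the Hilbert polynomial $f(x)$ and recall that by Matsusaka--Mumford type boundedness results (available for canonically polarized surfaces by~\cite{Ko84}), there is an integer $N$, depending only on $f(x)$, such that $\omega_X^{\otimes N}$ is very ample for every Gorenstein canonically polarized surface $X$ with Hilbert polynomial $f(x)$, regardless of the characteristic. This pluricanonical system gives a closed embedding $X \hookrightarrow \mathbb{P}^M_k$, with $M = f(N)-1$ independent of the characteristic, and since $\omega_X$ is canonical, $\mathrm{Aut}(X)$ acts on $H^0(X,\omega_X^{\otimes N})$ and hence $\mathrm{Aut}(X)$ is identified with a closed subgroup scheme of $\mathrm{PGL}_{M+1}$ (or $\mathrm{GL}$ on the graded section ring). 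This is the uniform linearization that makes the whole family comparable across characteristics.

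For part (1), the bound on the length, I would argue as follows. The tangent space to $\mathrm{Aut}(X)$ at the identity is $\mathrm{Der}_k(X) = H^0(X, T_X)$, and by Serre duality this is $H^2(X, \Omega_X \otimes \omega_X)^\vee$, whose dimension is bounded by a Riemann--Roch/Euler characteristic expression in the Chern numbers $K_X^2$ and $c_2(X)$, which are themselves determined by $f(x)$. More robustly, since $\mathrm{Aut}(X)$ embeds in a fixed $\mathrm{GL}_{M+1}$ and is finite (canonically polarized surfaces have finite automorphism scheme, as $T_X$ has no sections coming from a positive-dimensional group acting on a surface of general type — this uses that $\mathrm{Aut}(X)^{\circ}$ would be an affine or abelian group acting faithfully, contradicting ampleness of $\omega_X$), its length is the length of a finite subgroup scheme of $\mathrm{GL}_{M+1}$ whose dimension of Lie algebra is at most $\dim_k \mathrm{Der}_k(X)$. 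The key point is to convert this into a genuinely uniform bound: I would use that a finite subgroup scheme $G \subseteq \mathrm{GL}_{M+1}$ acting faithfully and linearly, with its reduced part $G_{\mathrm{red}}$ of order bounded (again by a Hurwitz-type bound depending only on $K_X^2$) and its infinitesimal part controlled by $\dim \mathrm{Der}_k(X) \leq C(f)$, has length bounded by a function $M_0$ of these two data. Thus $\mathrm{length}(\mathrm{Aut}(X)) \leq M_0(f)$.

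For part (2), smoothness for large $p$, the mechanism is that nonsmoothness forces the existence of a nonzero global vector field $D \in \mathrm{Der}_k(X)$ (the nonreduced infinitesimal directions of $\mathrm{Aut}(X)$ are exactly $H^0(X,T_X)$), and a global vector field in characteristic $p$ satisfies a $p$-closed structure. The strategy is to bound $\dim_k \mathrm{Der}_k(X)$ above by a constant $C(f)$ independent of $p$ (via the Euler-characteristic computation, being careful that the naive Riemann--Roch terms are characteristic-free once one works with $\chi(T_X)$ and corrects by $h^1$, $h^2$ whose vanishing or boundedness I would need to control), and simultaneously to show that the mere existence of a nonzero $D$ produces additive or multiplicative $p$-th power structure, yielding via the quotient constructions of later sections an $\alpha_p$ or $\mu_p$ action whose fixed-point/ramification geometry is incompatible with the fixed numerical invariants once $p$ exceeds some $m_0(f)$. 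Concretely, the quotient $X \to Y = X/D$ changes $K^2$ and $\chi$ by amounts governed by the integral divisors and isolated zeros of $D$, with multiplicities scaling with $p$; comparing $K_X^2 = p \cdot (\text{something}) + \text{corrections}$ against the fixed value $K_X^2$ read off from $f(x)$ forces $p \leq m_0(f)$.

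The main obstacle I anticipate is making the bound on $\dim_k \mathrm{Der}_k(X)$ genuinely uniform in $p$. The Riemann--Roch estimate controls $\chi(T_X) = -K_X^2 \cdot(\text{const}) + \ldots$ in terms of $f$, but $h^0(T_X)$ also depends on the possibly large and $p$-sensitive groups $h^1(T_X)$ and $h^2(T_X) = h^0(\Omega_X \otimes \omega_X)^\vee$; in positive characteristic there is no Kodaira vanishing to kill these, precisely the pathology flagged in the introduction. The crux is therefore to bound $h^0(T_X)$ directly from the boundedness of the family and the fixed embedding dimension $M$, rather than through cohomology vanishing: since all such $X$ live in a bounded family over $\mathrm{Spec}\,\mathbb{Z}$, the function $p \mapsto \max_X \dim_k \mathrm{Der}_k(X)$ is constructible and bounded, giving $C(f)$ uniformly. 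Pinning down this uniformity, and then extracting the explicit numerical incompatibility that forces $p \leq m_0(f)$, is where the real work lies.
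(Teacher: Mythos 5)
Your starting point (boundedness of the family with fixed Hilbert polynomial, hence a uniform pluricanonical embedding) agrees with the paper, but both steps where you actually extract the bounds have genuine gaps. For part (1), you claim that a finite subgroup scheme $G\subseteq \mathrm{GL}_{M+1}$ whose reduced part has bounded order and whose Lie algebra has bounded dimension must have bounded length. This is false: $\mu_{p^n}\subseteq \mathbb{G}_m\subseteq \mathrm{GL}_{M+1}$ (and likewise $\alpha_{p^n}\subseteq \mathrm{GL}_2$) has trivial reduced part and one-dimensional Lie algebra, yet length $p^n$, which is unbounded even for fixed $p$. So no bound on $\mathrm{length}(\mathrm{Aut}(X))$ can be extracted from $\dim_k\mathrm{Der}_k(X)$ and the order of $\mathrm{Aut}(X)_{\mathrm{red}}$ alone. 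The paper gets uniformity from a different source: it forms the relative automorphism scheme $\Phi\colon \mathrm{Aut}(\mathcal{X}/S)\to S$ over the finite-type-over-$\mathbb{Z}$ base $S$ of the bounding family and proves that $\Phi$ is finite (finite fibers because each $\mathrm{Aut}(X)$ is finite; properness via the valuative criterion, i.e.\ a Matsusaka--Mumford type statement that an automorphism of the generic fiber of a canonically polarized family over a DVR extends to the whole family, using resolution of surface singularities). Finiteness of $\Phi$ over a Noetherian base is what bounds the fiber lengths by an $M$ depending only on $f$. This is the idea missing from your write-up.

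For part (2) you propose a detour through $\alpha_p$ and $\mu_p$ quotients and a numerical incompatibility that is supposed to force $p\leq m_0(f)$; this is not carried out, and it is doubtful it could be along the lines sketched (the quotient machinery of the later sections is developed only for $p=2$; the identity $\mathrm{length}(\mathcal{O}_Z)=K_X\cdot\Delta+\Delta^2+c_2(X)$ does not scale with $p$; and the adjunction term $(p-1)\Delta$ gives no information when $\Delta=0$). More importantly, the detour is unnecessary: once (1) is established, (2) is formal. A finite group scheme over a field of characteristic $p$ has connected component of the identity of length a power of $p$, so if its total length is less than $p$ the connected component is trivial and the group scheme is \'etale, hence smooth; taking $m_0=M$ finishes the proof. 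Recognizing that (2) reduces to (1) in this way would also have removed your concern about bounding $h^0(T_X)$ uniformly in $p$, which is not needed anywhere in the argument.
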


\begin{proof}
Let $\Omega$ be the set of all Gorenstein canonically polarized surfaces with fixed Hilbert polynomial $f(n)$ defined over any field of any characteristic. Then this set is bounded~\cite{Ko84}~\cite{M70}~\cite{M-M64}. This means that there is a flat morphism $f \colon \mathcal{X} \rightarrow S$, where $S$ is of finite type over $\mathbb{Z}$ whose geometric fibers are Gorenstein canonically polarized surfaces and such that for any Gorenstein canonically polarized surface $X$ with Hilbert polynomial $f(n)$ defined over an algebraically closed field $k$, there is a morphism $\mathrm{Spec}k \rightarrow S$ such that $X \cong \mathrm{Spec}k \times_S \mathcal{X}$. Let
\[
\Phi \colon \mathrm{Aut}(\mathcal{X}/S) \rightarrow S
\]
be the induced morphism on relative automorphism schemes. I will show that this map is finite. For this it suffices to show that $\Phi$ is proper with finite fibers. It is well known that for any canonically polarized surface $X$, $\mathrm{Aut}(X)$ is a finite group scheme. Therefore $\Phi$ has finite fibers. Properness of $\Phi$ follows from the valuative criterion of properness. This is equivalent to the following property. Let $R$ be a discrete valuation ring with function field $K$ and residue field $k$ (perhaps of mixed characteristic). Let $X \rightarrow \mathrm{Spec}R$ be a projective flat morphism such that $\omega_{X/R}$ is ample. Let $X_K$ and $X_k$ be the generic and special fibers. Then any automorphism of $X_K$ lifts to an automorphism of $X$ over $\mathrm{Spec}R$. The proof of this statement is identical with the one for characteristic zero~\cite[Proposition 3]{Ko10} and I omit its proof. It essentially depends on the existence of resolutions of singularities which exist also in 
any characteristic in dimension two.

Now since $\Phi$ is a finite morphism the lengths of its fibers are bounded by some number $M$ that depends only on $f(x)$. Hence if $X$ is any Gorenstein canonically polarized surface defined over an algebraically closed field $k$ of characteristic $p>0$, $\mathrm{length}(\mathrm{Aut}(X))\leq M$. Therefore if $p>M$, $\mathrm{Aut}(X)$ is smooth over $k$. This follows from the fact that $\mathrm{Aut}(X)$ is a finite group scheme defined over a field of characteristic $p>0$ and any such group scheme is smooth if $p$ is bigger than its length~\cite{Mu70}~\cite{Mi12}. This concludes the proof of the theorem.
\end{proof}
The previous theorem motivates the following problem.

\begin{problem}
Find effective bounds for the length of the automorphism scheme, or its component containing the identity, of a canonically polarized surface, depending only on its Hilbert polynomial.
\end{problem}

\begin{remarks}
\begin{enumerate}
\item The previous theorem shows that the structure of the automorphism scheme is expected to be more complicated in small characteristics. It also suggests that the characteristic 2 case may be the case where most pathologies appear.
\item The proof of the theorem depends on boundedness of canonically polarized surfaces with a given Hilbert polynomial. At the time of this writing this is not known in higher dimensions.  
\end{enumerate}

\end{remarks}

\section{Nonreducedness of the automorphism scheme, derivations and group scheme actions.}\label{sec-2}
Let $X$ be a canonically polarized $\mathbb{Q}$-Gorenstein surface defined over an algebraically closed field $k$ of characteristic $p>0$. The purpose of this section is essentially  to show that $\mathrm{Aut}(X)$ is not smooth over $k$ if and only if $X$ admits a nontrivial global vector field or equivalently if it admits a nontrivial $\mu_p$ or $\alpha_p$ action. These results are easy and probably known but I include them here for lack of reference and the convenience of the reader.

\begin{proposition}\label{prop1}
Let $X$ be a canonically polarized $\mathbb{Q}$-Gorenstein surface defined over an algebraically closed field $k$ of characteristic $p>0$. Then $\mathrm{Aut}(X)$ is not smooth over $k$ if and only if $X$ admits a nontrivial global vector field of either additive or multiplicative type.
\end{proposition}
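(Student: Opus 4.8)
The plan is to reduce the equivalence to a computation of the tangent space of $\mathrm{Aut}(X)$ at the identity together with the standard smoothness criterion for group schemes, and then to upgrade an arbitrary nonzero global vector field to one of additive or multiplicative type using the restricted Lie algebra structure on $\mathrm{Der}_k(X)$.

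First I would recall two standard facts. Since $X$ is canonically polarized, $\mathrm{Aut}(X)$ is a finite group scheme over $k$, hence zero-dimensional. Second, the deformation theory of automorphisms identifies the tangent space at the identity $T_e\mathrm{Aut}(X)=\ker(\mathrm{Aut}(X)(k[\epsilon])\to\mathrm{Aut}(X)(k))$, where $k[\epsilon]$ are the dual numbers, with $\mathrm{Der}_k(X)=H^0(X,T_X)$: an automorphism of $X\times_k\mathrm{Spec}\,k[\epsilon]$ reducing to the identity is precisely $\mathrm{id}+\epsilon D$ for a global vector field $D$. Because $\mathrm{Aut}(X)$ is of finite type over the perfect field $k$, it is smooth if and only if it is reduced; and for a group scheme reducedness is controlled at the identity, so that (by homogeneity, a regular point at $e$ propagates to every point, and a zero-dimensional reduced scheme over an algebraically closed field is \'etale) the chain ``smooth $\Leftrightarrow$ reduced $\Leftrightarrow T_e=0$'' holds. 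Combining these, $\mathrm{Aut}(X)$ is non-smooth if and only if $\mathrm{Der}_k(X)\neq 0$. This already yields the reverse implication of the proposition for free, and reduces the forward implication to the following: if $X$ carries any nonzero global vector field, then it carries one of additive or multiplicative type.

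For this last step I would exploit that $\mathrm{Der}_k(X)$ is a restricted ($p$-)Lie algebra, i.e. closed under $D\mapsto D^p$. Given $0\neq D\in\mathrm{Der}_k(X)$, let $A\subseteq\mathrm{Der}_k(X)$ be the $k$-span of $\{D,D^p,D^{p^2},\dots\}$. Since $H^0(X,T_X)$ is finite-dimensional, $A$ is a finite-dimensional restricted subalgebra, and it is abelian because the iterated $p$-powers of a single derivation commute; hence the $p$-operation $\phi\colon A\to A$, $\phi(a)=a^p$, is $p$-semilinear. I would then split into two cases. If $\ker\phi\neq 0$, any $0\neq E\in\ker\phi$ satisfies $E^p=0$ and is a vector field of additive type. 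If $\phi$ is injective then, $A$ being finite-dimensional, $\phi$ is bijective; a short argument (for each $a$ the $\phi$-invariant subspace spanned by $a,a^p,a^{p^2},\dots$ is carried onto itself, forcing $a\in\mathrm{span}\{a^p,a^{p^2},\dots\}$) shows every element of $A$ is semisimple, so $A$ is a nonzero torus, and over the algebraically closed field $k$ a nonzero torus contains a toral element $0\neq E$ with $E^p=E$, a vector field of multiplicative type. Either way $X$ admits a nonzero vector field of additive or multiplicative type.

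The routine ingredients are the tangent-space identification and the ``smooth $\Leftrightarrow$ reduced $\Leftrightarrow T_e=0$'' dichotomy for finite group schemes over a perfect field, which I would simply state with references. The genuine content, and the step I expect to be the main obstacle, is the last one: extracting from an arbitrary nonzero derivation a companion that is either $p$-nilpotent of exponent one ($E^p=0$) or toral ($E^p=E$). The delicate points there are the passage from ``$\phi$ bijective'' to ``$A$ is a torus'' and the existence of a toral element; this is exactly where the finite-dimensionality of $H^0(X,T_X)$ and the algebraic closedness (perfectness) of $k$ are essential.
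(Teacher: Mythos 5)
Your proposal is correct and follows essentially the same route as the paper: identify non-smoothness of the finite group scheme $\mathrm{Aut}(X)$ with the non-vanishing of its tangent space $H^0(X,T_X)$ at the identity, then extract an additive or multiplicative vector field from the $p$-semilinear map $D\mapsto D^p$ via the standard nilpotent/bijective (Fitting-type) decomposition. The only difference is that you apply this decomposition to the abelian restricted subalgebra spanned by $D,D^p,D^{p^2},\dots$, where $D\mapsto D^p$ is genuinely additive, whereas the paper applies \cite[Lemma 4.13]{Mi80} directly to all of $\mathrm{Hom}_X(\Omega_X,\mathcal{O}_X)$; your variant is in fact the more careful one on this point.
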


\begin{proof}
It is well known that for any canonically polarized $\mathbb{Q}$-Gorenstein surface, $\mathrm{Aut}(X)$ is finite. Therefore it is not smooth if and only if its tangent space at the identity is not trivial. Now the tangent space of $\mathrm{Aut}(X)$ at the identity is $\mathrm{Hom}_X(\Omega_X,\mathcal{O}_X)$ which is the space of global derivations of $X$. Therefore $\mathrm{Aut}(X)$ is not smooth if and only if $X$ has a nontrivial global vector field. 

Now let \[
\Phi \colon \mathrm{Hom}_X(\Omega_X,\mathcal{O}_X) \rightarrow \mathrm{Hom}_X(\Omega_X,\mathcal{O}_X)
\]
be the map defined by $\Phi(D)=D^p$. This is a $p$-linear map. Therefore by~\cite[Lemma 4.13]{Mi80}, there is a decomposition $\mathrm{Hom}_X(\Omega_X,\mathcal{O}_X)=V_n\oplus V_s$ where $V_n$, $V_s$ are $\Phi$-invariant subspaces such that the restriction of $\Phi$ to $V_n$ is nilpotent and to $V_s$ bijective. If $V_n \not= \emptyset$ then $X$ admits a vector field of additive type and if $V_s\not= \emptyset$ then it admits a vector field of multiplicative type. 
\end{proof}
\begin{corollary}\label{lifts-to-zero}
Let $X$ be a smooth canonically polarized surface which lifts to characteristic zero, or to $W_2(k)$, the ring of 2-Witt vectors. Then $\mathrm{Aut}(X)$ is smooth.
\end{corollary}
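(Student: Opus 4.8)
\emph{Plan.} By Proposition~\ref{prop1}, since $\mathrm{Aut}(X)$ is finite for a canonically polarized surface, it is smooth if and only if its tangent space at the identity vanishes, i.e. if and only if $X$ carries no nonzero global vector field. Thus the entire statement reduces to the single vanishing
\[
H^0(X,T_X)=\mathrm{Hom}_X(\Omega_X,\mathcal{O}_X)=0,
\]
where $T_X=\Omega_X^{\vee}$. The first step is therefore to reformulate this as a cohomological vanishing to which a Kodaira-type theorem applies.

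The second step is Serre duality. Since $X$ is a smooth projective surface and $T_X^{\vee}\cong\Omega^1_X$, duality gives
\[
H^0(X,T_X)^{\vee}\cong H^2(X,T_X^{\vee}\otimes\omega_X)=H^2(X,\Omega^1_X\otimes\omega_X).
\]
So it suffices to prove $H^2(X,\Omega^1_X\otimes\omega_X)=0$. Because $X$ is canonically polarized, $\omega_X$ is ample, and with $p=1$, $q=2$, $\dim X=2$ we have $p+q=3>\dim X$; hence the desired vanishing is exactly an instance of Akizuki--Nakano vanishing for the ample line bundle $\omega_X$.

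The third step supplies this vanishing in the two cases. In characteristic zero, Akizuki--Nakano vanishing is classical, which already shows that a canonically polarized surface in characteristic zero has no global vector fields; the content of the corollary is therefore genuinely about transporting this to positive characteristic. The positive-characteristic input is Deligne--Illusie~\cite{DM69}-style theory: a lift of $X$ to $W_2(k)$ splits the truncated de Rham complex $\tau_{<p}\,F_*\Omega^{\bullet}_{X/k}$, and from this splitting one extracts the Akizuki--Nakano vanishing for the ample bundle $\omega_X$. For the hypothesis ``$X$ lifts to characteristic zero'', one observes that a flat lift over a mixed-characteristic discrete valuation ring $R$ with residue field $k$ restricts, after reduction modulo $p^2$, to a flat length-two Witt lift, which is precisely the datum needed to run the Deligne--Illusie argument; thus both hypotheses feed into the same vanishing and conclude $H^2(X,\Omega^1_X\otimes\omega_X)=0$, whence $H^0(X,T_X)=0$ and $\mathrm{Aut}(X)$ is smooth.

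\textbf{Main obstacle.} The delicate point is entirely Step~3: unlike the characteristic-zero case, the vanishing $H^2(X,\Omega^1_X\otimes\omega_X)=0$ is not formal and depends on the existence of the mod-$p^2$ lift through the Deligne--Illusie decomposition. Here one must be careful about the interaction between $\dim X$ and $p$ governing the truncation $\tau_{<p}$ of the de Rham complex: it is exactly this boundary behaviour that fails to be automatic for surfaces in characteristic $2$, which is why the non-smoothness phenomenon studied in the rest of the paper can occur at all. Consequently the reduction (Steps~1--2) is routine, while the care required lies in invoking the lifting criterion and checking that the length-two lift obtained from each hypothesis is sufficient to produce the required vanishing.
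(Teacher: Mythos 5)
Your argument is correct and is essentially the paper's own proof: both reduce, via Proposition~\ref{prop1}, to showing that $X$ has no nonzero global vector fields, and then invoke Kodaira--Akizuki--Nakano vanishing for the ample bundle $\omega_X$, which holds by Deligne--Illusie since $\dim X = 2 \leq p = 2$ and $X$ lifts to $W_2(k)$; the only cosmetic difference is that you obtain the vanishing as $H^2(X,\Omega^1_X\otimes\omega_X)=0$ via Serre duality, whereas the paper uses the rank-two identity $T_X\cong\Omega^1_X\otimes\omega_X^{-1}$ and the low-degree form $H^0(X,\Omega^1_X\otimes\omega_X^{-1})=0$. One small slip: the lifting-implies-vanishing input is \cite{DI87} (see also \cite{EV92}), not \cite{DM69}.
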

\begin{proof}
Since $X$ lifts to characteristic zero or $W_2(k)$, then Kodaira-Nakano vanishing holds for $X$~\cite{DI87},~\cite{EV92} and hence
\[
H^2(T_X)=H^0(\Omega_X\otimes \omega_X^{-1})=0,
\]
since $\omega_X$ is ample. Hence $X$ has no nontrivial global vector fields and therefore by Proposition~\ref{prop1}, $\mathrm{Aut}(X)$ is smooth.
\end{proof}
This result is a kind of accident since the smoothness of the automorphism scheme is not the consequence of any vanishing theorems but rather the fact that any group scheme is smooth in characteristic zero. Moreover, this result does not hold for singular surfaces as shown by the examples in section 6. 
\begin{proposition}\label{prop2}
Let $X$ be a canonically polarized $\mathbb{Q}$-Gorenstein surface defined over an algebraically closed field $k$ of characteristic $p>0$. Then $\mathrm{Aut}(X)$ is not smooth over $k$ if and only if $X$ admits a nontrivial $\alpha_p$ or $\mu_p$ action.
\end{proposition}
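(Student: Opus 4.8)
The plan is to reduce the statement to Proposition~\ref{prop1}, which already establishes that $\mathrm{Aut}(X)$ is not smooth if and only if $X$ carries a nontrivial global vector field of additive or multiplicative type. Thus it suffices to show that the existence of such a vector field is equivalent to the existence of a nontrivial $\alpha_p$ or $\mu_p$ action on $X$. This is exactly the classical correspondence, due to Cartier and developed in this setting by Rudakov--Shafarevich~\cite{R-S76}, between $p$-closed derivations and actions of the height-one infinitesimal group schemes $\alpha_p$ and $\mu_p$. So the real content is to invoke and apply this correspondence cleanly.

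First I would recall the dictionary between group scheme actions and derivations. An action of a finite infinitesimal group scheme $G$ of height one (such as $\alpha_p$ or $\mu_p$) on $X$ is equivalent to giving a restricted Lie algebra map from $\mathrm{Lie}(G)$ to the global vector fields $Der_k(X)$, compatible with the $p$-operation. Since $\mathrm{Lie}(\alpha_p)$ and $\mathrm{Lie}(\mu_p)$ are both one-dimensional, such an action amounts to choosing a single global derivation $D$; the distinction is recorded by the $p$-operation: for $\mu_p$ the generator of the Lie algebra satisfies $x^{[p]}=x$, whereas for $\alpha_p$ it satisfies $x^{[p]}=0$. Concretely, a nontrivial $\mu_p$-action corresponds to a nonzero $D$ with $D^p=D$ (multiplicative type), and a nontrivial $\alpha_p$-action corresponds to a nonzero $D$ with $D^p=0$ (additive type). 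This is the heart of the equivalence.

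Granting this dictionary, the forward direction is immediate: if $\mathrm{Aut}(X)$ is not smooth, then by Proposition~\ref{prop1} there is a nonzero global vector field of additive or multiplicative type, and feeding it into the correspondence above produces the desired $\alpha_p$ or $\mu_p$ action. For the converse, a nontrivial $\alpha_p$ or $\mu_p$ action yields, via the same dictionary, a nonzero derivation $D$ with $D^p=0$ or $D^p=D$ respectively, hence a nontrivial global vector field, so again by Proposition~\ref{prop1} the scheme $\mathrm{Aut}(X)$ is not smooth. I would be careful to verify that the resulting action is genuinely nontrivial, i.e.\ that a nonzero $D$ really does give a nontrivial action and not the trivial one; this follows because the comodule map determined by $D$ is nontrivial precisely when $D\neq 0$.

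The main obstacle, such as it is, will be the foundational claim that height-one group scheme actions on $X$ correspond to $p$-closed derivations in the restricted sense, and that the two $p$-operation normalizations single out $\mu_p$ versus $\alpha_p$. This is standard but requires citing the correct framework (the equivalence between height-one group schemes and restricted Lie algebras, together with the translation of an action into a locally finite, iterative higher derivation). I would handle this by appealing to~\cite{R-S76}, where precisely this correspondence between derivations of additive and multiplicative type and $\alpha_p$- and $\mu_p$-actions is set up for surfaces, and noting that the decomposition $Der_k(X)=V_n\oplus V_s$ from the proof of Proposition~\ref{prop1} guarantees that a nonzero derivation can always be replaced by one of pure additive or multiplicative type, so that no generality is lost in restricting to these two kinds of actions.
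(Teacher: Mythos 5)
Your proposal is correct and takes essentially the same route as the paper: reduce to Proposition~\ref{prop1}, then translate between vector fields of additive or multiplicative type and nontrivial $\alpha_p$ or $\mu_p$ actions. The only difference is one of packaging: where you invoke the standard height-one group scheme / restricted Lie algebra dictionary from the literature, the paper verifies it directly, defining the coaction by $\Phi(a)=\sum_{k=0}^{p-1}D^k a/k!$ in one direction and, in the other, writing an arbitrary coaction as $\mu^{\ast}(a)=\sum_k \Phi_k(a)\otimes t^k$ and deducing $\Phi_k=D^k/k!$ by induction from coassociativity.
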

\begin{proof}
By Proposition~\ref{prop1}, $\mathrm{Aut}(X)$ is not smooth if and only if $X$ has a nontrivial vector field $D$ of either additive or multiplicative type. Define the map
\[
\Phi \colon \mathcal{O}_X \rightarrow \frac{\mathcal{O}_X[t]}{(t^p)}
\]
by setting 
\[
\Phi(a)=\sum_{k=0}^{p-1}\frac{D^ka}{k!}
\]
If $D$ is of additive type (i.e., $D^p=0$) then $\Phi$ defines an action $\alpha_p$ on $X$ and if it is of multiplicative type (i.e., $D^p=D$) then it defines an action of $\mu_p$ on $X$.

Suppose now that $X$ admits a nontrivial $\alpha_p$ or $\mu_p$ action. I will show that $X$ admits a nontrivial vector field $D$ of either additive or multiplicative type, respectively. I will only do the case when an $\alpha_p$ action exists. The other is similar and is ommited. 

Suppose that $X$ admits a nontrivial $\alpha_p$-action. Let \[
\mu \colon \alpha_P \times X \rightarrow X
\]
be the map that defines the action. Let 
\[
\mu^{\ast} \colon \mathcal{O}_X \rightarrow \mathcal{O}_X \otimes_k \frac{k[t]}{(t^p)}
\]
be the corresponding map on the sheaf of rings level. The additive group $\alpha_p$ is $\mathrm{Spec}\frac{k[t]}{(t^p)}$ as a scheme with group scheme structure given by
\[
m^{\ast}\colon \frac{k[t]}{(t^p)} \rightarrow \frac{k[t]}{(t^p)} \otimes \frac{k[t]}{(t^p)} \]
defined by $m^{\ast}(t)=1\otimes t + t \otimes 1$. Then by the definition of group scheme action, there is a commutative diagram.
\[
\xymatrix{
\mathcal{O}_X \ar[r]^{\mu^{\ast}} \ar[d]^{\mu^{\ast}} & \mathcal{O}_X \otimes_k \frac{k[t]}{(t^p)} \ar[d]^{\mu^{\ast}\otimes 1} \\
\mathcal{O}_X \otimes_k \frac{k[t]}{(t^p)} \ar[r]^{1\otimes m^{\ast}} & \mathcal{O}_X \otimes_k \frac{k[t]}{(t^p)} \otimes_k \frac{k[t]}{(t^p)}\\
}
\]
The map $\mu^{\ast}$ is given by \[
\mu^{\ast}(a)= a\otimes 1 +\Phi_1(a)\otimes t +\sum_{k=2}^{p-1}\Phi_k(a)\otimes t^k,
\] 
where $\Phi_k \colon \mathcal{O}_X \rightarrow \mathcal{O}_X$ are additive maps. The fact that $\mu^{\ast}$ is a sheaf of rings map shows that $\Phi_1$ is a derivation which we call $D$. I will show by induction that $\Phi_k(a)=D^ka/k!$. From this it follows that any $\alpha_p$ action is induced by a global vector field $D$ as in the first part of the proof.

From the commutativity of the previous diagram and the definition of $\mu^{\ast}$ and $m^{\ast}$ it follows that
\begin{gather*}
a \otimes 1 \otimes 1 + Da \otimes t \otimes 1 +\sum_{k=2}^{p-1} \Phi_k(a) \otimes t^k \otimes 1+ Da \otimes 1 \otimes t +D^2a\otimes t \otimes t + \\
\sum_{k=2}^{p-1}\Phi_k(Da)\otimes t^k 
\otimes t + \sum_{k=2}^{p-1}\left( \Phi_k(a)\otimes 1 \otimes t^k +D(\Phi_k(a))\otimes t \otimes t^k +\sum_{s=2}^{p-1}\Phi_s(\Phi_k(a))\otimes t^s \otimes t^k\right) =\\
a\otimes 1 \otimes 1 +Da \otimes t \otimes 1 + Da \otimes 1 \otimes t +\sum_{k=2}^{p-1}\sum_{s=0}^{k}\Phi_k(a) \binom{k}{s}t^s \otimes t^{k-s}
\end{gather*}
Equating the coefficients of $t^{k-1}\otimes t $ on both sides of the equation we get that $k\Phi_k(a)t^{k-1}\otimes t = \Phi_{k-1}(Da)\otimes t^{k-1} \otimes t$. By induction, 
$\Phi_{k-1}=D^{k-1}/(k-1)!$. Therefore $\Phi_k(a)=D^ka/k!$ as claimed.

\end{proof}
From the previous proposition immediately follows that,
\begin{corollary}\label{subgroup-of-aut}
Let $X$ be a canonically polarized $\mathbb{Q}$-Gorenstein surface defined over an algebraically closed field $k$ of characteristic $p>0$. Then $\mathrm{Aut}(X)$ is not smooth if and only if it has a subgroup scheme isomorphic to either $\alpha_p$ or $\mu_p$.
\end{corollary}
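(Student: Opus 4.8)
The plan is to deduce the corollary directly from Proposition~\ref{prop2} by passing between the language of group scheme actions and that of subgroup schemes. The key formal point is that, since $\mathrm{Aut}(X)$ is representable (indeed finite for a canonically polarized $\mathbb{Q}$-Gorenstein surface), an action of a finite group scheme $G$ on $X$ is precisely the datum of a homomorphism of group schemes $\phi \colon G \to \mathrm{Aut}(X)$, and this action is nontrivial exactly when $\phi$ is not the constant homomorphism onto the identity section.

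For the forward implication, suppose $\mathrm{Aut}(X)$ is not smooth. By Proposition~\ref{prop2}, $X$ carries a nontrivial $\alpha_p$ or $\mu_p$ action, hence a nontrivial homomorphism $\phi \colon G \to \mathrm{Aut}(X)$ with $G=\alpha_p$ or $G=\mu_p$. I would then invoke the fact that $\alpha_p$ and $\mu_p$, being finite group schemes of prime order $p$, admit no proper nontrivial subgroup schemes: any closed subgroup scheme has order dividing $p$ (multiplicativity of orders in an exact sequence of finite group schemes over a field), hence order $1$ or $p$. Therefore $\ker\phi$ is either all of $G$, which would force $\phi$ to be trivial and contradicts nontriviality, or else trivial. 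In the latter case $\phi$ is a monomorphism of finite group schemes over a field and hence a closed immersion, so it exhibits $\alpha_p$ (resp. $\mu_p$) as a subgroup scheme of $\mathrm{Aut}(X)$.

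For the converse, suppose $\mathrm{Aut}(X)$ contains a subgroup scheme $H$ isomorphic to $\alpha_p$ or $\mu_p$. The most economical argument is infinitesimal: the closed immersion $H \hookrightarrow \mathrm{Aut}(X)$ induces an injection on Zariski tangent spaces at the identity, and since $T_e(\alpha_p)=T_e(\mu_p)=k\neq 0$ we obtain $T_e\mathrm{Aut}(X)\neq 0$. As $\mathrm{Aut}(X)$ is finite, a nonzero tangent space at the identity forces non-smoothness. Alternatively one may compose the inclusion $H \hookrightarrow \mathrm{Aut}(X)$ with the tautological action of $\mathrm{Aut}(X)$ on $X$ to produce a nontrivial $\alpha_p$ or $\mu_p$ action and appeal once more to Proposition~\ref{prop2}.

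The only ingredient beyond the previous proposition that requires genuine input is the \emph{simplicity} of $\alpha_p$ and $\mu_p$, that is, the absence of proper nontrivial subgroup schemes; this is the step I expect to be the main, if modest, obstacle, and it rests on the principle that the order of a subgroup scheme divides the order of the ambient finite group scheme over a field. Everything else in the argument is formal bookkeeping translating actions into homomorphisms and back.
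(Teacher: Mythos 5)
Your argument is correct and follows the same route as the paper, which simply asserts that the corollary is immediate from Proposition~\ref{prop2}; you have filled in the implicit dictionary between nontrivial $\alpha_p$- or $\mu_p$-actions and nontrivial homomorphisms into the finite group scheme $\mathrm{Aut}(X)$, together with the simplicity of $\alpha_p$ and $\mu_p$ guaranteeing that such a homomorphism is a closed immersion. Both directions, including the tangent-space argument for the converse, are sound.
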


\section{Quotients by $\alpha_p$ or $\mu_p$ actions.}\label{sec-3}
In this section $X$ denotes a scheme of finite type over an algebraically closed field $k$ of characteristic $p>0$ which admits a nontrivial $\alpha_p$ or $\mu_p$ action. As explained in the previous section, such an action is induced  by a global nontrivial vector field $D$ of $X$. Let $\pi \colon X \rightarrow Y$ be the quotient, which exists as an algebraic scheme by~\cite{Mu70}. The purpose of this section is to describe the structure of the map $\pi$ and the singularities of $Y$.

\begin{definition}\cite{Sch07}
\begin{enumerate}
\item The fixed locus of the action of $\alpha_p$ or $\mu_p$ (or of $D$) on $X$ is the closed subscheme of $X$ defined by the ideal sheaf generated by $D(\mathcal{O}_X)$.
\item A point $P \in X$ is called an isolated singularity of $D$ if there is an embeded component $Z$ of the fixed locus of $D$ such that $P \in Z$. The vector field $D$ is said to have only divisorial singularities if the ideal $D(\mathcal{O}_X)$ has no embeded components.
\end{enumerate}
\end{definition}

The next proposition gives some information about the singularities of $Y$. 
\begin{proposition}\label{prop3}
Let $X$ be an integral scheme of finite type over an algebraically closed field of characteristic $p>0$. Suppose $X$ has an $\alpha_p$ or $\mu_p$ action induced by a vector field $D$ of either additive or multiplicative type. Let $\pi \colon X \rightarrow Y$ be the quotient. Then
\begin{enumerate}
\item If $X$ is normal then $Y$ is normal.
\item If $X$ is $S_2$ then $Y$ is $S_2$ as well.
\item If $X$ is smooth then the singularities of $Y$ are exactly the image of the embedded part of the fixed locus of the action.
\item If $X$ is normal and $\mathbb{Q}$-Gorenstein, then $Y$ is also $\mathbb{Q}$-Gorenstein. In particular, let $D$ be a divisor in $Y$ and $\tilde{D}$ be the divisorial part of $\pi^{-1}(D)$. Then if $n\tilde{D}$ is Cartier, $pnD$ is Cartier too.
\end{enumerate}
\end{proposition}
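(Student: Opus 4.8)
The plan is to work locally on $Y$ and exploit that the structure sheaf of the quotient is the sheaf of $D$-invariants: on an affine chart $\mathrm{Spec}\,A\subseteq X$ one has $\mathcal{O}_Y=A^D=\ker(D\colon A\to A)$, and $\pi$ is finite, purely inseparable of degree $p$, hence a universal homeomorphism that preserves dimension and codimension. \textbf{For (1)}, write $K=K(X)$ and $K^D=K(Y)$. If $\alpha\in K^D$ is integral over $A^D$, then it is integral over the larger ring $A$, so $\alpha\in A$ because $X$ is normal; since $D\alpha=0$ we get $\alpha\in A\cap K^D=A^D$. Thus $A^D$ is integrally closed in $K^D$ and $Y$ is normal. \textbf{For (2)}, I would use the characterization of $S_2$ by extension of regular functions across closed subsets of codimension $\ge 2$. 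Since $D$ commutes with restriction to open sets, passing to $D$-invariants commutes with taking sections; so for a closed $W\subseteq Y$ of codimension $\ge 2$, whose preimage $\pi^{-1}(W)$ also has codimension $\ge 2$ in $X$, one has $\Gamma(Y\setminus W,\mathcal{O}_Y)=\ker\bigl(D\colon\Gamma(X\setminus\pi^{-1}(W),\mathcal{O}_X)\to\Gamma(X\setminus\pi^{-1}(W),\mathcal{O}_X)\bigr)$. Because $X$ is $S_2$ the inner sections already extend to $\Gamma(X,\mathcal{O}_X)$, so the invariants extend to $\Gamma(Y,\mathcal{O}_Y)$, and this extension property characterizes $S_2$ for the normal domain $Y$. (In the multiplicative case one may instead note that the eigenspace decomposition $\mathcal{O}_X=\bigoplus_{i\in\mathbb{F}_p}\ker(D-i)$ realizes $\mathcal{O}_Y=\ker D$ as a direct summand of $\pi_\ast\mathcal{O}_X$, and $S_2$ passes to direct summands.)

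\textbf{For (3)} the statement is local over $Y$, and I would split the fixed locus of $D$ into its divisorial and its embedded part. Factoring $D=g\,D_0$, where $\mathrm{div}(g)$ is the divisorial part of the fixed locus and $D_0$ has zero scheme of codimension $\ge 2$, leaves the quotient unchanged since $A^D=A^{D_0}$. Where the action is free, i.e. $D$ is nonvanishing, $\pi$ is an $\alpha_p$- or $\mu_p$-torsor and smoothness of $Y$ follows from fppf descent of smoothness. Along the divisorial part of the fixed locus I would exhibit an explicit local normal form: after straightening $D_0$ one finds $\mathcal{O}_Y=\ker D=\ker D_0\cong k[[x_1^p,x_2,\dots,x_n]]$, which is regular, so the divisorial part contributes no singular points to $Y$.

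The substantive point, and the step I expect to be the main obstacle, is the reverse inclusion: at a point $x$ in the embedded part of the fixed locus, equivalently at an isolated zero of $D_0$, the invariant ring $\ker D_0$ must be shown to be non-regular, so that $\pi(x)\in\mathrm{Sing}(Y)$. This forces a genuine local computation of the invariants of a vector field with an isolated singularity, and is exactly the input that the later propositions refine into a classification of the resulting quotient singularities; it is here that the characteristic-two hypothesis and the resolution of vector-field singularities of~\cite{Hi99} are used. Concretely I would compute the embedding dimension (or the reflexive differentials) of $\mathcal{O}_{Y,\pi(x)}$ and show that it exceeds $\dim Y$.

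\textbf{For (4)} the engine is that $p$-th powers are $D$-invariant, so $\mathcal{O}_X^p\subseteq\mathcal{O}_Y$ and the Frobenius of $X$ factors as $X\xrightarrow{\pi}Y\xrightarrow{\sigma}X^{(p)}$. Writing the divisor as $B$, to avoid a clash with the vector field, and letting $\tilde B$ be the divisorial part of the scheme-theoretic preimage $\pi^{-1}(B)$, suppose $n\tilde B$ is Cartier, cut out locally by $\phi$ with $\mathrm{div}_X(\phi)=n\tilde B$. Then $\phi^p\in K(X)^p\subseteq K(Y)$, and I would compute $\mathrm{div}_Y(\phi^p)$ through the relation $v_{\tilde P}=e_P\,v_P$ for the ramification index $e_P\in\{1,p\}$ of each prime divisor $P$ of $Y$; since $\tilde B$ already carries the factor $e_P$ coming from the preimage, these indices cancel and $\mathrm{div}_Y(\phi^p)=pnB$, so $pnB$ is Cartier. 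Applying this with $B=K_Y$ yields the $\mathbb{Q}$-Gorenstein assertion once one knows that $\widetilde{K_Y}$ is $\mathbb{Q}$-Cartier on $X$; this is automatic when $X$ is smooth, since then every divisor is Cartier, and in general follows by comparing $\widetilde{K_Y}$ with $K_X$, which agree away from the codimension-one fixed locus, through the ramification formula for $\pi$.
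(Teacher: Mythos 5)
Your parts (1), (2) and (4) are correct, but only (1) coincides with the paper's argument (integral over $A^D$ implies integral over $A$, hence in $A$ by normality, hence in $A^D$ since it is killed by $D$ — verbatim the same). For (2) the paper simply cites~\cite{Sch07}; your replacement via the extension of sections across closed subsets of codimension $\geq 2$, using that $\Gamma(V,\mathcal{O}_Y)=\Gamma(\pi^{-1}V,\mathcal{O}_X)^D$ and that $\pi$ is a homeomorphism, is a legitimate self-contained proof (one small caveat: $Y$ is not yet known to be normal in (2), but your characterization of $S_2$ only needs $Y$ integral, which it is as $A^D$ is a subring of a domain; the eigenspace-summand remark in the multiplicative case is also correct). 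For (4) the routes genuinely diverge: the paper invokes Fossum's description $\mathrm{Ker}\bigl(\mathrm{Cl}(B)\to\mathrm{Cl}(A)\bigr)\subset H^1(G,A^{\ast}+K(A)t)$ with $G\cong\mathbb{Z}/p\mathbb{Z}$ to conclude that the kernel is $p$-torsion, whereas you factor the Frobenius through $\pi$, observe $\phi^p\in K(Y)$, and track valuations via the ramification indices $e_P$. Both yield exactly the statement that the kernel of $\mathrm{Cl}(Y)\to\mathrm{Cl}(X)$ is $p$-torsion; yours is more elementary and self-contained, the paper's is shorter given the reference. Neither you nor the paper fully addresses how the ``in particular'' clause yields $\mathbb{Q}$-Gorensteinness of $Y$ when $X$ is merely $\mathbb{Q}$-Gorenstein rather than smooth (one needs $\pi^{\ast}K_Y=K_X-(p-1)\Delta$ to be $\mathbb{Q}$-Cartier), so I do not count that against you.

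In (3) there is a genuine gap, which you yourself flag. You prove one inclusion: off the embedded part of the fixed locus, $\pi$ is flat (a torsor where the action is free, and with a regular ring of invariants along the purely divisorial part), so $Y$ is regular there. But the converse — that at an isolated zero of $D_0$ the invariant ring $\mathcal{O}_{Y,\pi(x)}$ is actually \emph{not} regular — is the substantive content of the statement, and you leave it as a declared obstacle rather than proving it. The paper does not prove it either; it cites~\cite{AA86}, which is precisely where this local computation lives. Note also that the statement holds for every $p$, so neither the characteristic-2 hypothesis nor the resolution results of~\cite{Hi99} are the right tools here; a cleaner route to close the gap is to observe that if $\mathcal{O}_{Y,\pi(x)}$ were regular then the finite map $\pi$ would be flat at $x$ by miracle flatness, hence a torsor near $x$, forcing the action to be free at $x$ and contradicting the vanishing of $D_0$ there.
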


\begin{proof}
Normality of $Y$ is a local property so we may assume that $X$ and $Y$ are affine. Let $X=\mathrm{Spec}A$ and $Y=\mathrm{Spec}B$, where $B=\{a\in A, \; Da=0\} \subset A$. Let $\bar{B} \subset K(B)$ be the integral closure of $B$ in its function field $K(B)$. Let $z=b_1/b_2 \in \bar{B}$. Then since $K(B)\subset K(A)$ and $A$ is normal, $z\in A$. But $Dz=D(b_1/b_2)=(b_2Db_1-b_1Db_2)/b_2^2=0$. Therefore $z\in B$ and hence $B$ is integrally closed.

The fact that if $X$ is $S_2$, so is $Y$ is proved in ~\cite{Sch07} and the statement that if $X$ is smooth then the singularities of $Y$ are exactly the image of the embeded part of the fixed locus of $D$ is in~\cite{AA86}.

Suppose that $X$ is $\mathbb{Q}$-Gorenstein. Let $D$ be a divisor on $Y$. The property that $D$ is $\mathbb{Q}$-Cartier is local so we may assume that $X$ and $Y$ are affine, say $X=\mathrm{Spec}A$ and $Y=\mathrm{Spec}B$. Then $D$ is $\mathbb{Q}$-Cartier if and only if $nD=0$ in $\mathrm{Cl}(B)$, for some $n \in \mathbb{N}$. Consider the natural map \[
\phi\colon \mathrm{Cl} (B) \rightarrow \mathrm{Cl} (A)
\]
Then according to~\cite{Fo73}, \[
\mathrm{Ker}\phi \subset  H^1(G, A^{\ast}+K(A)t)
\]
where $G$ is the additive subgroup of $Der_k(A)$ generated by $D$, $A^{\ast}+K(A)t \subset K(A)[t]/(t^2)$ is the multiplicative subgroup and $G$ acts on it by the usual automorphisms induced by $D$. Then since $k$ has characteristic $p>0$, $G \cong \mathbb{Z}/p\mathbb{Z}$  and therefore $H^1(G, A^{\ast}+K(A)t)$ is $p$-torsion. Therefore $\mathrm{Ker}\phi$ is $p$-torsion as well. Hence if $n\tilde{D}$ is $\mathbb{Q}$-Cartier, then $nD\in \mathrm{Ker}\phi$ and hence $pnD=0$ in $\mathrm{Cl} (B)$. Therefore $pnD$ is Cartier as claimed. 
\end{proof}

Even if $X$ is smooth, the singularities of $Y$ are very hard to describe in general. However, in the case of a $\mu_p$ action or if $p=2$, there is the following result.
\begin{proposition}\cite{Hi99}\label{prop4}
Suppose $X$ is a smooth surface with a nontrivial global vector field $D$ or either additive or multiplicative type. Then
\begin{enumerate}
\item If $D$ is of multiplicative type then the singularities of $Y$ are toric singularities of type $\frac{1}{p}(1,m)$, $m=1,2,\ldots, p-1$.
\item If $p=2$ then $Y$ is Gorenstein. Moreover, if $Y$ has canonical singularities, then $Y$ has singularities of type either $A_1$, $D_{2n}$, $E_7$ or $E_8$.
\item If $p=2$ then the isolated singularities of $D$ can be resolved by successive blow ups of the isolated singular points. In particular, there is a diagram
\[
\xymatrix{
X^{\prime}\ar[r]^f \ar[d]^{\pi^{\prime}} & X \ar[d]^{\pi} \\
Y^{\prime} \ar[r]^g & Y \\
}
\]
where $f$ is successive blow ups of the isolated singularities of $D$, $D$ lifts to a derivation $D^{\prime}$ in $X^{\prime}$ with only divisorial singularities, and $Y^{\prime}$ is the quotient of $X^{\prime}$ by $D^{\prime}$( which is smooth since $D^{\prime}$ has only divisorial singularities). 
Moreover,
\begin{enumerate}
\item Every $f$-exceptional curve is contained in the divisorial part of $D^{\prime}$.
\item If $D$ is of multiplicative type, then $Y^{\prime}$ is the minimal resolution of $Y$. Moreover, the divisorial part of $D$ is smooth, disjoint from the isolated singular points of $D$ and is not an integral curve of $D$.
\end{enumerate}
\end{enumerate}
\end{proposition}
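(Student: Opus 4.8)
The plan is to reduce every assertion to a local analysis near an isolated singularity of $D$, since all the statements are local on $Y$ apart from the final identification of $Y'$ with the minimal resolution, which will follow once the local picture is understood. Throughout I would lean on the ramification formula of Rudakov--Shafarevich: if $D$ is a $p$-closed vector field on a smooth surface whose zero scheme has no embedded points, so that the quotient is smooth by Proposition~\ref{prop3}(3), then $\pi\colon X\to Y$ satisfies
\[
K_X=\pi^{\ast}K_Y+(p-1)(D),
\]
where $(D)$ denotes the divisorial part of the zero scheme of $D$. For $p=2$ this reads $K_X=\pi^{\ast}K_Y+(D)$, and it is the computational backbone of parts (2) and (3). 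Note also that $pnD$ Cartier from Proposition~\ref{prop3}(4) already forces $Y$ to be $\mathbb{Q}$-Gorenstein of index dividing $2$ when $p=2$.

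For part (1) I would use that a vector field of multiplicative type is locally simultaneously diagonalizable: at an isolated singularity $P$ there are local coordinates $x,y$ with $D=\lambda_1 x\partial_x+\lambda_2 y\partial_y$, $\lambda_i\in\mathbb{F}_p$ not both zero. The invariant subring $\{a:Da=0\}$ is then spanned by the monomials $x^iy^j$ with $\lambda_1 i+\lambda_2 j\equiv 0\pmod p$, which is precisely the coordinate ring of the cyclic quotient singularity $\frac{1}{p}(\lambda_1,\lambda_2)$. Normalizing one weight to $1$ yields the asserted type $\frac{1}{p}(1,m)$ with $1\le m\le p-1$.

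For $p=2$ the additive case must be handled directly, since additive fields need not diagonalize. I would first establish that $Y$ is Gorenstein: away from isolated singularities the formula above shows $K_Y$ is Cartier, and near an isolated singularity one computes the invariant ring from the (short) list of local normal forms of a field with $D^2=0$ (resp.\ $D^2=D$) and checks that its completion is a hypersurface singularity (for instance, for $\frac12(1,1)$ one gets $k[[a,b,c]]/(b^2-ac)$), hence Gorenstein. To resolve the isolated singularities I would blow up $P$; a derivation always lifts to a rational field $D'$ on the blow-up $X'$, and removing its divisorial part gives a regular field. The key computation in the chart $x=u,\,y=uv$, where $D=\lambda_1 x\partial_x+\lambda_2 y\partial_y$ becomes $\lambda_1 u\partial_u+u^{-1}(\lambda_2-\lambda_1)uv\,\partial_v$ after clearing denominators, shows the exceptional divisor $\{u=0\}$ is absorbed into $(D')$, which is assertion (3)(a); a multiplicity count shows the isolated part of the singularity strictly drops, giving termination. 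Feeding the ramification formula for $X'\to Y'$ and the blow-up formulas $K_{X'}=f^{\ast}K_X+\sum F_i$ through the square then expresses the discrepancies of $g\colon Y'\to Y$, and matching these against the constraint that a canonical singularity has all discrepancies $\ge 0$ restricts the Dynkin type to $A_1,D_{2n},E_7,E_8$, which is part (2).

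Finally, for (3)(b) in the multiplicative case the diagonal form shows that near each isolated singularity $D$ is primitive with an isolated zero at $\{x=y=0\}$, so the divisorial part $(D)$ passes through no isolated singularity; writing $D=g\tilde D$ along a component $\{g=0\}$ of $(D)$, the primitive field $\tilde D$ is transverse to it, so $(D)$ is smooth and is not an integral curve of $D$. Minimality of $Y'$ follows because the exceptional divisors of a cyclic quotient singularity form a Hirzebruch--Jung chain of rational curves of self-intersection $\le -2$, so no $(-1)$-curve appears and $Y'$ is the minimal resolution. The main obstacle is precisely the additive characteristic-$2$ case: producing the finite list of local normal forms of $p$-closed additive germs and proving termination of the blow-up process with control on the resulting singularity type is the technical heart, and this is exactly the content of~\cite{Hi99} that I would invoke or reproduce.
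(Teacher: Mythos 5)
Your outline and the paper's proof diverge at the two points the paper actually has to argue (the paper cites \cite{Hi99} for part (1) and for the existence of the resolution in part (3), exactly as you do), and at both points your proposed mechanism has a gap. For part (2), the restriction of the canonical singularities to $A_1$, $D_{2n}$, $E_7$, $E_8$ cannot come from ``matching discrepancies against the constraint that a canonical singularity has all discrepancies $\geq 0$'': every canonical surface singularity $A_n$, $D_n$, $E_6$, $E_7$, $E_8$ is crepant over its minimal resolution, so all discrepancies vanish and this comparison distinguishes nothing. The paper's argument is of a different nature: by Proposition~\ref{prop3}(4) the local class group of each singular point of $Y$ is $2$-torsion, and among the Dynkin types only $A_1$, $D_{2n}$, $E_7$, $E_8$ have $2$-torsion local class groups (this kills $A_n$ for $n\geq 2$, $D_{\mathrm{odd}}$ and $E_6$). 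You quote Proposition~\ref{prop3}(4) but only extract ``$\mathbb{Q}$-Gorenstein of index dividing $2$'', which is weaker than what is needed and is anyway subsumed by Gorensteinness.

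The second gap is in your route to Gorensteinness via ``the (short) list of local normal forms of a field with $D^2=0$''. No such finite list exists in the additive case; the isolated singularities of an additive $p$-closed field in characteristic $2$ can be arbitrarily complicated (indeed the paper stresses that the resulting quotient singularities need not even be rational), so a case-by-case check of invariant rings is not available. The paper instead argues structurally: $\pi$ factors as $X\to Y\xrightarrow{\nu} X^{(2)}$ through the geometric Frobenius, and since $X^{(2)}$ is smooth and $Y$ is normal, $\nu$ is a torsor, so $Y$ has hypersurface singularities and is Gorenstein --- no normal forms required. Finally, a smaller point on (3)(b): knowing that the minimal resolution of an $A_1$ point is a single $(-2)$-curve describes $Y^{\min}$, not $Y'$; to identify $Y'$ with $Y^{\min}$ you still must rule out $(-1)$-curves in the $g$-exceptional locus, which the paper does by pulling back along $\pi'$ (if $F$ is $g$-exceptional with image of an $f$-exceptional $E$, then $(\pi')^{\ast}F=2E$ gives $2F^2=4E^2$, so $F^2=-1$ would force $E^2=-1/2$).
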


\begin{proof}
Everything in the statement of the proposition except 2, 3.(a) and 3.(b) was proved by M. Hirokado~\cite{Hi99}.

Suppose that $p=2$. Then $\pi$ factors through the geometric Frobenious $F \colon X \rightarrow X^{(2)}$. In fact there is a commutative diagram
\[
\xymatrix{
     & Y \ar[dr]^{\nu} \\
X \ar[ur]^{\pi}\ar[rr]^F & & X^{(2)}
}
\]
Since $X^{(2)}$ is smooth and $Y$ is normal, then $\nu$ is a torsor over $X^{(2)}$~\cite{Ek87}. In particular, $Y$ has hypersurface singularities and therefore it is Gorenstein.

Suppose that $Y$ has canonical singularities. Then the dynking diagram of any singular point of $Y$ is of type eithet $A_n$, $D_n$, $E_6$, $E_7$ or $E_8$~\cite{KM98}. By Proposition~\ref{prop3}.4, the local Picard groups of the singular points of $Y$ are 2-torsion. Therefore these can be only $A_1$, $D_{2n+1}$, $E_7$ or $E_8$. 

Suppose that $D$ is of multiplicative type. Let $\Delta$ be its divisorial part. Then in suitable local coordinates of a point $P \in X$, $D$ is given by $D=ax\partial /\partial x +by \partial /\partial y$, where $a, b \in \mathbb{F}_p$~\cite{R-S76}. This shows immediately that the divisorial part $\Delta$ of $D$ is smooth, it is disconnected from the isolated singular points of $D$ and is not an integral curve of $D$. Therefore, if $\Delta^{\prime}$ is the image of $\Delta$ in $Y$ with reduced structure, then $\pi^{\ast}\Delta^{\prime}=2\Delta$. Moreover, it is a straightforward calculation to find the lifting $D^{\prime}$ of $D$ on the blow up $X^{\prime}$ of $X$ at $P$ and see that indeed the exceptional curve is contained in the divisorial part of the fixed locus of $D^{\prime}$. 

Next I will show that $Y^{\prime}$ is the minimal resolution of $Y$. The $g$-exceptional curves are exactly the images under $\pi^{\prime}$ of the $f$-exceptional curves. Let $E$ be an $f$-exceptional currve and $F$ its image. Then since $E$ is not an integral curve for $D^{\prime}$, $(\pi^{\prime})^{\ast}F=2E$ and therefore $4E^2=2F^2$. Therefore, if $F^2=-1$, then $E^2=-1/2$, which is impossible. Hence $Y^{\prime}$ is the minimal resolution of $Y$.

It remains to show that if $D$ is of additive type then every $f$-exceptional curve is contained in the divisorial part of $D^{\prime}$. The map $f$ is obtained by successively blowing up the isolated singular points of $D$. In order then to show that every $f$-exceptional curve is contained in the divisorial part of the fixed locus of $D^{\prime}$, it suffices to assume that $f$ is a single blow up and show that $D^{\prime}$ induces the zero derivation on the exceptional curve $E$. 

Suppose $P$ is on the divisorial part of the fixed locus. Then in suitable local coordinates, $D=h(f \partial /\partial x +f \partial / \partial y)$ such that $f,g,h \in m_P$, where $m_P$ is the maximal ideal of $\mathcal{O}_{X,P}$ and $f,g$ have no common factor. Then $Dx =fh \in m_P^2$ and $Dy=hg\in m_P^2$. Let $E$ be the $f$-exceptional curve. Then $E= \mathrm{Proj} R$, where 
\[
R= \oplus_{d\geq 0} m_P^d/m_p^{d+1}
\]
and $D$ induces a graded derivation of $R$. But since $D(m_P) \subset m_P^2$, it follows that in fact the induced derivation is the zero derivation.

Suppose now that $P$ is an isolated singular point of $D$ that does not belong on the divisorial part of the fixed locus. Then again in suitable local coordinates, $D=f \partial /\partial x +g \partial / \partial y$ such that $f,g\in m_P$ have no common factor. I will show that $f, g \in m_P^2$, and hence $D(\mathcal{O}_X)\subset m_p^2$. Indeed, since we are in characteristic 2, $\partial^2/\partial x^2=\partial^2/\partial y^2 =0$. Then an easy calculation shows that
\[
D^2=\left( f\frac{\partial f}{\partial x}+g\frac{\partial f}{\partial y}\right) \frac{\partial}{\partial x} +\left( f\frac{\partial g}{\partial x}+g\frac{\partial g}{\partial y}\right) \frac{\partial}{\partial y}.
\]
Now the relation $D^2=0$ implies that 
\begin{eqnarray*}
f\frac{\partial f}{\partial x}=g\frac{\partial f}{\partial y} & \text{and} & f\frac{\partial g}{\partial x}=g\frac{\partial g}{\partial y}.
\end{eqnarray*}

Suppose that at least one of $f,g$ is not in $m_P^2$. Suppose $f \in m_P-m_P^2$. Then $\frac{\partial f}{\partial x}\not= 0$ and $\frac{\partial f}{\partial y}\not= 0$ because if $\frac{\partial f}{\partial x}=\frac{\partial f}{\partial y}= 0$, then $f(x,y)=f_1(x^2,y^2)$ and hence $f\in m_P^2$. Now considering that $f$ and $g$ have no common factor, it follows that there is $\phi \in \mathcal{O}_X$ such that 
\begin{eqnarray*} 
\frac{\partial f}{\partial x} = g \phi & \text{and} & \frac{\partial f}{\partial y} = f \phi. \\
\end{eqnarray*}
Now since $f \in m_P-m_P^2$, it follows that
\[
f(x,y)=ax+by +f_{\geq 2}(x,y),
\]
where $f_{\geq 2}(x,y)\in m_P^2$, $a,b, \in k$, not both zero. Then either $\partial f/\partial x \in \mathcal{O}_X^{\ast}$ or $\partial f/\partial y \in \mathcal{O}_X^{\ast}$. But then either $g \in \mathcal{O}_X^{\ast}$ or $f \in \mathcal{O}_X^{\ast}$, which is impossible. Hence $D(m_P)\subset m_P^2$. Now argueing exactly as in the case when the singular point $P$ is on the divisorial part of the fixed locus of $D$ we get that the lifting $D^{\prime}$ of $D$ on $X^{\prime}$ restricts to zero on the exceptional curve.

\end{proof}

\begin{remarks}
If $D$ is of additive type it might happen that, unlike in the multiplicative case, the divisorial part of it is an integral divisor of $D$. For example, let $D=x^2\partial /\partial x +xy \partial /\partial y$. It is easy to see that in characteristic 2, $D^2=0$, its divisorial part is given by $x=0$ and it is an integral curve of $D$. 

The possibility that the divisorial part of $D$ is an integral divisor of $D$ suggests that in the additive case, $Y^{\prime}$ may not be the minimal resolution of $Y$. However, this does not happen if $D$ is of multiplicative type and this is one of the reasons why $\mu_p$ quotients are easier to study.

The previous proposition suggests that, despite the pathologies that the characteristic 2 case has, it has the advantage over other characteristics that the singularities of the quotients can be studied with the help of the commutative diagram in the second statement of the previous proposition.
\end{remarks}

The next proposition describes the structure of the map $\pi$ and shows that over a codimension 2 open subset of $Y$ it is the normalization of an $\alpha_p$ or $\mu_p$ torsor over $Y$.

\begin{proposition}\cite{Tz14}\label{structure-of-pi}
Let $X$ be a normal Cohen Macauley integral scheme of finite type over an algebraically closed field $k$ of characteristic $p>0$ with a $\mu_p$ or $\alpha_p$ action. Let $\pi \colon X \rightarrow Y$ be the quotient. Then there exists a factorization \[
\xymatrix{
X \ar[dr]^{\pi} \ar[rr]^{\phi} & & Z\ar[dl]^{\delta} \\
& Y & \\
}
\]
such that;
\begin{enumerate}
\item Suppose that $X$ admits a $\mu_p$ action. Then there exists a rank 1 reflexive sheaf $L$ on $Y$ and a section $s \in H^0(L^{[p]})$ such that 
$Z=\mathrm{Spec}_Y \left( \oplus_{i=1}^{p-1} L^{[-i]}\right)$ is the $p$-cyclic cover defined by $L$ and $s$ and $X$ is the normalization of $Z$.  Moreover,
\[
\omega_Z=\left(\delta^{\ast}(\omega_Y \otimes L^{p-1})\right)^{[1]}.
\]
\item Suppose that $X$ admits a $\alpha_p$ action. Then there exists a short exact sequence of reflexive sheaves on $Y$
\[
0 \rightarrow \mathcal{O}_Y\stackrel{i}{\rightarrow} E \rightarrow L \rightarrow 0
\]
where $E$ and $L$ have ranks 2 and 1 respectively, a $p$-linear map $\sigma \colon E \rightarrow \mathcal{O}_Y$ and a section $s \in \mathrm{Hom}_Y(L,\mathcal{O}_Y)$, such that $Z=\mathrm{Spec}_YR(E,\sigma)$, where $R(E,\sigma)=S(E)/I$, and $I \subset S(E)$ is the ideal generated by $a-i(a)$, for all $a \in \mathcal{O}_Y$, and $\mathrm{Ker}\Phi$, where $\Phi \colon S(E) \rightarrow \mathcal{O}_Y$ is the map induced on $S(E)$ by $\sigma$. $X$ is the normalization of $Z$ and 
\[
\omega_Z=\left(\delta^{\ast}(\omega_Y \otimes L^{1-p})\right)^{[1]}.
\]
\end{enumerate}
If $p=2$ then in all of the above cases, $X=Z$ even if $X$ is not normal. In particular, $X$ is a torsor in codimension 2. 
\end{proposition}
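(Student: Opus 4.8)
The plan is to reduce everything to a codimension-$2$ open set and then build the cover $Z$ out of the graded or filtered structure of $\pi_*\mathcal{O}_X$, treating the two group schemes separately. Write $G=\mu_p$ or $\alpha_p$. Since $X$ and $Y$ are $S_2$ (Proposition~\ref{prop3}) and all the sheaves appearing in the statement are reflexive, both the canonical-sheaf formulas and the torsor assertion may be checked after restriction to the open locus $U\subseteq Y$ over which the $G$-action is free; by the description of the fixed locus, the complement of $U$ has codimension $\ge 2$. On $U$ the map $\pi$ is a $G$-torsor, hence finite flat of degree $p$, and $\pi_*\mathcal{O}_X|_U$ is locally free of rank $p$. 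I would first construct the $\mathcal{O}_Y$-algebra $\delta_*\mathcal{O}_Z$ on $U$ and then define $Z$ globally by taking reflexive hulls, so that $\delta\colon Z\to Y$ is finite and flat in codimension $1$ and $X$, being normal with the same function field and agreeing with $Z$ over $U$, is the normalization of $Z$.

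For the multiplicative case I would use that a $\mu_p$-action is the same datum as a $\mathbb{Z}/p\mathbb{Z}$-grading $\pi_*\mathcal{O}_X=\bigoplus_{i=0}^{p-1}\mathcal{A}_i$ with $\mathcal{A}_0=\mathcal{O}_Y$ and $\mathcal{A}_i\cdot\mathcal{A}_j\subseteq\mathcal{A}_{i+j}$ (indices mod $p$). Setting $L$ to be the reflexive hull of the weight-$(-1)$ piece (the sign chosen to match the statement), the multiplication maps identify $\mathcal{A}_i$ with $L^{[-i]}$ over $U$, and the map $\mathcal{A}_1^{\otimes p}\to\mathcal{A}_0=\mathcal{O}_Y$ dualizes to the section $s\in H^0(L^{[p]})$; this exhibits $Z=\mathrm{Spec}_Y(\bigoplus_{i=0}^{p-1}L^{[-i]})$ as the $p$-cyclic cover attached to $(L,s)$. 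The canonical sheaf then follows from relative duality for the finite map $\delta$: since $\delta_*\mathcal{O}_Z=\bigoplus_i L^{[-i]}$, the sheaf $\mathcal{H}om_Y(\delta_*\mathcal{O}_Z,\mathcal{O}_Y)$ singles out $\delta^*L^{p-1}$ as $\omega_{Z/Y}$, whence $\omega_Z=(\delta^*(\omega_Y\otimes L^{p-1}))^{[1]}$ after reflexivizing.

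The additive case is the main obstacle and needs the analogous, but more delicate, additive package. Here the $\alpha_p$-action is a derivation $D$ with $D^p=0$ (Proposition~\ref{prop2}), and instead of a grading one has only the filtration $\mathcal{O}_Y=\ker D\subseteq\ker D^2\subseteq\cdots\subseteq\pi_*\mathcal{O}_X$. I would extract the extension $0\to\mathcal{O}_Y\to E\to L\to 0$ from the degree-$\le 1$ part of this filtration (locally $E=\mathcal{O}_Y\cdot 1\oplus\mathcal{O}_Y\cdot z$ with $z^p=a\in\mathcal{O}_Y$), encode the relation $z^p=a$ coming from $D^p=0$ in the $p$-linear map $\sigma\colon E\to\mathcal{O}_Y$, and record the divisorial part of $D$ in the section $s\in\mathrm{Hom}_Y(L,\mathcal{O}_Y)$. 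The algebra $R(E,\sigma)=S(E)/I$ is the invariant presentation of this additive degree-$p$ cover: the relations $a-i(a)$ normalize the embedded $\mathcal{O}_Y$ to scalars, and $\ker\Phi$ imposes the $p$-th power equation $z^p=\sigma(z)$. The same duality computation now applies to $\delta_*\mathcal{O}_Z=\bigoplus_{i=0}^{p-1}L^{[i]}$ (positive powers, in contrast to the multiplicative case), whose top summand dualizes to $L^{1-p}$ and yields $\omega_Z=(\delta^*(\omega_Y\otimes L^{1-p}))^{[1]}$. The real work lies in verifying that $R(E,\sigma)$ is flat of degree $p$ in codimension $1$ with normalization $X$, since, unlike the graded case, there is no canonical splitting of the extension and the $p$-linear datum $\sigma$ must be tracked explicitly.

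Finally, for $p=2$ the cover collapses. A degree-$2$ purely inseparable cover is cut out by a single equation (a Kummer relation $z^2=u$ in the $\mu_2$ case and an additive relation $z^2=a$ in the $\alpha_2$ case), so $Z$ is already a hypersurface over $Y$ with the same function field as $X$. Comparing the two over $U$ and using that $\pi$ factors through the Frobenius (as in the proof of Proposition~\ref{prop4}) shows $Z=X$, an identification that does not require the normality of $X$. Since $Z$ is by construction a $G$-torsor over $U$, whose complement has codimension $\ge 2$, the equality $X=Z$ shows that $X$ is a torsor in codimension $2$.
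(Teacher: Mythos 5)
The paper offers no proof to compare against: the proposition is quoted verbatim from [Tz14], which is listed as ``in preparation,'' so your attempt can only be assessed on its own merits. Your overall architecture --- read off a $\mathbb{Z}/p\mathbb{Z}$-grading of $\pi_{\ast}\mathcal{O}_X$ in the $\mu_p$ case, the filtration $\mathcal{O}_Y=\ker D\subseteq \ker D^2\subseteq\cdots$ in the $\alpha_p$ case, present $Z$ from this data, get $\omega_Z$ from duality for the finite map $\delta$, and observe the collapse at $p=2$ --- is sensible and is very plausibly close to the intended argument. But the foundation has a genuine error: you assert that the locus $U$ where the action is free has complement of codimension $\geq 2$ ``by the description of the fixed locus.'' This is false whenever the fixed locus of $D$ has a nonzero divisorial part $\Delta$, which is the typical situation in this paper (Proposition~\ref{prop4} and Lemma~\ref{ex-seq-1} track $\Delta$ precisely, and in the proofs of Theorems~\ref{mult-type} and~\ref{additive-type} the case $\Delta=0$ is the exceptional one). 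Concretely, take $p=2$, $A=k[x,y]$, $D=x\,\partial/\partial y$, so $D^2=0$, $B=\ker D=k[x,y^2]$: then $X\times_Y X\cong \mathrm{Spec}\,A[w]/(w^2)$ maps isomorphically onto $\alpha_2\times X$ only where $x$ is invertible, so $\pi$ fails to be an $\alpha_2$-torsor along the codimension-one locus $x=0$.

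Everything downstream inherits this. A reflexive sheaf on a normal variety is determined by its restriction to an open set only when the complement has codimension $\geq 2$, so ``construct on $U$ and take reflexive hulls'' does not determine $L$, $E$, $s$, $\sigma$, or $Z$ along $\Delta$ --- and it is exactly along (the image of) $\Delta$ that $s$ and $\sigma$ carry their content, since $s$ vanishes there; restricting to the free locus discards precisely the data the proposition encodes. The repair is to work over the complement $V$ of the images of the \emph{isolated} fixed points together with $\mathrm{Sing}\,Y$ (genuinely of codimension $\geq 2$), where $\pi$ is finite flat of degree $p$ --- flatness from $X$ Cohen--Macaulay and $Y$ regular in codimension one, not from any torsor property --- and to check that the graded pieces $\mathcal{A}_i$, resp.\ the filtration steps $\ker D^{i+1}$, stay invertible/extension data on $V$, with the degeneration of the multiplication maps $\mathcal{A}_1^{\otimes i}\to\mathcal{A}_i$ along $\Delta$ recorded by $s$. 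Relatedly, your last paragraph repeats the mistake: the ``torsor in codimension 2'' assertion (which the paper attributes to Ekedahl for $p=2$) is a torsor under the order-$p$ group scheme attached to $L$ --- the $L$-twisted form with relation $z^p=0$ acting by $z\mapsto z+z'$, free wherever $L$ is invertible --- not under $\alpha_2$ or $\mu_2$ itself, whose action is not free along $\Delta$. Finally, your deduction of $X=Z$ at $p=2$ from equality of function fields over $U$ is too weak: two finite covers agreeing over an open set are equal only once both pushforwards are $S_2$ and the agreement holds outside codimension $2$; it is the Cohen--Macaulay hypothesis on $X$ (making $\pi_{\ast}\mathcal{O}_X$ reflexive, e.g.\ $\mathcal{A}_1$ a direct summand in the $\mu_2$ case, $E=\ker D^2=\pi_{\ast}\mathcal{O}_X$ in the $\alpha_2$ case) that does the work the statement's ``even if $X$ is not normal'' alludes to, and your sketch never invokes it.
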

The case of interest in this paper is the case when $p=2$. In this case the quotient map $\pi \colon X \rightarrow Y$ is a torsor in codimension 2 and the adjunction formulas stated in the theorem hold for $X$. The fact that in the characteristic 2 case if $X$ is normal then $\pi$ is a torsor over a codimension 2 subset of $Y$ was shown by T. Ekedhal~\cite{Ek86}.   
However the previous theorem applies also to non normal varieties and moreover it gives more specific information about $\alpha_p$ and $\mu_p$ quotients.

The next lemma and proposition relate the size of the singular locus of $Y$ with certain numerical invariants of $X$ in the case when $X$ is a smooth surface.

\begin{lemma}\label{ex-seq-1}
Let $D$ be a global vector field on a smooth surface $X$ defined over an algebraically closed field $k$ of characteristic $p>0$. Let $I_Z $ be the ideal sheaf of the embeded part $Z$ or the fixed locus of $D$ and let $\Delta$ be its divisorial part. Then 
\begin{enumerate}
\item There exists an exact sequence
\[
0 \rightarrow \mathcal{O}_X(\Delta) \rightarrow T_X \rightarrow L \otimes I_Z \rightarrow 0,
\]
where $L$ is an invertible sheaf on $X$.
\item Let $ P \in X$ be an isolated fixed point of $D$. Then locally in the \'etale topology  $\mathcal{O}_X=k[x,y]$,  $D=h(f\partial/\partial x +g \partial/\partial y)$, where $h,f,g \in k[x,y]$ and $f,g$ are relatively prime. Moreover, \[
\mathcal{O}_Z=\frac{\mathcal{O}_X}{(f,g)}.
\] 
In particular, if $D$ is of multiplicative type then $Z$ is reduced and its length is equal to the number of isolated fixed points of $D$.
\end{enumerate}
\end{lemma}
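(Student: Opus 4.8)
The plan is to produce the exact sequence in part (1) directly from the definition of the fixed locus, and then to analyze the local structure at an isolated fixed point to obtain part (2). The starting point is that the vector field $D$ is a global section of $T_X=\mathcal{H}om_X(\Omega_X,\mathcal{O}_X)$, i.e.\ a map $D\colon \Omega_X \to \mathcal{O}_X$, or dually a section of $T_X$. Since $X$ is a smooth surface, $T_X$ is locally free of rank $2$, so $D$ determines a map $\mathcal{O}_X \to T_X$, $1\mapsto D$. The image of the dual map $T_X^\vee=\Omega_X \to \mathcal{O}_X$ is, by the definition in Section~\ref{sec-0}, precisely the ideal $D(\mathcal{O}_X)$ cutting out the fixed locus. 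The divisorial part $\Delta$ of this fixed locus is the largest divisor through which $D$ factors: writing $D = h\,\tilde D$ locally where $h$ cuts out $\Delta$ and $\tilde D$ has isolated zeros, the section $D$ of $T_X$ vanishes along $\Delta$ with the residual zero locus being the finite scheme $Z$.

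First I would set up the exact sequence. The key observation is that $D\in H^0(T_X)$, viewed as a nowhere-dense section, gives an injection $\mathcal{O}_X(\Delta)\hookrightarrow T_X$ whose saturation is exactly the divisorial part: locally $D$ factors as $h$ times a vector field $\tilde D$ with isolated zeros, and $h$ generates $\mathcal{O}_X(-\Delta)$, so twisting gives a nowhere-vanishing-in-codimension-one map $\mathcal{O}_X(\Delta)\to T_X$. The cokernel is a rank-one sheaf; since $X$ is a smooth surface and hence $T_X$ is locally free, the cokernel is torsion-free away from the finite set $Z$ and is of the form $L\otimes I_Z$ for an invertible sheaf $L$, where the twist by $I_Z$ records the residual isolated zeros of $\tilde D$. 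One identifies $L$ by taking determinants: from $0\to \mathcal{O}_X(\Delta)\to T_X\to L\otimes I_Z\to 0$ one gets $\det T_X = \omega_X^{-1} = \mathcal{O}_X(\Delta)\otimes L$, so $L=\omega_X^{-1}(-\Delta)$, though the exact identification of $L$ is not required by the statement.

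For part (2), I would work locally in the \'etale topology at an isolated fixed point $P$, where $\mathcal{O}_X=k[x,y]$ (or its henselization) and $T_X$ is free on $\partial/\partial x,\partial/\partial y$. Write $D=h(f\,\partial/\partial x + g\,\partial/\partial y)$ with $f,g$ relatively prime; this is exactly the local factorization into the divisorial factor $h$ and the vector field $\tilde D=f\partial/\partial x+g\partial/\partial y$ with isolated zeros. Since $Z$ is the embedded (residual, finite) part of the fixed locus, its ideal is generated by the components of $\tilde D$, namely $\mathcal{O}_Z=\mathcal{O}_X/(f,g)$, after removing the divisorial factor $h$. The final assertion, that for $D$ of multiplicative type $Z$ is reduced with length equal to the number of isolated fixed points, follows from the normal form for multiplicative vector fields quoted from~\cite{R-S76}: in suitable local coordinates $D=ax\,\partial/\partial x+by\,\partial/\partial y$ with $a,b\in\mathbb{F}_p$, and when $P$ is an isolated zero one has $(a,b)$ with $ab\neq 0$, so $(f,g)=(x,y)$ is the maximal ideal, giving a reduced point of length one at each isolated fixed point.

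The main obstacle I anticipate is the precise bookkeeping in part (1): verifying that the cokernel of $\mathcal{O}_X(\Delta)\hookrightarrow T_X$ is genuinely of the form $L\otimes I_Z$ with $Z$ the \emph{embedded} part, rather than merely torsion-free or reflexive. This requires checking that, after factoring out the divisorial part $\Delta$, the residual scheme cut out by the components of $\tilde D$ is exactly the finite scheme $Z$, and that the twisting sheaf $L$ is invertible — which in turn uses smoothness of $X$ (so $T_X$ is locally free and the determinant/Hilbert--Burch type argument applies to the codimension-two ideal $(f,g)$). The multiplicative-type statement is comparatively easy once the local normal form of~\cite{R-S76} is invoked.
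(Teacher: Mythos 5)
Your proposal is correct and follows essentially the same route as the paper: an injection $\mathcal{O}_X(\Delta)\hookrightarrow T_X$ obtained by factoring out the divisorial part of $D$, identification of the rank-one torsion-free cokernel with $L\otimes I_Z$ (the paper phrases this via $\mathcal{F}\cong\mathcal{F}^{\ast\ast}\otimes I_Q$ and then checks locally that the cokernel ideal is $(f,g)$), and the Rudakov--Shafarevich normal form $D=ax\,\partial/\partial x+by\,\partial/\partial y$ for the multiplicative case. The only cosmetic difference is that the paper cites Ekedahl for the initial exact sequence where you construct it directly.
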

\begin{proof}
The vector field $D$ induces a short exact sequence
\[
0 \rightarrow \mathcal{O}_X(\Delta) \rightarrow T_X \rightarrow \mathcal{F} \rightarrow 0, \label{ex-seq-2}
\]
where $\mathcal{F}$ is torsion free and moreover the singular locus of the quotient $Y$ of $X$ by the action induced by $D$ is the image of the subset of $X$ where $\mathcal{F}$ is not free~\cite{Ek87}. Let $Q=\mathcal{F}^{\ast\ast}/\mathcal{F}$. Then $Q$ has finite support and its support is exactly the isolated fixed points of $D$ since by Proposition~\ref{prop3} the singular locus of $Y$ is the set theoretic image of the isolated fixed points of $D$. Let $I_Q$ be the ideal sheaf of $Q$ in $X$. Tensoring the exact sequence
\[
0 \rightarrow I_Q \rightarrow \mathcal{O}_X \rightarrow Q \rightarrow 0 
\]
with $\mathcal{F}^{\ast\ast}$ we get the exact sequence
\[
0 \rightarrow \mathcal{F}^{\ast\ast}\otimes I_Q \rightarrow \mathcal{F}^{\ast\ast} \rightarrow Q \rightarrow 0.
\]
Therefore \[
\mathcal{F}=\mathcal{F}^{\ast\ast}\otimes I_Q.
\]
Since $X$ is smooth, $\mathcal{F}^{\ast\ast}$ is invertible and then the first part of the proposition follows.

Next we will show the second part of the lemma. It will moreover imply that the scheme structure of $Q$ is the same as the scheme structure of the embeded part and therefore they are the same as schemes and not only as sets.

Let $P\in X$ be an isolated singularity of $X$. Then locally in the \'etale topology $\mathcal{O}_X=k[x,y]$. Hence there are $f,g,h\in k[x,y]$ with $f,g$ relatively prime such that 
$D=h(f\partial/\partial x +g \partial/\partial y)$. Then the embeded part of $D$ is given by the ideal $(f,g)$ and the divisorial by $(h)$. Then map $\mathcal{O}_X(\Delta) \rightarrow T_X$ is given by
\[
\Phi \colon \mathcal{O}_X \rightarrow \mathcal{O}_X\frac{\partial}{\partial x} \oplus \mathcal{O}_X \frac{\partial}{\partial y}
\]
defined by $\Phi(1)=f\partial /\partial x +g \partial/ \partial y$. It is now easy to see that the map 
\[
\Psi \colon \mathcal{O}_X\frac{\partial}{\partial x} \oplus \mathcal{O}_X \frac{\partial}{\partial y} \rightarrow (f,g)
\]
given by $\Psi(F\partial/\partial x +G \partial /\partial y)= Gf-Fg$ induces an isomorphism between the cokernel $\mathrm{CoKer} (\Phi)$ and the ideal $(f,g)$. Therefore $I_Z= I_Q$. 

Finally, suppose that $D$ is of multiplicative type, i.e., $D^p=D$. Then by~\cite{R-S76}, in suitable choice of the local parameters $x$ and $y$, $D=a x\partial /\partial x + b y \partial /\partial y$, where $a,b \in \mathbb{F}_p$, the finite field of order $p$. Therefore at an isolated singular point of $D$, $I_Z=(x,y)$. Therefore if $D$ is of multiplicative type $Z$ is reduced and its length is equal to the number of isolated singular points of $D$. 
\end{proof}
\begin{remark}
The exact sequence in~\ref{ex-seq-1}.1 is not new. However I am not aware of any reference of it and also an explicit description of the relation between $Z$ and the embeded part of the fixed locus of $D$. This is the reason that I have included its proof here. 
\end{remark}
\begin{remark}
If $D$ is not of multiplicative type then the embeded part $Z$ of the fixed locus of $D$ may be nonreduced and its length strictly bigger than the number of isolated singular point of $D$, and hence of the number of singular points of the quotient $Y$. For example, let $A=k[x,y]$ and $D=x^2\partial/\partial x +y^2 \partial /\partial y$. If $p=2$ then $D^2=0$. $D$ has exactly one isolated singular point but the embeded part of the fixed locus is given by the ideal $(x^2,y^2)$ and therefore has length $4$.  
\end{remark}
\begin{proposition}\label{size-of-sing}
Let $X$ be a smooth surface defined over an algebraically closed field $k$ of characteristic $p>0$ . Let $D$ be a nontrivial global vector field on $X$ and let $I_Z $ be the ideal sheaf of the embeded part $Z$ or the fixed locus of $D$ and let $\Delta$ be its divisorial part. Then
\[
\mathrm{length}(\mathcal{O}_Z)=K_X \cdot \Delta +\Delta^2 +c_2(X).
\] 
Moreover, if $D$ is of multiplicative type ,i.e.,  $D^p=D$, $Z$ is reduced and then the number of isolated fixed points of $D$ is $K_X \cdot \Delta +\Delta^2 +c_2(X)$.
\end{proposition}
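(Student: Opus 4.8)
The plan is to read the formula off the short exact sequence
\[
0 \to \mathcal{O}_X(\Delta) \to T_X \to L\otimes I_Z \to 0
\]
established in Lemma~\ref{ex-seq-1}.1, by computing total Chern classes. Since $X$ is a smooth surface, every coherent sheaf admits a finite locally free resolution, so Chern classes are defined for all three terms and the Whitney multiplicativity $c(T_X)=c(\mathcal{O}_X(\Delta))\cdot c(L\otimes I_Z)$ applies.

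First I would compute the Chern classes of the rightmost term. Because $Z$ is zero-dimensional, the structure sequence $0\to I_Z\to \mathcal{O}_X\to \mathcal{O}_Z\to 0$ gives $c(I_Z)=c(\mathcal{O}_Z)^{-1}$; as $\mathcal{O}_Z$ is supported on points of total length $\ell:=\mathrm{length}(\mathcal{O}_Z)$, passing to Chern characters (or using the resolution of $\mathcal{O}_Z$) yields $c_1(I_Z)=0$ and $c_2(I_Z)=\ell$. Twisting by the invertible sheaf $L$ and using the rank-one twisting formulas then gives $c_1(L\otimes I_Z)=c_1(L)$ and $c_2(L\otimes I_Z)=\ell$.

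Next, with $c(\mathcal{O}_X(\Delta))=1+\Delta$, expanding the Whitney product degree by degree produces
\[
c_1(T_X)=\Delta+c_1(L), \qquad c_2(T_X)=\ell+\Delta\cdot c_1(L).
\]
Substituting the identities $c_1(T_X)=-K_X$ and $c_2(T_X)=c_2(X)$, the first equation gives $c_1(L)=-K_X-\Delta$, and inserting this into the second and solving for $\ell$ yields
\[
\mathrm{length}(\mathcal{O}_Z)=\ell=K_X\cdot\Delta+\Delta^2+c_2(X),
\]
which is the asserted formula.

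Finally, for the multiplicative case I would invoke Lemma~\ref{ex-seq-1}.2, which states that when $D^p=D$ the scheme $Z$ is reduced with length equal to the number of isolated fixed points of $D$; combined with the formula just derived, this number equals $K_X\cdot\Delta+\Delta^2+c_2(X)$. The only genuinely delicate point is the Chern-class bookkeeping for the non-locally-free sheaf $L\otimes I_Z$, and in particular the identity $c_2(I_Z)=\mathrm{length}(\mathcal{O}_Z)$; once that is in place via Chern characters, the remainder is a routine expansion of the Whitney product and an identification of $c_1(T_X)$ and $c_2(T_X)$ with $-K_X$ and $c_2(X)$.
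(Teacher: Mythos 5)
Your proof is correct, and it reaches the formula by a genuinely different computation than the paper, although both arguments hinge on the same input, namely the exact sequence $0 \to \mathcal{O}_X(\Delta) \to T_X \to L\otimes I_Z \to 0$ of Lemma~\ref{ex-seq-1}. You expand the Whitney product $c(T_X)=c(\mathcal{O}_X(\Delta))\cdot c(L\otimes I_Z)$ and extract the answer from the degree-two term, using $c_2(I_Z)=\mathrm{length}(\mathcal{O}_Z)$ for the zero-dimensional scheme $Z$; the degree-one term recovers $c_1(L)=-K_X-\Delta$, which is the same identification $L\cong\omega_X^{-1}\otimes\mathcal{O}_X(-\Delta)$ the paper makes by taking determinants. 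The paper instead computes $\mathrm{length}(\mathcal{O}_Z)=\chi(L)-\chi(L\otimes I_Z)=\chi(\mathcal{O}_X(\Delta))+\chi(\omega_X^{-1}\otimes\mathcal{O}_X(-\Delta))-\chi(T_X)$ from additivity of the Euler characteristic and then evaluates each term by Riemann--Roch and Serre duality. What your route buys is brevity and the avoidance of Riemann--Roch for the rank-two bundle $T_X$; what it costs is the need to justify Chern classes and the Whitney formula for the non-locally-free sheaf $L\otimes I_Z$ (finite locally free resolutions on a smooth quasi-projective surface, and the computation of $c_2(I_Z)$ via Chern characters), which you correctly flag as the one delicate point. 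The paper's version only ever applies $\chi$ to the terms of exact sequences and Riemann--Roch to locally free sheaves, so it sidesteps that issue at the price of a slightly longer calculation. The treatment of the multiplicative case is identical in both: it is an immediate appeal to Lemma~\ref{ex-seq-1}.2.
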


\begin{proof}

From Proposition~\ref{ex-seq-1} there is an exact sequence
\begin{gather}
0 \rightarrow \mathcal{O}_X(\Delta) \rightarrow T_X \rightarrow L \otimes I_Z \rightarrow 0.  \label{ex-seq-3}
\end{gather}
Since $Z$ has codimension 2 it follows that $L\cong \omega_X^{-1}\otimes \mathcal{O}_X(-\Delta)$. Moreover, from the  exact sequence
\[
0 \rightarrow L\otimes I_Z \rightarrow L \rightarrow \mathcal{O}_Z \rightarrow 0
\]
and the exact sequence~(\ref{ex-seq-3}) it follows that
\begin{gather*}
\mathrm{length}(\mathcal{O}_Z)=\chi(L)-\chi(L\otimes I_Z)=\chi(L)-\chi(T_X)+\chi(\mathcal{O}_X(\Delta))=\\
\chi(\mathcal{O}_X(\Delta))+\chi(\omega_X^{-1}\otimes \mathcal{O}_X(-\Delta))-\chi(T_X).
\end{gather*}
Be Riemann-Roch and Serre duality we get the following equalities.
\begin{gather*}
\chi(T_X)=2\chi(\mathcal{O}_X)+K_X^2-c_2(X),\\
\chi(\mathcal{O}_X(\Delta))=\chi(\mathcal{O}_X)+1/2(\Delta^2-\Delta \cdot K_X),\\
\chi(\omega_X^{-1}\otimes \mathcal{O}_X(-\Delta))= \chi(\omega_X^{2}\otimes \mathcal{O}_X(\Delta))=\\
\chi(\mathcal{O}_X)+1/2\left((\Delta+2K_X)^2-(\Delta+2K_X)\cdot K_X\right)=\\
\chi(\mathcal{O}_X)+1/2(2K_X^2+3K_X\cdot \Delta+\Delta^2).
\end{gather*}
Therefore from the above equations it follows that
\begin{gather*}
\mathrm{length}(\mathcal{O}_Z)=\chi(\mathcal{O}_X(\Delta))+\chi(\omega_X^{-1}\otimes \mathcal{O}_X(-\Delta))-\chi(T_X)=\\
2\chi(\mathcal{O}_X)+K_X^2+K_X\cdot \Delta +\Delta^2-\chi(T_X)=K_X\cdot \Delta +\Delta^2 +c_2(X),
\end{gather*}
as claimed. Suppose $D$ is of multiplicative type. Then by Lemma~\ref{ex-seq-1}, the embeded part $Z$ of the fixed locus of $D$ is reduced and its length is the same as the number of isolated singular points of $D$. Therefore the number of isolated singular points of $D$ is $K_X\cdot \Delta +\Delta^2 +c_2(X)$.
\end{proof}

\begin{corollary}\label{no-of-sing-of-quot}
Let $X$ be a smooth surface defined over an algebraically closed field $k$ of characteristic $p>0$ with a $\mu_p$ action. Let $\Delta$ be the divisorial part of the fixed locus of the action. Let $\pi \colon X \rightarrow Y$ be the quotient. Then $Y$ has exactly $K_X \cdot \Delta +\Delta^2 +c_2(X)$ singular points.
\end{corollary}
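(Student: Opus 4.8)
The plan is to combine Proposition~\ref{prop2}, Proposition~\ref{prop3} and Proposition~\ref{size-of-sing}; the result is essentially a corollary of the accumulated machinery, and the only point genuinely requiring justification is that $\pi$ induces a \emph{bijection} between the isolated fixed points of the action and the singular points of $Y$, rather than merely a surjection.

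First I would recall, via Proposition~\ref{prop2}, that the given $\mu_p$ action is induced by a nontrivial global vector field $D$ of multiplicative type, so that $D^p=D$. Let $Z$ denote the embedded part of the fixed locus of $D$ and let $\Delta$ be its divisorial part, exactly as in the hypotheses of Proposition~\ref{size-of-sing}. By Proposition~\ref{prop3}.3, since $X$ is smooth the singular points of $Y$ are precisely the set-theoretic images under $\pi$ of the embedded part $Z$, that is, of the isolated fixed points of $D$.

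To promote this set-theoretic description to an exact count, I would use that $\pi\colon X\rightarrow Y$ is a purely inseparable morphism of degree $p$: it is the quotient by a height-one group scheme action and the Frobenius $F\colon X\rightarrow X^{(p)}$ factors through it (as recorded in the proof of Proposition~\ref{prop4}), so $\pi$ is a universal homeomorphism and in particular a bijection on points. Hence distinct isolated fixed points of $D$ have distinct images, and the number of singular points of $Y$ equals the number of isolated fixed points of $D$. This bijectivity is the main, and only, obstacle; everything else is a direct citation.

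Finally, since $D$ is of multiplicative type, Lemma~\ref{ex-seq-1}.2 shows that $Z$ is reduced with length equal to the number of isolated fixed points, and Proposition~\ref{size-of-sing} evaluates this number as $K_X\cdot\Delta+\Delta^2+c_2(X)$. Combining this with the bijection established above gives that $Y$ has exactly $K_X\cdot\Delta+\Delta^2+c_2(X)$ singular points, as claimed.
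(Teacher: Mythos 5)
Your proposal is correct and follows essentially the same route as the paper: Proposition~\ref{prop3}.3 identifies the singular locus of $Y$ with the image of the isolated fixed points, and Proposition~\ref{size-of-sing} (via the reducedness of $Z$ for multiplicative $D$) counts those points as $K_X\cdot\Delta+\Delta^2+c_2(X)$. The only difference is that you make explicit the injectivity of $\pi$ on points (via its purely inseparable nature), a step the paper leaves tacit; this is a welcome clarification but not a different argument.
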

\begin{proof}
This follows immediately from Proposition~\ref{no-of-sing-of-quot}. Indeed, by Proposition~\ref{prop3}, the singular locus of $Y$ is exactly the set theoretic image of the embeded part of the fixed locus of the $\mu_p$ action. However, by Proposition~\ref{no-of-sing-of-quot} the number of isolated fixed points of $D$ is $K_X \cdot \Delta +\Delta^2 +c_2(X)$ and therefore this is the number of singular points of $Y$.
\end{proof}

Proposition~\ref{size-of-sing} suggests that $K_X \cdot \Delta$ is closely related with the size of the isolated singularities of $D$ and hence of the singular locus of the quotient $Y$. The next proposition shows that it decreases after blowing up a singular point of $D$.

\begin{proposition}\label{K-decreases}
Let $X$ be a smooth surface defined over a field of characteristic $p>0$. Let $D$ be a nonzero global vector field on $X$. Let $\Delta$ be its divisorial part. Let $f \colon X^{\prime} \rightarrow X$ be the blow up of an isolated singular point of $D$, $D^{\prime}$ the lifting of $D$ in $X^{\prime}$ and $\Delta^{\prime}$ its divisorial part. Then
\[
K_{X^{\prime}}\cdot \Delta^{\prime} \leq K_X \cdot \Delta.
\]
\end{proposition}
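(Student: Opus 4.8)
The plan is to reduce the inequality to a purely local computation at the blown-up point $P$ together with some elementary intersection-theoretic bookkeeping. First I would record the two global relations attached to the blow up $f\colon X'\to X$ of the isolated singular point $P$: namely $K_{X'}=f^{\ast}K_X+E$, where $E$ is the exceptional curve, and the decomposition of the new divisorial part as $\Delta'=f^{\ast}\Delta+(r-m)E$. Here $m=\mathrm{mult}_P\Delta$ is the multiplicity of the old divisorial part at $P$ (so that the strict transform is $f^{\ast}\Delta-mE$, which is the divisorial part of $D'$ away from $E$ since there $D'=D$), and $r$ is the multiplicity with which $E$ occurs in $\Delta'$. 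Using $E^2=-1$, the projection formula $f^{\ast}K_X\cdot f^{\ast}\Delta=K_X\cdot\Delta$, and $f^{\ast}K_X\cdot E=f^{\ast}\Delta\cdot E=0$, a direct expansion gives
\[
K_{X'}\cdot\Delta'=K_X\cdot\Delta+(m-r).
\]
Thus the entire statement is equivalent to the single inequality $r\ge m$: the exceptional divisor must enter the divisorial part of the lifted field at least as strongly as $\Delta$ was singular at $P$.

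To prove $r\ge m$ I would pass to the local description furnished by Lemma~\ref{ex-seq-1}.2: in \'etale local coordinates $\mathcal{O}_X=k[x,y]$ one may write $D=h\,(a\,\partial/\partial x+b\,\partial/\partial y)$ with $a,b$ relatively prime, so that the divisorial part is $V(h)$ (hence $m=\mathrm{ord}_P h$) and the embedded fixed locus is $V(a,b)$. Since $P$ is an isolated fixed point it lies on $V(a,b)$, whence $a(P)=b(P)=0$. In the blow-up chart $x=u$, $y=uv$ (with $E=\{u=0\}$) the lifted derivation is computed from $D'(u)=D(x)$ and $D'(v)=D(y/x)=(xD(y)-yD(x))/x^2$, giving
\[
D'=h\,a\,\frac{\partial}{\partial u}+\frac{h\,(b-v a)}{u}\,\frac{\partial}{\partial v},
\]
where $a,b,h$ are now evaluated at $(u,uv)$. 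The multiplicity $r$ of $E$ in $\Delta'$ is the order of vanishing along $E$ of $\gcd(D'(u),D'(v))$, which equals $\min\bigl(\mathrm{ord}_E D'(u),\mathrm{ord}_E D'(v)\bigr)$.

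It then remains to bound the two orders. Since pulling back under the blow up multiplies the exceptional order of a function by its multiplicity at $P$, one has $\mathrm{ord}_E h(u,uv)=m$, and hence $\mathrm{ord}_E D'(u)=m+\mathrm{mult}_P a\ge m$. For the second component I would factor out $u^{m-1}$ and use that $b-va$ vanishes identically on $E$ because $a(P)=b(P)=0$; this yields $\mathrm{ord}_E D'(v)=(m-1)+\mathrm{ord}_E(b-va)\ge(m-1)+1=m$. Therefore $r\ge m$, and by the reduction above $K_{X'}\cdot\Delta'\le K_X\cdot\Delta$. A symmetric computation in the chart $x=uv$, $y=u$ covers the remaining point of $E$ and shows the estimate is chart-independent.

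The step I expect to be the main obstacle is the careful verification that $r$ genuinely equals $\min(\mathrm{ord}_E D'(u),\mathrm{ord}_E D'(v))$ with no further drop, and correspondingly that $D'$ is regular along $E$: the apparent pole $1/u$ in the second component must be cancelled \emph{precisely} by the vanishing of $a$ and $b$ at $P$, and it is exactly this cancellation—guaranteed by $P$ being a fixed point—that forces $\mathrm{ord}_E(b-va)\ge 1$ and hence $r\ge m$. The remaining ingredients, that $\mathrm{ord}_E$ of a pullback equals the multiplicity at $P$ and that the divisorial order is the minimum of the coordinate orders (the $\gcd$ valuation in the discrete valuation ring at the generic point of $E$), are standard.
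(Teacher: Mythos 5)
Your proof is correct and follows essentially the same route as the paper: the paper also reduces to showing $\Delta'=f^{\ast}\Delta+aE$ with $a\ge 0$ (your $a=r-m$) via the explicit formula for $D'$ in the blow-up chart, and then concludes $K_{X'}\cdot\Delta'=K_X\cdot\Delta-a$. The only difference is that where the paper declares the nonnegativity of $a$ ``clear'' from the local expression, you spell out the order-of-vanishing bookkeeping along $E$ (using that the coefficients of $D$ vanish at the fixed point to cancel the $1/u$), which is a welcome amplification rather than a divergence.
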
 
\begin{proof}
Let $E$ be the $f$-exceptional curve. I will show that $\Delta^{\prime}=f^{\ast}\Delta +aE$, $a\geq 0$. Then \[
K_{X^{\prime}}\cdot \Delta^{\prime}=K_X\cdot \Delta -a \leq K_X\cdot \Delta.
\]
The proof of the previous claim will be by a direct local calculation of $D^{\prime}$. In suitable local coordinates at an isolated singular point of $D$, $\mathcal{O}_X=k[x,y]$ and $D$ is given by $D=h \left( f\partial/\partial x +g \partial /\partial y\right)$, where $f,g$ have no common factor. Locally at the standard open affine covers, the blow up is given by $\phi \colon k[x,y] \rightarrow k[s,t]$, $\phi(x)=s$, $\phi(y)=st$. Then it is easy to see that 
\[
D^{\prime}=h(s,st)\left(f(s,st)\frac{\partial}{\partial x}+\frac{1}{s}\left(tf(s,st)+g(s,st)\right)\frac{\partial}{\partial y}\right).
\]
It is now clear that $\Delta^{\prime}=f^{\ast}\Delta +aE$, $a\geq 0$.
\end{proof}

\begin{corollary}\label{sec3-cor-1}
Let $D$ be a nonzero global vector field of either multiplicative or additive type on a smooth surface $X$ defined over a field of characteristic 2. Let 
\[
\xymatrix{
X^{\prime}\ar[r]^f \ar[d]^{\pi^{\prime}} & X \ar[d]^{\pi} \\
Y^{\prime} \ar[r]^g & Y \\
}
\]
be the resolution of singularities of $D$ as in Proposition~\ref{prop4}. Let $\Delta$ be the divisorial part of $D$ and $\Delta^{\prime}$ the divisorial part of the lifting $D^{\prime}$ of $D$ on $X^{\prime}$. Then 
\begin{enumerate}
\item \[
K_{X^{\prime}}\cdot \Delta^{\prime} \leq K_X \cdot \Delta.
\]
\item
\[
K_{X^{\prime}}\cdot \Delta^{\prime}=4\left(\chi(\mathcal{O}_{X^{\prime}})-2\chi(\mathcal{O}_{Y^{\prime}})\right).
\]
\end{enumerate}
\begin{proof}
The proof of the first statement follows immediately from Proposition~\ref{K-decreases} since $f$ is a composition of blow ups of isolated singular points of $D$.

For the proof of the second statement, recall From Proposition~\ref{prop4} that $Y^{\prime}$ is the quotient of $X^{\prime}$ by the lifting $D^{\prime}$ of $D$ on $X^{\prime}$. Then by adjunction for purely inseperable maps~\cite{R-S76}, 
\begin{gather}\label{sec3-eq-10}
K_{X^{\prime}}=(\pi^{\prime})^{\ast}K_{Y^{\prime}}+\Delta^{\prime}.
\end{gather} 
Moreover, from Proposition~\ref{structure-of-pi} it follows (since $Y^{\prime}$ is smooth) that $\pi^{\prime} \colon X^{\prime} \rightarrow Y^{\prime}$ is a torsor. In particular $\pi_{\ast}\mathcal{O}_{X^{\prime}}$ fits in an exact sequence
\begin{gather}\label{sec3-eq-11}
0 \rightarrow \mathcal{O}_{Y^{\prime}} \rightarrow \pi_{\ast}\mathcal{O}_{X^{\prime}} \rightarrow M^{-1} \rightarrow 0,
\end{gather}
where $M=\mathcal{O}_{Y^{\prime}}(C^{\prime})$ is an invertible sheaf on $Y^{\prime}$. If the sequence splits then $D^2=D$ and if it doesn't split then $D^2=0$. Moreover, $K_{X^{\prime}}=(\pi^{\prime})^{\ast}(K_{Y^{\prime}}+C^{\prime})$. Therefore from~\ref{sec3-eq-10} we get that and $\Delta^{\prime}=(\pi^{\prime})^{\ast}C^{\prime}$. Then from~\ref{sec3-eq-11} we get that
\begin{gather}\label{sec3-eq-12}
\chi(M^{-1})=\chi(\pi^{\prime}_{\ast}\mathcal{O}_{X^{\prime}})-\chi(\mathcal{O}_{Y^{\prime}})=\chi(\mathcal{O}_{X^{\prime}})-\chi(\mathcal{O}_{Y^{\prime}}).
\end{gather}
From Riemann-Roch it follows that
\begin{gather}\label{sec3-eq-13}
\chi(M^{-1})=\chi(\mathcal{O}_{Y^{\prime}})+\frac{1}{2}((C^{\prime})^2+K_{Y^{\prime}}\cdot C^{\prime})=\\
\chi(\mathcal{O}_{Y^{\prime}})+\frac{1}{2}C^{\prime}\cdot (K_{Y^{\prime}}+C^{\prime})=\chi(\mathcal{O}_{Y^{\prime}})+\frac{1}{4}K_{X^{\prime}}\cdot \Delta^{\prime}
\end{gather}
Finally from~\ref{sec3-eq-12} amd ~\ref{sec3-eq-13} it follows that
\[
K_{X^{\prime}}\cdot \Delta^{\prime}=4\left(\chi(\mathcal{O}_{X^{\prime}})-2\chi(\mathcal{O}_{Y^{\prime}})\right),
\]
as claimed.
\end{proof}

\end{corollary}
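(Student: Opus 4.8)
The plan is to treat the two statements separately: the first inequality follows purely from the local blow-up estimate already in hand, while the second is an intersection-theoretic identity extracted from the torsor structure of the smooth quotient $\pi^{\prime}$.

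For the first statement I would simply iterate Proposition~\ref{K-decreases}. By Proposition~\ref{prop4} the morphism $f$ factors as a finite sequence of blow-ups, each centred at an isolated singular point of the successive lift of $D$. Proposition~\ref{K-decreases} says that a single such blow-up can only decrease the quantity $K \cdot (\text{divisorial part})$, so composing the resulting inequalities along the factorization of $f$ gives $K_{X^{\prime}} \cdot \Delta^{\prime} \leq K_X \cdot \Delta$ with no further input.

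For the second statement the starting point is that $D^{\prime}$ has only divisorial singularities, so by Proposition~\ref{prop4} the quotient $Y^{\prime}$ is smooth and by Proposition~\ref{structure-of-pi} the map $\pi^{\prime} \colon X^{\prime} \rightarrow Y^{\prime}$ is an honest torsor. I would first record the adjunction formula for the purely inseparable map $\pi^{\prime}$, namely $K_{X^{\prime}} = (\pi^{\prime})^{\ast} K_{Y^{\prime}} + \Delta^{\prime}$ (see \cite{R-S76}), together with the defining exact sequence
\[
0 \rightarrow \mathcal{O}_{Y^{\prime}} \rightarrow \pi^{\prime}_{\ast}\mathcal{O}_{X^{\prime}} \rightarrow M^{-1} \rightarrow 0,
\]
with $M = \mathcal{O}_{Y^{\prime}}(C^{\prime})$ invertible. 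The key identification is that the divisorial fixed part is pulled back from the base, $\Delta^{\prime} = (\pi^{\prime})^{\ast} C^{\prime}$, and hence $K_{X^{\prime}} = (\pi^{\prime})^{\ast}(K_{Y^{\prime}} + C^{\prime})$. Granting this, I would compute $\chi(M^{-1})$ two ways: the exact sequence gives $\chi(M^{-1}) = \chi(\mathcal{O}_{X^{\prime}}) - \chi(\mathcal{O}_{Y^{\prime}})$, while Riemann--Roch on the smooth surface $Y^{\prime}$ gives $\chi(M^{-1}) = \chi(\mathcal{O}_{Y^{\prime}}) + \tfrac{1}{2}\, C^{\prime} \cdot (K_{Y^{\prime}} + C^{\prime})$. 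Since $\pi^{\prime}$ is purely inseparable of degree $2$, the projection formula reads $(\pi^{\prime})^{\ast} A \cdot (\pi^{\prime})^{\ast} B = 2\,(A \cdot B)$, so $C^{\prime} \cdot (K_{Y^{\prime}} + C^{\prime}) = \tfrac{1}{2}\, K_{X^{\prime}} \cdot \Delta^{\prime}$. Equating the two expressions and clearing denominators yields $K_{X^{\prime}} \cdot \Delta^{\prime} = 4\bigl(\chi(\mathcal{O}_{X^{\prime}}) - 2\chi(\mathcal{O}_{Y^{\prime}})\bigr)$.

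I expect the main obstacle to be the identification $\Delta^{\prime} = (\pi^{\prime})^{\ast} C^{\prime}$ and the companion expression $K_{X^{\prime}} = (\pi^{\prime})^{\ast}(K_{Y^{\prime}} + C^{\prime})$; once these are in place the rest is routine bookkeeping with Riemann--Roch and the degree-$2$ projection formula. This identification is precisely where the torsor structure over the \emph{smooth} base $Y^{\prime}$ is used, and it is also the point that relies on the characteristic-$2$ hypothesis, since only then does Proposition~\ref{structure-of-pi} provide a genuine torsor (with $X^{\prime}=Z$) rather than merely a torsor in codimension $2$.
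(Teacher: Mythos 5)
Your proposal is correct and follows essentially the same route as the paper: part (1) by iterating Proposition~\ref{K-decreases} along the factorization of $f$ into blow-ups, and part (2) by combining the adjunction formula $K_{X^{\prime}}=(\pi^{\prime})^{\ast}K_{Y^{\prime}}+\Delta^{\prime}$ with the torsor exact sequence, the identification $\Delta^{\prime}=(\pi^{\prime})^{\ast}C^{\prime}$, and the two computations of $\chi(M^{-1})$ via the exact sequence and Riemann--Roch. The degree-$2$ projection formula step you make explicit is exactly the bookkeeping the paper performs in its equations~\ref{sec3-eq-12} and~\ref{sec3-eq-13}.
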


\section{Surfaces with vector fields of multiplicative type in characteristic 2.}\label{sec-4}
The purpose of this section is to study smooth canonically polarized surfaces admitting global vector fields of multiplicative type. The main result is the following.
\begin{theorem}\label{mult-type}
Let $X$ be a smooth canonically polarized surface defined over an algebraically closed field $k$ of characteristic 2. Suppose that $X$ has a global vector field $D$ of multiplicative type. Then $\mu_2$ is a sub group scheme of $\mathrm{Aut}(X)$ and one of the following happens.
\begin{enumerate}
\item $K_X^2 \geq 5$.
\item $K_X^2=4$, $X$ is uniruled, $\pi_1^{et}(X)=\{1\}$ and $-2\leq \chi(\mathcal{O}_X)\leq 2$.
\item $K_X^2=3$, $X$ is uniruled, $\pi_1^{et}(X)=\{1\}$ and $-1\leq \chi(\mathcal{O}_X)\leq 1$.
\item $K_X^2=2$, $X$ is uniruled, $\pi_1^{et}(X)=\{1\}$ and $0\leq \chi(\mathcal{O}_X)\leq 1$.
\item $K_X^2=1$, $X$ is an algebraically simply connected unirational supersingular Godeaux surface. 
\end{enumerate}
Moreover, in all cases with $K_X^2<5$, $X$ is an inseparable quotient of degree 2 of a rational or ruled surface (possible singular) by a rational vector field. 
\end{theorem}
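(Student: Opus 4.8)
The assertion $\mu_2\subseteq\mathrm{Aut}(X)$ is immediate: a global vector field of multiplicative type induces a nontrivial $\mu_2$-action by Proposition~\ref{prop2}, and hence $\mathrm{Aut}(X)$ contains $\mu_2$ as a subgroup scheme by Corollary~\ref{subgroup-of-aut}. The substance is the numerical dichotomy, and my plan is to run the Rudakov--Shafarevich strategy on the quotient. I would form $\pi\colon X\rightarrow Y$ together with the resolution square of Proposition~\ref{prop4}, so that $f\colon X'\rightarrow X$ blows up the isolated singular points of $D$, $\pi'\colon X'\rightarrow Y'$ is a $\mu_2$-torsor, and---crucially in the multiplicative case---$Y'$ is the \emph{minimal} resolution of $Y$, with $Y$ carrying only $A_1$ singularities. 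Since $K_X$ is ample, $X$ has no $(-1)$-curves and is minimal, so $K_{X'}^2=K_X^2-N$ and $c_2(X')=c_2(X)+N$, where $N=K_X\cdot\Delta+\Delta^2+c_2(X)$ counts the blown-up points by Corollary~\ref{no-of-sing-of-quot}.

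The numerical engine rests on three inputs. First, the torsor adjunction of Proposition~\ref{structure-of-pi} and Corollary~\ref{sec3-cor-1}: writing $\Delta'=(\pi')^{\ast}C'$, one has $K_{X'}=(\pi')^{\ast}(K_{Y'}+C')$, so $K_{X'}^2=2(K_{Y'}+C')^2$ and $K_{X'}\cdot\Delta'=2\,C'\cdot(K_{Y'}+C')=4\big(\chi(\mathcal{O}_{X'})-2\chi(\mathcal{O}_{Y'})\big)$, together with $\chi(\mathcal{O}_{X'})=2\chi(\mathcal{O}_{Y'})+\tfrac12\,C'\cdot(C'+K_{Y'})$. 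Second, $\pi'$ is purely inseparable and hence induces an equivalence of \'etale sites, giving $c_2(X')=c_2(Y')$ and $\pi_1^{et}(X)=\pi_1^{et}(X')=\pi_1^{et}(Y')$. Third, Noether's formula on $X'$ and on $Y'$, combined with $c_2(X')=c_2(Y')$, converts the above into explicit formulas for $K_X^2$ and $\chi(\mathcal{O}_X)$ in terms of $\chi(\mathcal{O}_{Y'})$, $C'^2$, $K_{Y'}\cdot C'$ and $N$. With this in hand I would argue case by case on the Kodaira dimension $\kappa(Y')$.

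The heart of the argument---and the step I expect to be the main obstacle---is to show that $\kappa(Y')\geq 0$ forces $K_X^2\geq 5$. Here I would pass from $Y'$ to its minimal model while tracking how $C'$ and $\Delta'$ transform, and then exploit that $K_{X'}=f^{\ast}K_X+\sum_i E_i$ is big, whence $K_{Y'}+C'$ is big, together with whatever positivity of $K_{Y'}$ is available in each regime. The case $\kappa(Y')=2$ is the most direct; $\kappa(Y')=1$ requires the (possibly quasi-)elliptic fibration structure; and $\kappa(Y')=0$ is the delicate one, since the characteristic-$2$ surfaces of Kodaira dimension zero, including their quasi-elliptic and supersingular degenerations, must be examined individually. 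In each regime the bigness of $K_{Y'}+C'$ and the effectivity of $C'$ (forced by $\Delta'=(\pi')^{\ast}C'$ being effective) should bound $(K_{Y'}+C')^2$ and $C'\cdot(K_{Y'}+C')$ below, and the identities of the previous paragraph then yield $K_X^2\geq 5$. Thus, for $K_X^2\leq 4$ only $\kappa(Y')=-\infty$ can occur.

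It remains to extract the geometry and the invariant ranges when $\kappa(Y')=-\infty$. Then $Y'$, and hence $Y$, is ruled or rational. The Frobenius factorization $X\xrightarrow{\pi}Y\xrightarrow{\nu}X^{(2)}$ presents $X^{(2)}\cong X$ as a dominant purely inseparable image of $Y$; since the image of a uniruled surface is uniruled, $X$ is uniruled, and $\nu$ realizes $X\cong X^{(2)}$ as the inseparable degree-$2$ quotient of the ruled or rational surface $Y$ by the rational vector field inducing $\nu$---the final ``moreover''. The bounds on $\chi(\mathcal{O}_X)$ for each $K_X^2$ follow by combining $N\geq 0$, the identity $\chi(\mathcal{O}_{X'})-2\chi(\mathcal{O}_{Y'})=\tfrac14 K_{X'}\cdot\Delta'$, the inequality $K_{X'}\cdot\Delta'\leq K_X\cdot\Delta$ of Corollary~\ref{sec3-cor-1}, and $\chi(\mathcal{O}_{Y'})=1-g(B)$ for $B$ the base of the ruling. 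Simple connectivity $\pi_1^{et}(X)=\pi_1^{et}(Y')=\{1\}$ forces $B=\mathbb{P}^1$, i.e. $Y$ rational; in the extremal case $K_X^2=1$ the same computation gives $\chi(\mathcal{O}_X)=1$, so $X$ is a numerical Godeaux surface, and rationality of $Y$ makes $X$ unirational. The last, and second genuinely hard, point is to show that the induced Frobenius acts as zero on $H^1(\mathcal{O}_X)$---equivalently that $\mathrm{Pic}^{\tau}(X)$ is of $\alpha$-type---so that $X$ is supersingular rather than singular; this I expect to require a dedicated analysis of the $\mu_2$-action on cohomology.
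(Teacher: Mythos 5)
Your overall frame (quotient $\pi\colon X\to Y$, the resolution square of Proposition~\ref{prop4}, the torsor adjunction, Noether's formula, and a case division by Kodaira dimension) is exactly the paper's, but the pivotal claim in your third paragraph is false and it inverts the structure of the argument. You assert that $\kappa(Y')\geq 0$ forces $K_X^2\geq 5$, so that for $K_X^2\leq 4$ only $\kappa(Y')=-\infty$ occurs, and you then try to extract the uniruledness, the $\chi$ bounds and the Godeaux case from the ruled case. In the paper, $K_X^2\geq 5$ is proved only for $\kappa(Y)\in\{1,2\}$. The case $\kappa(Y)=0$ cannot be excluded for small $K_X^2$: when $Y'$ is a minimal surface of Kodaira dimension zero one only gets $K_{Y'}\equiv 0$, hence $K_X\equiv\Delta$, and a count of fixed points shows $Y$ is a K3 surface with at least $K_X^2+12\geq 13$ nodes; Shepherd-Barron's special-configuration criterion then makes $Y'$ (hence $X$) unirational, and this is precisely where the $\chi(\mathcal{O}_X)=1$ surfaces and the supersingular Godeaux surfaces of case (5) of the theorem come from. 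Your claim would erase case (5) of the very statement you are proving. Conversely, your account of $\kappa(Y')=-\infty$ misses the paper's key mechanism there: under the numerical hypotheses that you want to rule out (e.g.\ $K_X^2=1$, or $\chi(\mathcal{O}_X)$ outside the stated ranges), Lemma~\ref{b1} gives $b_1(X)=0$, hence the base of the ruling is $\mathbb{P}^1$ and $Y'$ is rational; one then lifts $Y'$, the line bundle $M$ and the section defining the torsor to $W_2(k)$, deduces that $X'$ and then $X$ lift to $W_2(k)$, and concludes from Deligne--Illusie/Kodaira--Nakano vanishing (Corollary~\ref{lifts-to-zero}) that $X$ has no global vector fields at all --- a contradiction. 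That lifting argument is what excludes $K_X^2=1$ in the ruled case and what produces the bounds on $\chi(\mathcal{O}_X)$; the numerology you propose instead (combining $N\geq 0$ with $\chi(\mathcal{O}_{X'})-2\chi(\mathcal{O}_{Y'})=\tfrac14K_{X'}\cdot\Delta'$ and $\chi(\mathcal{O}_{Y'})=1-g(B)$) does not close, because $K_X\cdot\Delta=K_X^2-K_X\cdot\pi^{\ast}K_Y$ has no a priori upper bound when $K_Y$ fails to be nef.

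A secondary point: even inside $\kappa(Y)\in\{1,2\}$ the inequality $K_X^2\geq 5$ is not a soft positivity statement. The paper must separately dispose of the boundary subcases (e.g.\ $\Delta=0$, or $K_Y\cdot(K_Y+C)=1$ with $C\cdot(K_Y+C)=\tfrac12$) by ad hoc arguments mixing the genus formula, Noether's formula on both $X$ and $Y'$, and the classification constraints $c_2(Z)\in\{0,12,24\}$; your sketch acknowledges these cases only generically. As written, the proposal would not yield the theorem: the conclusion for $K_X^2\leq 2$ is attributed to the wrong stratum of the case division, and the argument that actually carries the ruled stratum is absent.
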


\begin{corollary}\label{sm-multi-type}
Let $X$ be a smooth canonically polarized surface defined over an algebraically closed field $k$ of characteristic 2. Suppose $K_X^2 <5$ and that one of the following happens.
\begin{enumerate}
\item $X$ is not uniruled.
\item $X$ is not simply connected, i.e., $\pi_1^{et}(X)\not=\{1\}$.
\item $K_X^2\in\{2,3\}$ and $\chi(\mathcal{O}_X)\geq 2$, or $K_X^2=4$ and $\chi(\mathcal{O}_X)\geq 3$.
\item $K_X^2=1$ and either
\begin{enumerate}
\item $\chi(\mathcal{O}_X) \geq 2$, or
\item $\chi(\mathcal{O}_X) =1$ and $X$ is either a classical or singular Godeaux surface.
\end{enumerate}
\end{enumerate}
Then $X$ does not have any nontrivial global vector fields of multiplicative type. Equivalently $\mathrm{Aut}(X)$ does not have a subgroup scheme isomorphic to $\mu_2$.
\end{corollary}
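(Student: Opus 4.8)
The plan is to prove the contrapositive by a direct appeal to Theorem~\ref{mult-type}. Suppose that $X$ admits a nontrivial global vector field $D$ of multiplicative type. By the first assertion of Theorem~\ref{mult-type}, $\mu_2$ is then a subgroup scheme of $\mathrm{Aut}(X)$; this also records the equivalence asserted in the corollary, since by the correspondence established in Proposition~\ref{prop2} a nontrivial $\mu_2$-action on $X$ is the same datum as a vector field with $D^2=D$. Thus it suffices to show that under any one of the hypotheses (1)--(4) of the corollary, together with the standing assumption $K_X^2<5$, the conclusions of Theorem~\ref{mult-type} are violated, yielding the desired contradiction.

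Since $K_X^2<5$, possibility (1) of Theorem~\ref{mult-type} (namely $K_X^2\geq 5$) is excluded, so $X$ falls into one of the cases (2)--(5) of that theorem. First I would dispose of hypotheses (1) and (2) of the corollary simultaneously: in each of the cases (2)--(5) the surface $X$ is uniruled (in case (5) even unirational, hence uniruled) and satisfies $\pi_1^{et}(X)=\{1\}$, so neither ``$X$ is not uniruled'' nor ``$\pi_1^{et}(X)\neq\{1\}$'' is compatible with the theorem.

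Next I would treat the numerical hypothesis (3) by matching the value of $K_X^2$ against the permitted range of $\chi(\mathcal{O}_X)$ supplied by the theorem: for $K_X^2=2$ case (4) forces $0\leq\chi(\mathcal{O}_X)\leq 1$, for $K_X^2=3$ case (3) forces $-1\leq\chi(\mathcal{O}_X)\leq 1$, and for $K_X^2=4$ case (2) forces $-2\leq\chi(\mathcal{O}_X)\leq 2$; each of these contradicts the corresponding inequality in hypothesis (3). Finally, for hypothesis (4) with $K_X^2=1$ we are in case (5), so $X$ is a supersingular Godeaux surface; in particular $\chi(\mathcal{O}_X)=1$, which rules out (4)(a), and since a supersingular surface is by definition one whose $\mathrm{Pic}^{\tau}(X)$ contains $\alpha_2$, it can be neither classical nor singular, ruling out (4)(b).

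The argument is essentially bookkeeping once Theorem~\ref{mult-type} is available, so the only delicate point is the last one. There one must invoke the precise trichotomy of numerical Godeaux surfaces recalled in Section~\ref{sec-0} --- classical, singular and supersingular being mutually exclusive --- and use the \emph{structural} content of case (5), namely that the multiplicative vector field forces $X$ to be supersingular, rather than merely comparing numerical invariants. This is the one place where the conclusion of the corollary depends on more than a numerical inequality, and I expect it to be the step requiring the most care in the write-up.
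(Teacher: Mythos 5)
Your proposal is correct and is exactly the argument the paper intends: the corollary is stated as an immediate consequence of Theorem~\ref{mult-type}, and your contrapositive bookkeeping (excluding case (1) of the theorem via $K_X^2<5$, then matching each hypothesis against the uniruledness, simple connectedness, Euler characteristic bounds, and the supersingular Godeaux conclusion, using the disjointness of the classical/singular/supersingular trichotomy for part (4)(b)) is precisely what is needed. No gaps.
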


\begin{proof}[Proof of Theorem~\ref{mult-type}]
Suppose that $X$ has a nontrivial global vector field $D$ of multiplicative type. Then $D$ induces a nontrivial $\mu_2$ action on $X$. Let $\pi \colon X \rightarrow Y$ be the quotient. Then by Proposition~\ref{prop4}, $Y$ is normal and has only isolated surface singularities locally isomorphic to $xy+z^2=0$. Moreover, there is a commutative diagram
\begin{gather}\label{comm-diagram-1}
\xymatrix{
 & X^{\prime}\ar[r]^f \ar[d]^{\pi^{\prime}} & X \ar[d]^{\pi} \\
Z & Y^{\prime} \ar[l]_{\phi}\ar[r]^g & Y \\
}
\end{gather}
such that: $f$ is a resolution of the isolated singularities of $D$ through successive blow ups of its isolated singular points. $X^{\prime}$ is smooth, $D$ lifts to a vector field $D^{\prime}$ in $X^{\prime}$ with only divisorial singularities and $Y^{\prime}$ is the quotient of $X^{\prime}$ by the corresponding action of $\mu_2$. $Y^{\prime}$ is the minimal resolution of $Y$ and $Z$ its minimal model ($Y^{\prime}$ is the minimal resolution of $Y$ but may not be a minimal surface). Moreover, since the singularities of $Y$ are locally isomorphic to $xy+z^2=0$, $g$ is crepant. Hence $K_Y$ is Cartier and we have the following adjunction formulas
\begin{gather}\label{sec4-eq0}
K_{Y^{\prime}}=g^{\ast} K_Y \\
K_{Y^{\prime}}=\phi^{\ast}K_Z+F, \nonumber
\end{gather}
where $F$ is an effective $\phi$-exceptional divisor. Finally, by Proposition~\ref{prop3}, $2W$ is Cartier for any divisor $W$ of $Y$.

Let $\Delta$ be the divisorial part of the fixed locus of $D$. Then by Proposition~\ref{prop4}, $\Delta$ is smooth (perhaps disconnected), disjoint from the isolated singular points of $D$ and not an integral divisor of $D$. Therefore if $\Delta^{\prime}$ is the image of $\Delta$ in $Y$, $\Delta^{\prime}$ is in the smooth part of $Y$, $\pi_{\ast}\Delta =\Delta^{\prime}$ and $\pi^{\ast}\Delta^{\prime}=2\Delta$.

By adjunction for purely inseparable morphisms~\cite{Ek87}~\cite{R-S76},
\begin{gather}\label{sec4-eq1}
K_X=\pi^{\ast}K_Y+\Delta.
\end{gather}
Moreover, from Proposition~\ref{structure-of-pi}, $\pi$ is a torsor over a codimension 2 open subset of $Y$. In particular, there is a reflexive sheaf $L=\mathcal{O}_Y(C)$ on $Y$ such that $X=\mathrm{Spec} \left( \mathcal{O}_Y \oplus L^{-1} \right)$, and 
\begin{gather}\label{sec4-eq2}
K_X=\pi^{\ast}(K_Y+C).
\end{gather}
From this and~\ref{sec4-eq1} it follows that $\pi^{\ast}C=\Delta$. Then since $\pi_{\ast}\Delta = \Delta^{\prime}$ it follows that $\Delta^{\prime} \sim 2C$. Moreover, since $K_X$ is ample and $\pi$ a finite morphism, it follows that $K_Y+C$ is ample too.

Finally, from~\ref{sec4-eq2} and the fact that $\pi$ is finite of degree 2 it follows that
\begin{gather}\label{sec4-eq3}
K_X^2=2(K_Y+C)^2=2K_Y\cdot (K_Y+C) +2C\cdot (K_Y+C).
\end{gather}

The proof will be in several steps, according to the Kodaira dimension $k(Y)$ of $Y$.

\textbf{Case 1.} Suppose $k(Y)=2$. In this case I will show that $K_Y^2 \geq 5$.

In this case take $Z$ in the commutative diagram~\ref{comm-diagram-1} to be the canonical model of $Y^{\prime}$ and not simply its minimal model. Canonical models of smooth surfaces exist in any characteristic by~\cite{Art62}. 

If $Y^{\prime}$ is minimal then $F=0$, otherwise not. Moreover, if $F\not= 0$, then $g_{\ast}F \not= 0$ since $g$ does not contract $-1$ curves. 

Since $ 2C\sim \Delta^{\prime}$ which is Cartier and effective, it follows that 
\begin{gather}\label{sec4-eq4}
2C\cdot (K_Y+C)\geq 1.
\end{gather}
 Also,
\begin{gather}\label{sec4-eq5}
K_Y\cdot (K_Y+C)=g^{\ast}K_Y \cdot g^{\ast}(K_Y+C)=\\
K_{Y^{\prime}}\cdot g^{\ast}(K_Y+C)=(\phi^{\ast}K_Z +F)\cdot g^{\ast}(K_Y+C).\nonumber
\end{gather}

Suppose $F\not= 0$ and $C\not=0$. Then $F\cdot g^{\ast}(K_Y+C)=g_{\ast}F \cdot (K_Y+C)>0$, since $K_Y+C$ is ample and $g_{\ast}F\not= 0$. Moreover, $\phi^{\ast}K_Z \cdot g^{\ast}(K_Y+C) >0$. Indeed, take $n>>0$ such that $n(K_Y+C)$ is very ample. Then there is a divisor $H\in |n(K_Y+C)|$ such that $g^{\ast}H$ is not contracted by $\phi$. Since $K_Z$ is ample then $\phi^{\ast}K_Z \cdot g^{\ast}(K_Y+C) >0$, as claimed. Hence $K_Y\cdot (K_Y+C) \geq 2$. Then from~\ref{sec4-eq2},~\ref{sec4-eq4} we get that $K_X^2 \geq 5$, as claimed.

Suppose that $C\not= 0$ but $F=0$, i.e., $Y^{\prime}$ is minimal. Then $K_Y^2=K_{Y^{\prime}}^2\geq 1$. Moreover $K_Y\cdot C=1/2(K_Y \cdot \Delta^{\prime} )\geq 0$. Suppose that $K_Y \cdot \Delta^{\prime}=0$. Since $\Delta^{\prime}$ is in the smooth part of $Y$ it follows that $\Delta^{\prime\prime}=g^{\ast}\Delta^{\prime}\cong \Delta^{\prime}$. Then 
$K_{Y^{\prime}}\cdot \Delta^{\prime\prime}=0$. Since $Y^{\prime}$ is minimal of general type it follows that every irreducible component of $\Delta^{\prime\prime}$ is a smooth rational curve of self intersection $-2$. Therefore the same holds for $\Delta^{\prime}$. Let now $W^{\prime}$ be an irreducible component of $\Delta^{\prime}$. Then $\pi^{\ast}W^{\prime}=2W$, where $W$ is an irreducible component of $\Delta$. Then $4W^2=2(W^{\prime})^2=-4$ and hence $W^2=-1$. But then $K_X\cdot W=-1 <0$, which is impossible since $K_X$ is ample. Therefore $K_Y \cdot C >0$, and in fact $\geq 1$ since $K_Y$ is Cartier. Then again from equation~\ref{sec4-eq2} it follows that $K_X^2\geq 5$.

Suppose now that $C=0$. This implies that $\Delta =0$ and the fixed locus of $D$ does not have a divisorial part. This case can only happen if $k(Y)=2$. Indeed, since $\Delta=0$ it follows that $K_X=\pi^{\ast}K_Y$ and therefore since $\pi$ is finite, $K_Y$ is ample. Moreover, since $Y$ has singularities of type $A_1$, $K_{Y^{\prime}}=g^{\ast}K_Y$. Hence $K_{Y^{\prime}}$ is nef and big and hence $k(Y^{\prime})=2$.

Since $\Delta=0$, $D$ has no divisorial part. However there may be isolated singular points of $D$. Then from Proposition~\ref{size-of-sing}, $Y$ has exactly $c_2(X)$ singular points. If $c_2(x)=0$ then $K_X^2=12\chi(\mathcal{O}_X) \geq 12$. Suppose that $c_2(X)>0$. Then $c_2(X)=\chi_{et}(Y)$. Moreover,
\[
\chi_{et}(Y^{\prime})=\chi_{et}(Y)+\sum(\chi_{et}(E_i)-1)
\]
where $E_i$ are the reduced connected components of the $g$-exceptional locus. Since $Y$ has singularities of type $xy+z^2=0$, the $g$-exceptional curves are exactly $c_2(X)$ number isolated $(-2)$-curves. Hence $\chi_{et}(Y^{\prime})=2c_2(X)$.

Since $\Delta=0$, $K_X=\pi^{\ast}K_Y$. Hence $K_X^2=2K_Y^2=2K_{Y^{\prime}}^2$=2d. Then from Noether's formula $c_2(X)=12\chi(\mathcal{O}_X)-2d$. Moreover, again from Noether's formula on $Y^{\prime}$,  
\[
12\chi(\mathcal{O}_{Y^{\prime}}) =d+c_2(Y^{\prime})=d+2c_2(X)=-3d+24\chi(\mathcal{O}_X).
\]
 However, this relation is impossible for $d=1$ or $d=2$. Hence in all cases $K_X^2 \geq 5$, as claimed.

\textbf{Case 2.} Suppose $k(Y)=1$. In this case I will show again that $K_Y^2 \geq 5$.

First notice that as explained in the proof of Case 1., $\Delta \not= 0$ in this case. 

The minimal model $Z$ of $Y^{\prime}$ is then a minimal surface of Kodaira dimension 1. Then by the classification of surfaces~\cite{BM76}~\cite{BM77}~\cite{Ba01}, $Z$ admits either an elliptic or quasielliptic fibration, i.e., there is a fibration $\psi \colon Z \rightarrow B$, where $B$ is a smooth curve and the fibers of $\psi$ have arithmetic genus 1. Also from the classification of surfaces, there is $n>>0$ such that $nK_Z=\psi^{\ast}(W)$, where $W$ is a positive divisor in $B$.

Next I will show that $\phi^{\ast}K_Z\cdot E =0$, where $E$ is any $g$-exceptional curve. Indeed,
\[
\phi^{\ast}K_Z \cdot E=K_{Y^{\prime}}\cdot E -F\cdot E=g^{\ast}K_Y \cdot E -F\cdot E =-F\cdot E.
\]
Since $K_Z$ is nef, it follows that $\phi^{\ast}K_Z \cdot E=K_Z\cdot \phi_{\ast}E \geq 0$. Therefore, $F\cdot E \leq 0$. If it is strictly negative, then $E\subset F$ and hence $E$ is $\phi$-exceptional. But then $\phi^{\ast}K_Z\cdot E=0$. Hence in any case $\phi^{\ast}K_Z \cdot E=0$ for any $g$-exceptional curve. Therefore $\phi^{\ast}K_Z=g^{\ast}H$, where $H$ is a Cartier divisor on $Y$. Moreover, since $nK_Z$ is positive for large enough $n$, it follows that $nH$ is positive too. Therefore from~\ref{sec4-eq0} it follows that
\[
K_Y=H+g_{\ast}F.
\]
Since $K_Y$ and $H$ are Cartier, $g_{\ast}F$ is an effective Cartier divisor as well. Then from~\ref{sec4-eq2} it follows that
\begin{gather}\label{sec4-eq5}
K_X^2=2(K_Y+C)^2=2K_Y\cdot (K_Y+C) +2C\cdot (K_Y+C)=\\ 
2H \cdot (K_Y+C)+2g_{\ast}F\cdot (K_Y+C) +2C\cdot(K_Y+C).\nonumber
\end{gather}
Suppose that $g_{\ast}F\not= 0$, i.e., $Y^{\prime}$ is not a minimal surface. Then, since $2C$ is Cartier and equivalent to an effective divisor, the above equation implies that $K_X^2 \geq 5$, as claimed.

Suppose that $g_{\ast}F=0$, hence $Y^{\prime}$ is a minimal surface. Then in any case, equation~\ref{sec4-eq3} shows that $K_X^2 \geq 3$ and $K_X^2 \geq 5$ unless $K_Y \cdot (K_Y+C)=1$ and $C \cdot (K_Y+C)=1/2$. Suppose that this is the case. Then considering that $\pi^{\ast}C=\Delta$ and that $K_Y^2=K_{Y^{\prime}}^2=0$, it follows that $K_X\cdot \Delta =1$ and $\Delta^2 =-1$. 

Let $N_{fix}$ be the number of isolated fixed points of $D$. Then by Corollary~\ref{no-of-sing-of-quot}, 
\[
N_{fix}=K_X\cdot \Delta +\Delta^2 +c_2(X)=c_2(X).
\]
Hence $Y$ has exactly $c_2(X)$ singular points, all of them of type $A_1$. Such singularities are resolved by a single blow up. Hence $f$ is the composition of $c_2(X)$ blow ups. Hence from diagram~\ref{comm-diagram-1} it follows that
\[
c_2(Y^{\prime})=c_2(X^{\prime})=c_2(X)+c_2(X)=2c_2(X).
\]
Noether's formula (true also in characteristic 2) gives that
\[
12\chi(\mathcal{O}_{Y^{\prime}})=K_{Y^{\prime}}^2+c_2(Y^{\prime})=2c_2(X)
\]
and therefore $c_2(X)=6d$. However, Noether's formula for $X$ gives that
\[
12\chi(\mathcal{O}_X) =K_X^2 +c_2(X)=3+6d,
\]
which is clearly impossible.

\textbf{Case 2.} Suppose $k(Y)=0$. In this case I will show that;
\begin{enumerate}
\item If $K_X^2 \leq 4$, then $X$ is algebraically simply connected, unirational and $\chi(\mathcal{O}_X)=1$. Moreover, $X$ is an inseparable quotient of degree 2 of a rational or ruled surface (possible singular) by a rational vector field. 
\item If $K_X^2=1$, then $X$ is an algebraically simply connected unirational supersingular Godeaux surface. 
\end{enumerate}
Again for the same reasons as in the case when $k(Y)=1$, $\Delta\not=0$.

Suppose that $F\not=0$, i.e., $Y^{\prime}$ is not minimal. Its minimal model $Z$ is a minimal surface of Kodaira dimension zero. Therefore $12K_Z=0$. Hence 
\[
K_{Y^{\prime}}=\phi^{\ast}K_Z+F \equiv F
\]
and therefore \[
K_Y\equiv g_{\ast}F=F^{\prime}
\]
where $F^{\prime}$ is a nonzero effective Cartier divisor. Hence \[
K_Y\cdot (K_Y+C)=F^{\prime}\cdot (K_Y+C) \geq 1
\]
since $ K_Y+C$ is ample. Moreover, as explained in Case 1, $2C\cdot (K_Y+C)\geq 1$. Therefore from~\ref{sec4-eq3} we get that 
\[
K_X^2=2(K_Y+C)^2=2K_Y\cdot (K_Y+C) +2C\cdot (K_Y+C)\geq 3.
\]
In fact, $K_X^2 \geq 5$ unless 
\begin{gather}\label{sec4-eq6}
K_Y\cdot (K_Y+C)=1\\
C\cdot (K_Y+C)=1/2,\nonumber
\end{gather}
in which case $K_X^2=3$. I will show that this case is impossible. Suppose that this is the case. Then 

\textbf{Claim 1.} 
\begin{enumerate}
\item $\Delta \cong \mathbb{P}^1$.
\item $\Delta^2=-3$, $K_X \cdot \Delta = 1$ and $K_{Y^{\prime}}^2=-1$.
\end{enumerate}
Indeed. From~\ref{sec4-eq6} it follows that 
\[
K_X \cdot \Delta =\pi^{\ast}(K_Y+C)\cdot \pi^{\ast}C=2(K_Y+C)\cdot C =1.
\]
Therefore, since $K_X$ is ample, $\Delta$ is an irreducible and reduced curve. On the other hand the equation $K_Y\cdot (K_Y+C)=1$ gives that
\[
\pi^{\ast}K_Y \cdot \Delta = 2K_Y \cdot C=2(1-K_Y^2)=2(1-K_{Y^{\prime}}^2)=2(1-F^2)\geq 4,
\]
since $F^2<0$. Now again from~\ref{sec4-eq6} we get that
\[
\Delta^2=1-\Delta\cdot \pi^{\ast}K_Y \leq -3.
\]
However the genus formula for $\Delta$ gives that
\[
\mathrm{p}_a(\Delta)=\frac{1}{2}(2+K_X\cdot \Delta+\Delta^2)=\frac{1}{2}(3+\Delta^2)\leq 0,
\]
since $\Delta^2 \geq -3$. This implies that $\Delta^2=-3$ and $\mathrm{p}_a(\Delta)=0$. Hence $\Delta \cong \mathbb{P}^1$. Finally from the relation $K_X=\pi^{\ast}K_Y+\Delta$ it follows that \[
K_Y^2=\frac{1}{2}(K_X-\Delta)^2=K_X^2+\Delta^2-2K_X\cdot \Delta =-1,
\]
and the claim is proved.

\textbf{Claim 2.} The map $\phi$ is a single blow up. 

Indeed. From Claim 1 it follows that $K_{Y^{\prime}}^2=K_Y^2=-1$. On the otherhand, $K_Z^2=0$ and $\phi$ is a composition of blow ups. Considering that $K^2$ is reduced by 1 after every blow up it follows that $\phi$ is a single blow up.

Let $N_{fix}$ be the number of isolated singular points of $D$. Then from Corollary~\ref{no-of-sing-of-quot} and Claim 1,
\[
N_{fix}=K_X\cdot \Delta +\Delta^2+c_2(X)=-2+c_2(X).
\]
$Y^{\prime}$ is the minimal resolution of $Y$ whic has exactly $N_{fix}$ singular points, all of type $A_1$. Hence the $g$-exceptional curves are  exactly $N_{fix}$ isolated $-2$ curves. Then from diagram~\ref{comm-diagram-1} and Claim 2 we get that \[
c_2(Z)=c_2(Y^{\prime})-1=\chi_{et}(Y^{\prime})+N_{fix}-1=c_2(X)+N_{fix}-1=2c_2(X)-3.
\]
By the classification of surfaces~\cite{BM76}~\cite{BM77}, $c_2(Z)\in \{0,12,24\}$. Hence $2c_2(X)\in\{ 3, 15, 27\}$, which is clearly impossible. Hence the case $K_X^2=3$ is impossible and therefore, if $Y^{\prime}$ is not minimal, then $K_X^2 \geq 5$.

Assume now that $Y^{\prime}$ is minimal. Then $K_{Y^{\prime}}\equiv 0$ and hence $K_Y \equiv 0$ which implies that $K_X\equiv \Delta$. Then 
\begin{gather}\label{sec4-eq8}
N_{fix}=K_X\cdot \Delta +\Delta^2 +c_2(X)=2K_X^2+c_2(X).
\end{gather}
This implies that $Y$ is singular. Indeed. If $Y$ was smooth, then $Y=Y^{\prime}$ and $c_2(X)=c_2(Y^{\prime}) \in\{0,12,24\}$, since $Y^{\prime}$ is a minimal surface of Kodaira dimension zero. In particular, $c_2(X)\geq 0$ and hence $N_{fix}>0$. Hence $Y$ is singular and so $Y^{\prime}\not= Y$. Moreover,
\begin{gather}\label{sec4-eq7}
c_2(Y^{\prime})=\chi_{et}(Y)+N_{fix}=c_2(X)+N_{fix}=2(K_X^2+c_2(X))=24\chi(\mathcal{O}_X),
\end{gather}
from Noether's formula. Now since $c_2(Y^{\prime})\in \{0,12,24\}$ it follows from~\ref{sec4-eq7} that $c_2(Y^{\prime})=0$ or $24$ and consequently $\chi(\mathcal{O}_X)=0$ or $1$. Hence $Y^{\prime}$ is either an Abelian, a K3 or a quasi-hyperelliptic surface~\cite{BM76}~\cite{BM77}. I will show that $Y^{\prime}$ is a K3 surface. Suppose not. Then $\chi(\mathcal{O}_X)=0$, $c_2(Y^{\prime})=0$ and $Y^{\prime}$ is either an Abelian surface or a quasi-hyperelliptic surface. Since $\chi(\mathcal{O}_X)=0$, Noether's formula gives that $c_2(X)=-K_X^2 <0$, since $K_X$ is ample. Hence from~\cite{SB91} it follows that $X$ is uniruled and therefore so is $X^{\prime}$. But a uniruled surface cannot dominate an Abelian surface. Hence this case is impossible and therefore $Y^{\prime}$ is a quasi-hyperelliptic surface. Then there exists an elliptic or quasi-hyperelliptic fibration $\Phi \colon Y^{\prime}\rightarrow E$, where $E=\mathrm{Alb}(Y^{\prime})$ is a smooth elliptic curve. Moreover every fiber or $\Phi$ is 
irreducible~\cite{BM76}. if $Y$ is singular then the $g$-exceptional curves are isolated smooth rational $-2$ curves. Since $E$ is elliptic, every $g$-exceptional curve must contract to a point by $\Phi$. But this is impossible because every fiber of $\Phi$ is an irreducible curve or arithmetic genus 1. Hence $Y$ is a K3 surface. Therefore taking into consideration that the \'etale fundamental groupd is a birational invariant and that $\pi^{\prime}$ gives an equivalence between the \'etale sites of $X^{\prime}$ and $Y^{\prime}$ we get that 
\[
\pi_1^{et}(X)=\pi_1^{et}(X^{\prime})=\pi_1^{et}(Y^{\prime})=\{1\}
\]
and therefore $X$ is algebraically simply connected.

Next I will show that $Y^{\prime}$ is unirational. In order to show this I will show that $Y$ has at least 13 singular points of type $A_1$. Then, $Y^{\prime}$ has at least 13 isolated $-2$ curves and hence in the terminology of~\cite{SB96}, $Y^{\prime}$ has a special configuration $E$ of rank at least 13. Therefore it is unirational~\cite{SB96}. Considering that $\chi(\mathcal{O}_X)=1$, from~\ref{sec4-eq8} we get that
\[
N_{fix}=2K_X^2+c_2(X)=K_X^2+12\chi(\mathcal{O}_X)=K_X^2+12\geq 13,
\]
since $K_X^2>0$. Hence $Y$ has at least 13 singular points (all necessarily of type $A_1$), as claimed.

Next I will show that $X$ is a purely inseparable quotient of degree 2 of a unirational surface. Let $F \colon Y_{(2)}\rightarrow Y$ be the $k$-linear Frobenious. Then there is a factorization
\[
\xymatrix{
Y_{(2)}\ar[dd]^{F}\ar[dr]^{\nu} &  \\
                                    & X \ar[dl]^{\pi}\\
Y &
}
\]
Since $Y$ is unirational, $Y_{(2)}$ is unirational too. Moreover, $\nu$ is purely inseparable of degree 2 and in addition any such map is induced by a rational vector field on $Y_{(2)}$~\cite{R-S76}. 

Suppose now that $K_X^2=1$. Since we already proved that $\chi(\mathcal{O}_X)=1$, $X$ is a numerical Godeaux surface. It remains to show that $X$ is a supersingular Godeaux. This means that $h^1(\mathcal{O}_X)=1$ and map $F^{\ast} \colon H^1(\mathcal{O}_X) \rightarrow H^1(\mathcal{O}_X)$ induced by the Frobenious is zero.

Since $\chi(\mathcal{O}_X)=1$ it follows that $h^1(\mathcal{O}_X)=p_g(X)=h^0(\omega_X)$. However, since $Y^{\prime}$ is a K3 surface, $\omega_{Y^{\prime}}\cong \mathcal{O}_{Y^{\prime}}$ and therefore $\omega_Y\cong \mathcal{O}_Y$. Then from~\ref{sec4-eq2} it follows that $\omega_X=(\pi^{\ast}L)^{\ast\ast}$, and therefore 
\[
H^0(\omega_X)=H^0(\pi_{\ast}(\pi^{\ast}L)^{\ast\ast})=H^0((L\otimes (\mathcal{O}_Y\oplus L^{-1})^{\ast\ast})=H^0(\mathcal{O}_Y\oplus L)\not=0.
\]
Hence $X$ is either a singular or a supersingular Godeaux surface. If $X$ was singular, $F^{\ast}$ is injective and the exact sequence~\cite[Page 127]{Mi80}
\[
0 \rightarrow \mathbb{Z}/2\mathbb{Z} \rightarrow \mathcal{O}_X \stackrel{F-1}{\rightarrow} \mathcal{O}_X \rightarrow 0
\]
gives that $H^1_{et}(X,\mathbb{Z}/2\mathbb{Z})\not= 0$. Hence $X$ has \'etale $2$-covers. But this is impossible since $\pi_1^{et}(X)=\{1\}$. Hence $X$ is a supersingular Godeaux surface.

\textbf{Case 4.} Suppose that $k(Y)=-\infty$. In this case I will show that $K_X^2 \geq 2$, $X$ is uniruled and $\pi_1^{et}(X)=\{1\}$. Moreover, 
\begin{enumerate}
\item If $K_X^2=4$, then $-2\leq \chi(\mathcal{O}_X)\leq 2$.
\item If $K_X^2=3$, then $-1\leq \chi(\mathcal{O}_X)\leq 1$.
\item If $K_X^2=2$, then $0\leq \chi(\mathcal{O}_X)\leq 1$.
\end{enumerate}

For the proof I will need the following simple result. Its statement about the case $K_X^2=1$ can be found in~\cite{Li09}.

\begin{lemma}\label{b1}
Let $X$ be a smooth surface of general type defined over an algebraically closed field $k$. Suppose that one of the following happens.
\begin{enumerate}
\item $K_X^2=1$.
\item $K_X^2\in\{2,3\}$ and $\chi(\mathcal{O}_X) \geq 2$.
\item $K_X^2=4$ and $ \chi(\mathcal{O}_X)\geq 3$.
\end{enumerate}
Then $b_1(X)=0$. Moreover,
\begin{enumerate}
\item If $K_X^2=1$ then $|\pi_1^{et}(X)|\leq 6$.
\item If $K_X^2=2$ then $|\pi_1^{et}(X)|\leq 3$. Moreover, if $\chi(\mathcal{O}_X) \geq 3$, then $\pi_1^{et}(X)=\{1\}$.
\item If $K_X^2=3$ then $|\pi_1^{et}(X)|\leq 6$. Moreover, if $\chi(\mathcal{O}_X) \geq 4$, then $\pi_1^{et}(X)=\{1\}$.
\item If $K_X^2=4$ then $|\pi_1^{et}(X)|\leq 3$. Moreover, if $\chi(\mathcal{O}_X) \geq 4$, then $\pi_1^{et}(X)=\{1\}$.
\end{enumerate}
\end{lemma}

\begin{proof}[Proof of the lemma.]
It is well known that for all but finitely many primes $l$,
\begin{gather}\label{sec4-eq9}
H^1_{et}(X,\mathbb{Z}/l\mathbb{Z})\cong\left(\mathbb{Z}/l\mathbb{Z}\right)^{b_1(X)}.
\end{gather}
The claim then that $b_1(X)=0$ will follow from the finiteness of $\pi_1^{et}(X)$. Let $f \colon Y \rightarrow X$ be an \'etale cover of degree $n$. Then $K_Y^2=nK_X^2$ and $\chi(\mathcal{O}_Y)=n\chi(\mathcal{O}_X)$. From Noether's inequality we get that
\[
nK_X^2=K_Y^2\geq 2\mathrm{p}_g(Y)-4=2(\chi(\mathcal{O}_Y)-1+h^1(\mathcal{O}_Y))-4\geq 2n\chi(\mathcal{O}_X)-6.
\]
Therefore,
\[
n(2\chi(\mathcal{O}_X)-K_X^2)\leq 6.
\]
Hence if $2\chi(\mathcal{O}_X)-K_X^2>0$, then 
\begin{gather}\label{sec4-eq10}
n \leq \frac{6}{2\chi(\mathcal{O}_X)-K_X^2}\leq 6
\end{gather}

By~\cite[Corollary 1.8]{Ek87}, $\chi(\mathcal{O}_X) \geq 2-K_X^2$. Hence if $K_X^2=1$, then $2\chi(\mathcal{O}_X)-K_X^2>0$. Also under the hypotheses of~\ref{b1}.2,3, $\chi(\mathcal{O}_X)-K_X^2>0$ as well. Therefore in all these cases, $n \leq 6$ and hence  $|\pi_1^{et}(X)|\leq 6$ and therefore $b_1(X)=0$. The statements about the orders of the e\'tale fundamental group follow by bounding the values of $n$ from~\ref{sec4-eq10} in each case. 
\end{proof}

Suppose that $k(Y)=-\infty$. Then $Z$ is ruled surface over a smooth curve $B$ and therefore $Y$ is ruled and hence $X$ is ruled as well. Let $\Psi \colon Z \rightarrow B$ be the ruling. Considering that the \'etale fundamental group is invariant under birational maps between smooth varieties and under purely inseparable maps, we get that 
\[
\pi_1^{et}(X)=\pi_1^{et}(X^{\prime})=\pi_1^{et}(Y^{\prime})=\pi_1^{et}(Z)=\{1\},
\]
since $Z$ is ruled. 

In order to conclude the proof of Case 4, it remains to show that the case $K_X^2=1$ does not happen and the claim about the euler characteristics. 

From Lemma~\ref{b1} it follows that if $K_X^2=1$ then $b_1(X)=0$. Moreover  if the inequalities for the euler characteristics in the statement  of Case 4 do not hold, then $b_1(X)=0$ as well. Hence all the claims will follow if I show that if $b_1(X)=0$ then $X$ does not have nontrivial global vector fields. 

Suppose then that $b_1(X)=0$. Then $B \cong \mathbb{P}^1_k$ and hence both $Y^{\prime}$, $Y$ are rational and therefore $X$ is unirational. 

\textbf{Claim:} $X$ lifts to $W_2(k)$, where $W_2(k)$ is the ring of second Witt vectors over $k$.

Suppose that the claim is true. Then from Corollary~\ref{lifts-to-zero}, $X$ has no nontrivial global vector fields.

It remains then to prove the claim. Recall that $Y^{\prime}$ is the quotient of $X^{\prime}$ by the $\mu_2$ action on $X^{\prime}$ induced by the lifting $D^{\prime}$ of $D$ on $X^{\prime}$. Since both $X^{\prime}$ and $Y^{\prime}$ are smooth it follows from Proposition~\ref{structure-of-pi} that 
\[
X^{\prime}=\mathrm{Spec}\left( \mathcal{O}_{Y^{\prime}}\oplus M^{-1} \right)
\]
where $M$ is an invertible sheaf on $Y^{\prime}$ and the ring structure on $\mathcal{O}_{Y^{\prime}}\oplus M^{-1}$ is induced by a section $s$ of $M^2$. It is well known~\cite{Ha10} that $H^2(T_{Y^{\prime}})$ is an obstruction space for deformations of $X$ over local Artin rings and $H^1(\mathcal{O}_{Y^{\prime}})$, $H^2(\mathcal{O}_{Y^{\prime}})$ are obstruction spaces for deformations of line bundles and sections of line bundles on $Y^{\prime}$. Since $Y^{\prime}$ is rational, all these spaces are zero. Therefore, $Y^{\prime}$, $M$ and the section $s \in H^0(M^2)$ all lift compatively to $W_2(k)$. Let $Y^{\prime}_2$, $M_2$ and $s_2$ be the liftings of $Y^{\prime}$, $M$ and $s$, respectively. Then
\[
X^{\prime}_2=\mathrm{Spec}\left( \mathcal{O}_{Y_2^{\prime}}\oplus M_2^{-1}\right)
\]
is a lifting of $X^{\prime}$ over $W_2(k)$. Next I will show that $X$ lifts over $W_2(k)$ too. Let $X_2$ be the ringed space $(X,f_{\ast}\mathcal{O}_{X^{\prime}_2})$. Then $X_2$ is a deformation of $X$ over $W_2(k)$. Indeed. Recall~\cite{EV92} that $W_2(K)$ is $k\oplus k\cdot p$ as an additive group, with ring structure given by
\[
(x_1+y_1\cdot p)(x_2+y_2\cdot p)=x_1x_2+(x_1y_2+x_2y_1)\cdot p.
\]
Then one can easily see that there is an exact sequence
\[
0 \rightarrow k \stackrel{\sigma}{\rightarrow} W_2(k) \rightarrow k \rightarrow 0,
\]
where $\sigma(x)=x\cdot p$. Then tensoring with $\mathcal{O}_{X^{\prime}_2}$ we get the exact sequence
\[
0 \rightarrow \mathcal{O}_{X^{\prime}} \stackrel{\sigma}{\rightarrow} \mathcal{O}_{X^{\prime}_2} \rightarrow \mathcal{O}_{X^{\prime}} \rightarrow 0
\]
Applying $f_{\ast}$ we get the exact sequence
\[
0 \rightarrow f_{\ast}\mathcal{O}_{X^{\prime}} \rightarrow f_{\ast}\mathcal{O}_{X^{\prime}_2} \rightarrow f_{\ast}\mathcal{O}_{X^{\prime}} \rightarrow R^1f_{\ast}\mathcal{O}_{X^{\prime}}
\]
Considering that $f_{\ast}\mathcal{O}_{X^{\prime}}\cong \mathcal{O}_X$ and that $R^1f_{\ast}\mathcal{O}_X=0$, we get the following exact sequence
\[
0 \rightarrow \mathcal{O}_X \stackrel{p}{\rightarrow} f_{\ast}\mathcal{O}_{X_2^{\prime}} \rightarrow \mathcal{O}_X \rightarrow 0.
\]
Now tensoring with $k$ over $W_2(k)$ we get that $\mathcal{O}_{X^{\prime}_2}\otimes_{W_2(k)}k\cong \mathcal{O}_{X}$. Finally from the infinitesimal criterion of flatness, $X_2$ is flat over $W_2(k)$ and hence $X_2$ is a deformation of $X$ over $W_2(k)$, as claimed.

\end{proof}
\section{Surfaces with vector fields of addditive type in characteristic 2.}\label{sec-5}

The purpose of this section is to study smooth canonically polarized surfaces which admit nontrivial global vector fields of additive type. This case is more complicated  from the multiplicative case essentially because $\alpha_2$ actions are harder to describe than $\mu_2$ actions. One of the  difficulties is that the singularities of the quotient of the surface with the induced $\alpha_2$ action are more complicated than those that appear in the multiplicative case. In fact, not only they are not necessarily canonical, but they may not even be rational. However, from Proposition~\ref{prop4}, they are Gorenstein. 

The main result of this section is the following.

\begin{theorem}\label{additive-type}
Let $X$ be a smooth canonically polarized surface defined over an algebraically closed field of characteristic 2. Suppose that $X$ has a nontrivial global vector field of additive type. Then $\alpha_2$ is a sub group scheme of $\mathrm{Aut}(X)$ and one of the following happens.
\begin{enumerate}
\item $K_X^2 \geq 3$.
\item $K_X^2=2$ and $X$ is uniruled. Moreover, if $\chi(\mathcal{O}_X)\geq 2$, then $X$ is unirational and $\pi_1^{et}(X)=\{1\}$.
\item $K_X^2=1$, $\pi_1^{et}(X)=\{1\}$, $p_g(X) \leq 1$ and $X$ is unirational. 
\end{enumerate}
Moreover, if $1 \leq K_X^2\leq 2$, then $X$ is the quotient of a ruled or rational surface (maybe singular) by a rational vector field.
\end{theorem}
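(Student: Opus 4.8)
The plan is to mirror the proof of Theorem~\ref{mult-type}, studying the quotient of $X$ by the induced $\alpha_2$-action and arguing case by case on the Kodaira dimension of its resolution, but now allowing for the fact that an $\alpha_2$-quotient may have far worse singularities than the $A_1$-points produced by a $\mu_2$-quotient. By Proposition~\ref{prop2} the additive vector field $D$ induces a nontrivial $\alpha_2$-action, so $\alpha_2$ is a subgroup scheme of $\mathrm{Aut}(X)$; let $\pi\colon X\to Y$ be the quotient. By Proposition~\ref{prop4} the surface $Y$ is Gorenstein and fits in a diagram $f\colon X'\to X$ (successive blow ups of the isolated singular points of $D$), $\pi'\colon X'\to Y'$ (the quotient of $X'$, with $Y'$ smooth) and $g\colon Y'\to Y$, and by Proposition~\ref{structure-of-pi} the map $\pi$ is a torsor in codimension $2$. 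This yields a reflexive $L=\mathcal{O}_Y(C)$ with $K_X=\pi^{\ast}(K_Y+C)$, $\pi^{\ast}C=\Delta$ (the divisorial part of the fixed locus), $K_Y+C$ ample, $C$ effective with $2C$ Cartier (Proposition~\ref{prop3}), and the identity $K_X^2=2(K_Y+C)^2$. The two structural differences from the multiplicative case that I must keep in mind throughout are that $Y'$ need \emph{not} be the minimal resolution of $Y$ (the divisorial part of $D$ can be an integral divisor) and that $Y$ may be neither canonical nor even rational, so $g$ need not be crepant. Write $Z$ for the minimal model of $Y'$ and set $k(Y):=k(Z)$.

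When $k(Y)=2$ or $k(Y)=1$ the target is only conclusion (1), $K_X^2\geq 3$. Expanding $K_X^2=2K_Y\cdot(K_Y+C)+2C\cdot(K_Y+C)$, the second term is $\geq 1$ because $2C$ is Cartier, effective and nonzero (the subcase $C=0$ forces $\Delta=0$ and, as in the multiplicative proof, can occur only when $k(Y)=2$, where it is handled separately via ampleness of $K_Y$). For the first term I would, exactly as in Theorem~\ref{mult-type}, use ampleness of $K_Y+C$ together with the nef divisor $\phi^{\ast}K_Z$ pulled back along the elliptic or quasi-elliptic fibration of $Z$ when $k(Y)=1$. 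The reason only $K_X^2\geq 3$ (rather than $\geq 5$) survives is that the decomposition $K_{Y'}=g^{\ast}K_Y+F$ with $F$ effective is no longer guaranteed once $Y$ has negative discrepancies, so the positivity squeezed out of the first summand is weaker.

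The substance of the theorem is in $k(Y)=0$ and $k(Y)=-\infty$. When $k(Y)=-\infty$, $Z$ is ruled over a curve $B$, so $X$—being purely inseparably dominated by a surface birational to $Z$—is uniruled; and when $b_1(X)=0$ (forced by Lemma~\ref{b1} for $K_X^2=1$, and under the displayed $\chi$-hypotheses for $K_X^2=2$) one gets $B=\mathbb{P}^1$, whence $X$ is unirational and, by invariance of $\pi_1^{et}$ under purely inseparable and birational maps, $\pi_1^{et}(X)=\pi_1^{et}(Z)=\{1\}$. When $k(Y)=0$, $Z$ is one of the minimal surfaces of Kodaira dimension $0$; here uniruledness of $X$ is not automatic, and I would recover it, as in the K3 analysis of Theorem~\ref{mult-type}, by showing that the many isolated fixed points of $D$ produce a special configuration on $Z$ forcing unirationality. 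To pin down $K_X^2\in\{1,2\}$, the euler-characteristic constraints, and the bound $p_g(X)\leq 1$ (read off from $\omega_X=\pi^{\ast}(\omega_Y\otimes L^{-1})$ and the resulting control of $H^0$ on $Y$), I would redo the Noether-formula and $c_2$ bookkeeping of the multiplicative proof, using Proposition~\ref{size-of-sing} to count isolated fixed points; the new ingredient is that $\chi(\mathcal{O}_Y)-\chi(\mathcal{O}_{Y'})$ equals the sum of the lengths of $R^1g_{\ast}\mathcal{O}_{Y'}$ over the non-rational singular points, a defect I must bound using the local classification of $\alpha_2$-quotient singularities in Proposition~\ref{prop4}.

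The hardest point, I expect, is precisely this loss of rationality of the singularities, which forces two genuine departures from the multiplicative argument. First, one can no longer assume $g$ crepant or $Y'$ minimal, so the identities relating $c_2(X)$, $c_2(Y')$ and $c_2(Z)$ acquire correction terms that must be estimated rather than read off directly. Second, and more seriously, the $W_2(k)$-lifting argument that killed all vector fields once $b_1(X)=0$ in the multiplicative case does not transfer: there the $\mu_2$-torsor came from a \emph{split} extension determined by a section of a line bundle on the rational surface $Y'$, all of whose deformation obstructions vanish, whereas the $\alpha_2$-torsor corresponds to the \emph{non-split} extension $0\to\mathcal{O}_{Y'}\to\pi'_{\ast}\mathcal{O}_{X'}\to M^{-1}\to 0$ of Corollary~\ref{sec3-cor-1}, whose class need not lift compatibly to $W_2(k)$ even for rational $Y'$. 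Hence Corollary~\ref{lifts-to-zero} cannot be invoked to produce a contradiction, which is exactly why $K_X^2=1$ survives in the additive case and must be described positively rather than excluded. Finally, the closing assertion for $1\leq K_X^2\leq 2$ follows as in the multiplicative case: uniruledness of $X$ makes $Y$ and its Frobenius twist $Y_{(2)}$ (uni)ruled, and factoring Frobenius as $Y_{(2)}\to X\to Y$ exhibits $X$ as a purely inseparable degree-$2$ quotient of a ruled or rational surface by a rational vector field.
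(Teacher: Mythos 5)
Your overall architecture coincides with the paper's (quotient by the $\alpha_2$-action, case analysis on $k(Y)$, Noether/$c_2$ bookkeeping, and the correct observation that the $W_2(k)$-lifting trick of the multiplicative Case~4 is unavailable because the $\alpha_2$-torsor corresponds to a non-split extension). But at the two places where the theorem actually requires work, your plan either relies on a step the paper explicitly shows does not transfer, or replaces a long argument with an assertion that does not hold. First, the subcase $\Delta=0$ (equivalently $C=0$): you claim it ``can occur only when $k(Y)=2$, as in the multiplicative proof, where it is handled via ampleness of $K_Y$.'' The multiplicative reduction to $k(Y)=2$ used that $Y$ has only $A_1$ (hence rational) singularities, so that $K_{Y'}=g^{\ast}K_Y$ is nef and big; in the additive case $Y$ may have non-rational singularities and the paper states explicitly that one cannot rule out $\Delta=0$ with $k(Y)<2$. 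Moreover, even granting $K_Y$ ample and Cartier, $K_X^2=2K_Y^2$ only gives $K_X^2\geq 2$, which is not conclusion (1). The paper closes this gap with a different device: factoring the geometric Frobenius as $X\to Y\to X^{(2)}$ exhibits $Y$ as a torsor over $X^{(2)}$, whence $K_X=F^{\ast}(K_{X^{(2)}}+W^{(2)})=2K_X+2W$, so $K_X=-2W$ and $K_X^2=4W^2\geq 4$. Your proposal has no substitute for this.

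Second, the bound $p_g(X)\leq 1$ when $K_X^2=1$ cannot be ``read off from $\omega_X=\pi^{\ast}(\omega_Y\otimes L)$ and control of $H^0$ on $Y$'': nothing a priori bounds $H^0(\omega_Y\otimes L)$ on a rational $Y$ without knowing $C$, and Noether's inequality only gives $p_g(X)\leq 2$. Excluding $p_g(X)=2$ is the bulk of the paper's Case~3: one takes two general members $C_1,C_2\in|K_X|$ meeting transversally at a single point $P$, shows they descend to smooth rational curves $\tilde C_i\in|K_Y+C|$ (which requires proving $H^1(\mathcal{O}_Y)=H^2(\mathcal{O}_Y(-K_Y-C))=0$, itself a delicate point since $Y$ need not have rational singularities), deduces that $Y$ has exactly one singular point and that it is of type $A_1$, derives $K_X\cdot\Delta+\Delta^2+c_2(X)=1$, and then runs a case-by-case analysis on the cycle structure of $\Delta$ against the genus formula, with the final case $\Delta=3\Delta_1+2\Delta_2$ needing a separate geometric contradiction via the discrepancy of the $A_1$ point. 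A similar omission occurs in your $k(Y)=0$ case, where excluding the Enriques possibility and the values $\chi(\mathcal{O}_X)\in\{2,3\}$ requires the integral-curve dichotomy $(\pi')^{\ast}F=E$ or $2E$ and the resulting Diophantine contradictions such as $-8=1-4a^2$. As it stands the proposal is a correct strategic outline with the same skeleton as the paper, but the decisive estimates are missing or rest on steps that fail in the additive setting.
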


From the following theorem it immediately follows that.

\begin{corollary}\label{cor-additive-type}
Let $X$ be a smooth canonically polarized surface defined over an algebraically closed field of characteristic 2. Suppose that either $K_X^2=2$ and $X$ is not uniruled or $K_X^2=1$ and one of the following happens
\begin{enumerate}
\item $p_g(X)=2$.
\item $\chi(\mathcal{O}_X) =3$.
\item $X$ is a singular Godeaux (this means that the action of the Frobenious on $H^1(\mathcal{O}_X)$ is zero).
\item $\pi_1^{et}(X) \not= \{1\}$.
\item $X$ is not unirational.
\end{enumerate}
Then $X$ has no nontrivial global vector field of additive type. In particular, $\alpha_2$ is not a subgroup scheme of $\mathrm{Aut}(X)$.
\end{corollary}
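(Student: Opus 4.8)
The plan is to argue by contradiction: assume $X$ carries a nontrivial global vector field $D$ of additive type, and then invoke Theorem~\ref{additive-type} to place $X$ in one of its three numbered alternatives, contradicting each hypothesis of the corollary in turn. Every hypothesis listed forces $K_X^2 \in \{1,2\}$, so alternative (1) of the theorem (which requires $K_X^2 \geq 3$) is never available, and $X$ must satisfy alternative (2) or (3). Once the nonexistence of $D$ is established, the final clause about $\alpha_2$ is immediate from the equivalence between additive vector fields and $\alpha_2$-subgroup schemes recorded in Corollary~\ref{subgroup-of-aut}.

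First I would dispose of the case $K_X^2 = 2$ with $X$ not uniruled. Here $K_X^2 = 2$ excludes alternatives (1) and (3), so $X$ must lie in alternative (2); but that alternative asserts precisely that $X$ is uniruled, contradicting the hypothesis. For the remaining hypotheses $K_X^2 = 1$, so $X$ is forced into alternative (3), yielding $\pi_1^{et}(X) = \{1\}$, $p_g(X) \leq 1$ and $X$ unirational. Hypotheses (4) and (5) contradict the last two conclusions directly. Hypothesis (1), $p_g(X) = 2$, contradicts $p_g(X) \leq 1$. For hypothesis (2) I would combine $\chi(\mathcal{O}_X) = 1 - h^1(\mathcal{O}_X) + p_g(X)$ with $h^1(\mathcal{O}_X) \geq 0$ to get $p_g(X) = 2 + h^1(\mathcal{O}_X) \geq 2$, again contradicting $p_g(X) \leq 1$.

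The one step requiring more than a direct comparison is hypothesis (3). Here I would convert the singular Godeaux condition, namely $\mu_2 \subset \mathrm{Pic}^{\tau}(X)$, into geometry through the correspondence $\mathrm{Hom}(G,\mathrm{Pic}^{\tau}(X)) \cong H^1_{fl}(X,G^{\ast})$ recalled in Section~\ref{sec-0}: taking $G = \mu_2$, so that $G^{\ast} = \mathbb{Z}/2\mathbb{Z}$, the inclusion of $\mu_2$ corresponds to a nonzero class in $H^1_{fl}(X,\mathbb{Z}/2\mathbb{Z}) = H^1_{et}(X,\mathbb{Z}/2\mathbb{Z})$, hence to a nontrivial $\mathbb{Z}/2\mathbb{Z}$-torsor, i.e. a connected étale double cover of $X$. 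This forces $\pi_1^{et}(X) \neq \{1\}$, contradicting the conclusion of alternative (3). This translation of the singularity condition into a statement about the étale fundamental group (using the definition of singular Godeaux surface via $\mathbb{Z}/2\mathbb{Z}$-torsors given in Section~\ref{sec-0}) is the main, though still minor, obstacle; everything else reduces to a one-line comparison against the three alternatives of Theorem~\ref{additive-type}.
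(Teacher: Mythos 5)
Your proposal is correct and is exactly the intended deduction: the paper offers no written argument beyond ``it immediately follows from Theorem~\ref{additive-type},'' and your case-by-case contradiction supplies precisely the routine details, including the two slightly less immediate ones --- $\chi(\mathcal{O}_X)=3$ forcing $p_g(X)\geq 2$ via $\chi(\mathcal{O}_X)=1-h^1(\mathcal{O}_X)+p_g(X)$, and the translation of ``singular Godeaux'' into a nontrivial \'etale $\mathbb{Z}/2\mathbb{Z}$-cover contradicting $\pi_1^{et}(X)=\{1\}$, which is the same mechanism the paper itself uses (via the Artin--Schreier sequence) in the proof of Theorem~\ref{mult-type}. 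No further comment is needed.
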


\begin{proof}[Proof of Theorem~\ref{additive-type}]
The proof will be along the lines of the proof of Theorem~\ref{mult-type}. However, there are many differences between the two cases that complicate things. Suppose that $X$ admits a nontrivial global vector field $D$ of additive type.  Then $D$ induces a nontrivial $\alpha_2$ action on $X$. Let $\pi \colon X \rightarrow Y$ be the quotient. Then by Proposition~\ref{prop4}, $Y$ is normal and its local class groups are 2-torsion. Moreover, there is a commutative diagram
\begin{gather}\label{sec5-diagram-1}
\xymatrix{
 & X^{\prime\prime}\ar[r]^{f^{\prime\prime}} \ar[d]_{\pi^{\prime\prime}} & X \ar[d]^{\pi} \\
 & Y^{\prime\prime} \ar[r]^{g^{\prime\prime}} & Y \\
}
\end{gather}
such that: $f^{\prime\prime}$ is a resolution of the isolated singularities of $D$ through successive blow ups of its isolated singular points. $X^{\prime\prime}$ is smooth, $D$ lifts to a vector field $D^{\prime\prime}$ in $X^{\prime\prime}$ with only divisorial singularities and $Y^{\prime\prime}$ is the quotient of $X^{\prime\prime}$ by the corresponding action of $\alpha_2$. However, unlike the multiplicative case the singularities of $Y$ are not necessarily canonical and $Y^{\prime\prime}$ may not be the minimal resolution of $Y$. 

Let $g \colon Y^{\prime}\rightarrow Y$ be the minimal resolution. Then there is a factorization
\[
\xymatrix{
     &  Y^{\prime}\ar[dr]^{g} & \\
Y^{\prime\prime}\ar[ur]^{g^{\prime}} \ar[rr]^{g^{\prime\prime}}  & & Y
}
\]
Let $F$ be the reduced $g^{\prime}$-exceptional set and $F^{\prime}=(\pi^{\prime\prime})^{\ast}(F)_{red} \subset E$, where $E$ is the $f$-exceptional set. Then by~\cite{Art62},~\cite{Art66},  $F^{\prime}$ can be contracted over $X$ to a finite set of rational singularities. Then there is a factorization
\[
\xymatrix{
     &  X^{\prime}\ar[dr]^{f} & \\
X^{\prime\prime}\ar[ur]^{f^{\prime}} \ar[rr]^{f^{\prime\prime}} & & X
}
\]
where $f^{\prime}$ is the contraction of $F^{\prime}$. Then $D^{\prime\prime}$ descends to a vector field $D^{\prime}$ on $X^{\prime}$.  Let $\pi^{\prime} \colon X^{\prime} \rightarrow \bar{Y}$ be the quotient. Then it is easy to see that there is a commutative diagram
\begin{gather*}
\xymatrix{
 & X^{\prime\prime}\ar[r]^{f^{\prime}} \ar[d]_{\pi^{\prime\prime}} & X^{\prime} \ar[d]^{\bar{\pi}} \\
 & Y^{\prime\prime} \ar[r]^{\bar{g}} & \bar{Y} \\
}
\end{gather*}
From the commutativity of the above diagram it follows that $\bar{g}$ contracts $F$. Therefore, $\bar{Y}$ is the minimal resolution $Y^{\prime}$ of $Y^{\prime\prime}$. Moreover, since the maps $f^{\prime}$ and $g^{\prime}$ are over $X$ and $Y$ respectively, the previous diagram gives the following commutative diagram
\begin{gather}\label{sec5-diagram-2}
\xymatrix{
 & X^{\prime}\ar[r]^f \ar[d]^{\pi^{\prime}} & X \ar[d]^{\pi} \\
Z & Y^{\prime} \ar[l]_{\phi}\ar[r]^g & Y \\
}
\end{gather}
where now $Y^{\prime}$ is the minimal resolution of $Y$ and $Z$ its minimal model. However, $X^{\prime}$ may now be singular. In any case though $X^{\prime}$ has rational singularities and the $f$ and $g$ exceptional sets are trees of smooth rational curves. Suppose that
\begin{gather}\label{sec5-eq-1}
K_{Y^{\prime}}=g^{\ast} K_Y -F_1\\
K_{Y^{\prime}}=\phi^{\ast}K_Z+F_2 \nonumber
\end{gather}
where $F_1$, $F_2$ are effective $g$ and $\phi$-exceptional divisors, reespectively. Let $\Delta$ be the divisorial part of $D$. Unlike the multiplicative case, $\Delta$ may be singular, nonreduced and it may even contain isolated fixed points of $D$. By adjunction for purely inseparable morphisms~\cite{Ek87}~\cite{R-S76},
\begin{gather}\label{sec5-eq-2}
K_X=\pi^{\ast}K_Y+\Delta.
\end{gather}
Moreover, from Proposition~\ref{structure-of-pi}, $\pi$ is a torsor over a codimension 2 open subset of $Y$. Moreover, since $Y^{\prime}$ is smooth and $X^{\prime}$ is normal, $\pi^{\prime}$ is a torsor too. In particular $X^{\prime}$ has hypersurface singularities and hence $K_{X^{\prime}}$ is Cartier. Moreover,  by Proposition~\ref{structure-of-pi}, there are exact sequence
\begin{gather}\label{sec5-eq-3}
0 \rightarrow \mathcal{O}_Y \rightarrow E \rightarrow L^{-1} \rightarrow 0 \\
0 \rightarrow \mathcal{O}_{Y^{\prime}} \rightarrow E^{\prime} \rightarrow M^{-1} \rightarrow 0 \nonumber
\end{gather}
where $E=\pi_{\ast}\mathcal{O}_X$, $E^{\prime}=\pi^{\prime}_{\ast}\mathcal{O}_{X^{\prime}}$, $L=\mathcal{O}(C)$ is a reflexive sheaf on $Y$ and $M=\mathcal{O}_{Y^{\prime}}(C^{\prime})$ is an invertible sheaf on $Y^{\prime}$. Moreover,
\begin{gather}\label{sec5-eq-4}
K_X=\pi^{\ast}(K_Y+C)\\
K_{X^{\prime}}=(\pi^{\prime})^{\ast}(K_{Y^{\prime}}+C^{\prime})
\end{gather}
From this and~\ref{sec5-eq-2} it follows that $\pi^{\ast}C=\Delta$. Moreover, since $K_X$ is ample and $\pi$ a finite morphism, it follows that $K_Y+C$ is ample too.

Finally, from~\ref{sec5-eq-4} and the fact that $\pi$ is finite of degree 2 it follows that
\begin{gather}\label{sec4-eq-5}
K_X^2=2(K_Y+C)^2=2K_Y\cdot (K_Y+C) +2C\cdot (K_Y+C).
\end{gather}

As in the multiplicative case, the proof of the Theorem~\ref{additive-type} will be in several steps, according to the Kodaira dimension $k(Y)$ of $Y$.

\textbf{Case 1.} Suppose $k(Y)=2$ or $k(Y)=1$. In this case I will show that $K_Y^2 \geq 3$. Suppose that $\Delta \not=0$. Then arguing similarly as in Cases 1 and 2 in the proof of Theorem~\ref{mult-type}, we get that $K_X^2 \geq 3$. 

Suppose that $\Delta=0$. Then $K_X=\pi^{\ast}K_Y$ and hence $K_Y$ is ample. In the case of vector fields of multiplicative type, if this happenned then $k(Y)=2$. This happenned because $Y$ had singularities of type $A_1$, in particular rational. If we knew that $Y$ had rational singularities it would be possible to show that $k(Y)=2$ again by comparing $H^0(\omega_Y^{[n]})$ and $H^0(\omega_{Y^{\prime}}^n)$. However, $Y$ may have non rational singularities in the additive case so in principle it could happen that $\Delta=0$ and $k(Y)< k(X)$.

Next I will show that in any case, $K_X^2\geq 4$. By its construction, $\pi$ factors through the geometric Frobenious $F \colon X \rightarrow X^{(2)}$. In fact there is a commutative diagram
\[
\xymatrix{
     & Y \ar[dr]^{\nu} \\
X \ar[ur]^{\pi}\ar[rr]^F & & X^{(2)}
}
\]
Since $X^{(2)}$ is smooth and $Y$ is normal, then $\nu$ is a torsor over $X^{(2)}$~\cite{Ek87}. Therefore, \[
K_Y=\nu^{\ast}(K_{X^{(2)}}+W^{(2)})
\]
where $W^{(2)}$ is a divisor on $X^{(2)}$. Recall that the geometric Frobenious is constructed from the next commutative diagram
\[
\xymatrix{
X  \ar[drr]^{F_{ab}}\ar[ddr] \ar[dr]^F     &                                 &        \\     
         &   X^{(2)} \ar[r]^{pr_1}\ar[d]^{pr_2}                      &       X\ar[d]^{\pi}   \\
         &   \mathrm{Spec}(k) \ar[r]^{F_{ab}}             &   \mathrm{Spec}(k)
}
\]
where $F_{ab}$ is the absolute Frobenious. Since $k$ is algebraically closed, $pr_1$ is an isomorphism. Hence $W^{(2)}=pr_1^{\ast}W$, where $W$ is a divisor on $X$. Then 
\[
K_X=\pi^{\ast}K_Y=\pi^{\ast}\nu^{\ast}(K_{X^{(2)}}+W^{(2)})=F^{\ast}(K_{X^{(2)}}+W^{(2)})=F^{\ast}_{ab}(K_X+W)=2K_X+2W.
\]
Therefore $K_X=-2W$ and hence $K_X^2 =4W^2 \geq 4$, as claimed.

\textbf{Case 2.} Suppose that $k(Y)=0$. In this I will show that one of the following happens
\begin{enumerate}
\item $K_X^2 \geq 3$.
\item $1\leq K_X^2\leq 2$, $X$ is unirational and $\pi_1^{et}(X)=\{1\}$. In particular, if $K_X^2=1$, then $X$ is a simply connected supersingular Godeaux surface. 
\end{enumerate}

I will only prove the statement for the case when $K_X^2=1$. The proof for case $K_X^2=2$ is similar with minor differences and it is left to the reader.

Suppose then that $K_X^2=1$. Similar arguments as in the cases $k(Y)=2$ and $k(Y)=1$ show that if $\Delta=0$ then $K_X^2 \geq 4$.

Suppose that $Y^{\prime}$, in diagram~\ref{sec5-diagram-2} is not minimal. Then similar arguments as in the multiplicative case give that $K_X^2 \geq 3$. 

Suppose now that $Y^{\prime}$ is minimal. The argument of the multiplicative case used in an essential way the fact that $Y$ has singularities of type $A_1$ and cannot be used in this case directly.  

Suppose that $K_X^2=1$. Then it is known~\cite{Li09} that $1\leq \chi(\mathcal{O}_X) \leq 3$ ($0\leq \chi(\mathcal{O}_X) \leq 4$, if $K_X^2=2$~\cite[Corollary1.8]{Ek87}). Hence in order to show the claim it suffices to show that the cases $\chi(\mathcal{O}_X)\in\{2,3\}$ is impossible, that $\pi_1^{et}(X)=\{1\}$ and that $X$ is unirational and not singular. FromLemma~\ref{b1} it follows that $b_1(X)=0$ and hence $b_1(Y^{\prime})=b_1(Y)=b_1(X)=0$. Hence $Y^{\prime}$ is either an Enriques or a K3 surface and hence $c_2(Y^{\prime})=12$, if $Y^{\prime}$ is Enriques, and 24 if it is K3.

From diagram~\ref{sec5-diagram-2} it follows that 
\begin{gather}\label{sec5-eq-6}
c_2(Y^{\prime})=\chi_{et}(Y^{\prime})=\chi_{et}(X^{\prime})=c_2(X)+k,
\end{gather}
where $k$ is the number of $f$-exceptional curves.

Suppose that $\chi(\mathcal{O}_X)=3$. Then from Noethers formula we get that $c_2(X)=35$ and from~\ref{sec5-eq-6} that $c_2(Y^{\prime})=35+k>24$. Hence this case is impossible.

Suppose that $\chi(\mathcal{O}_X)=2$. Then from Noethers formula we get that $c_2(X)=23$. Then $c_2(Y^{\prime})=c_2(X)+k=23+k$. Hence the only possibility is that $Y^{\prime}$ is a K3 and $k=1$. Then I claim that $Y^{\prime}$ has exactly one singular point which must be canonical of type $A_1$. $Y$ has canonical singularities since $K_{Y^{\prime}}=0$. Let $E$ and $F$ be the $f$ and $g$-exceptional curves, respectively. Both are smooth rational curves. Then since $K_{Y^{\prime}}=0$, it follows that $F^2=-2$ and so $Y$ has exactly one $A_1$ singular point (if $K_X^2=2$ then $k=2$ and $Y$ has canonical singularities whose minimal resolution has two exceptional curves. Then by Proposition~\ref{prop4}, $Y$ has exactly two $A_1$ singular points).

Let $\Delta$ be the divisorial part of $D$ and $\Delta^{\prime}$ the divisorial part of $D^{\prime}$, the lifting of $D$ on $X^{\prime}$. Then since $Y^{\prime}$ is K3, $K_X=\Delta$ and $K_{X^{\prime}}=\Delta^{\prime}$. Moreover $\Delta^{\prime}=(\pi^{\prime})^{\ast}M$, where $M$ is as in equations~\ref{sec5-eq-3}. From~\ref{sec5-eq-3} it follows that
\[
\chi(M^{-1})=\chi(E^{\prime})-\chi(\mathcal{O}_{Y^{\prime}})=\chi(\mathcal{O}_{X^{\prime}})-\chi(\mathcal{O}_{Y^{\prime}})=2-2=0.
\]
Now Riemann-Roch gives that
\[
0=\chi(M^{-1})=\chi(\mathcal{O}_{Y^{\prime}})+\frac{1}{2}(M^2+M\cdot K_{Y^{\prime}})=2+\frac{1}{2}M^2.
\]
Hence $M^2=-4$ and therefore, since $\Delta^{\prime}=(\pi^{\prime})^{\ast}M$, $K_{X^{\prime}}^2=(\Delta^{\prime})^2=-8$. Since $X$ is smooth and $K_{X^{\prime}}$ Cartier, there is a positive $a\in \mathbb{Z}$ such that 
\begin{gather}\label{sec5-eq-7}
K_{X^{\prime}}=f^{\ast}K_X+aE.
\end{gather}
Now $E$ may or may not be an integral curve for $D^{\prime}$. If it is an integral curve, then $(\pi^{\prime})^{\ast}F=E$ and hence $E^2=-4$. Then from~\ref{sec5-eq-7} we get that 
\begin{gather}\label{sec5-eq-8}
-8=K_{X^{\prime}}^2=K_X^2-4a^2=1-4a^2,
\end{gather}
which is impossible. Suppose that $E$ is not an integral curve for $D^{\prime}$. Then $2E=(\pi^{\prime})^{\ast}F$ and hence $E^2=-1$. Then from~\ref{sec5-eq-8} it follows that $a=3$ and hence $K_{X^{\prime}}=f^{\ast}K_X+3E$. Hence $3E$ is Cartier. But since $2E=(\pi^{\prime})^{\ast}F$ and $Y^{\prime}$ is smooth, it follows that $2E$ is Cartier as well. Hence $E$ is Cartier. But since $E=\mathbb{P}^1$ it follows that $X^{\prime}$ is in fact smooth and $f$ is the contraction of a $-1$ curve. But then $K_{X^{\prime}}=f^{\ast}K_X+E$, a contradiction. Hence the case $\chi(\mathcal{O}_X)=2$ is impossible too. Hence $\chi(\mathcal{O}_X)=1$ and therefore $X$ is a Godeaux surface. 

Next I will show that $Y^{\prime}$ is a K3 surface with a special configuration~\cite{SB96} of rank 13. 

Again as before we find that
\begin{gather}\label{sec5-eq-9}
c_2(Y^{\prime})=c_2(X)+k=11+k,
\end{gather}
where $k$ is the number of $f$-exceptional curves. If $Y^{\prime}$ was Enriques, then $c_2(Y^{\prime})=12$ and hence $k=1$. I now repeat the previous argument. Exactly as before we get that $K_{X^{\prime}}^2=-4$. Suppose that $(\pi^{\prime})^{\ast}F=E$. Then $E^2=-4$ and hence from~\ref{sec5-eq-8} we get that $-4=1-4a^2$, which is impossible. If on the other hand $(\pi^{\prime})^{\ast}F=2E$, then $E^2=-1$ and hence again from~\ref{sec5-eq-8} we get that $-4=1-a^2$ and hence $a^2=5$, which is again impossible since $a\in \mathbb{Z}$. Hence $Y^{\prime}$ is a $K3$ surface and therefore $c_2(Y^{\prime})=24$. Then from~\ref{sec5-eq-9} it follows that $k=13$ and hence $g$ has exactly 13 exceptional curves. Moreover, $Y$ has canonical singularities and therefore by~\ref{prop4} they must be of type either $A_1$ or $D_{2n}$. Hence by~\cite{SB96}, $Y^{\prime}$ has a special configuration of rank 13. Then $Y^{\prime}$ is unirational~\cite{SB96} and hence $X$  is unirational as well. Moreover considering that the \'
etale fundamental group is a birational invariant between smooth varieties and also invariant under purely inseparable finite maps, we get from~\ref{sec5-diagram-1} that 
\[
\pi_1^{et}(X)=\pi_1^{et}(X^{\prime\prime})=\pi_1^{et}(Y^{\prime\prime})=\pi_1^{et}(Y^{\prime})=\{1\}.
\]
Finally, I will show that $X$ is supersingular. This means that $\mathrm{p}_g(X)=h^1(\mathcal{O}_X)=1$ and the induced map $F^{\ast}$ of the Frobenious on $H^1(\mathcal{O}_X)$ is zero. Since $\omega_Y\cong \mathcal{O}_Y$ we get from duality for finite morphisms~\cite{Ha77} that $\omega_X=\pi^{!}\mathcal{O}_Y$. Hence
\[
H^0(\omega_X)=\mathrm{Hom}_Y(\pi_{\ast}\omega_X,\mathcal{O}_Y).
\]
But this is nonzero since the map $\phi \colon \pi_{\ast}\omega_X \rightarrow \mathcal{O}_Y$ defined by $\phi(a)=Da$ is nonzero and $\mathcal{O}_Y$-linear. Then from~\cite[Corollary 1.8]{Ek87} it follows that $\mathrm{p}_g(X)=h^1(\mathcal{O}_X)=1$. Finally for exactly the same reasons as in the multiplicative case, $F^{\ast}$ is zero and hence $X$ is supersingular.

\textbf{Case 3.} Suppose that $k(Y)=-\infty$. In this case I will show the following.
\begin{enumerate}
\item If $K_X^2=2$, then $X$ is uniruled. Moreover, if $\chi(\mathcal{O}_X)\geq 2$, then $X$ is unirational and $\pi_1^{et}(X)=\{1\}$.
\item If $K_X^2=1$ then $X$ is unirational, simply connected and $p_g(X) \leq 1$. In particular 
$1 \leq \chi(\mathcal{O}_X) \leq 2$.
\end{enumerate}

This case is very different from the corresponding multiplicative case where the core of the argument was that if $K_X^2 \leq 2$ then $X$ lifts to characteristic zero and hence Kodaira-Nakano vanishing holds which implies that $X$ does not have any nonzero global vector fields. The proof that $X$ lifts to characteristic zero was based on showing that every space and map that appears in~\ref{sec5-diagram-2} lifts to charracteristic zero.  However, even though in this case too $Y^{\prime}$ lifts to characteristic zero, the construction of $X^{\prime}$ as a torsor over $Y^{\prime}$ depends heavily on being in positive characteristic and does not necessarily lift to characteristic zero.

The first part of the claim is obvious. Only the statement that if $\chi(\mathcal{O}_X)\geq 2$, then $X$ is unirational and $\pi_1^{et}(X)=\{1\}$ needs some justification. From Lemma~\ref{b1} it follows that if $\chi(\mathcal{O}_X)\geq 2$, then $b_1(X)=0$. Hence $b_1(X^{\prime})=0$ and therefore $X^{\prime}$ is rational and hence $X$ unirational and simply connected.

Suppose in the following that $K_X^2=1$. Fix notation as in diagram~\ref{sec5-diagram-2}. Since $k(Y)=-\infty$, then $Z$ is ruled over a smooth curve $B$. From Lemma~\ref{b1} it follows that $b_1(X)=0$ and hence $B\cong \mathbb{P}^1$. Hence $X$ is unirational and for the same reasons as in the previous cases, $\pi_1^{et}(X)=\pi_1^{et}(Z)=\{1\}$. Hence $X$ is simply connected. It remains to show that $p_g(X) \leq 1$ and that $1 \leq \chi(\mathcal{O}_X) \leq 2$. Since $X$ is canonically polarized, Noether's inequality gives that $K_X^2\geq 2\mathrm{p}_g(X)-4$. Hence if $K_X^2=1$ then $\mathrm{p}_g(X) \leq 2$. Moreover 
from~\cite[Corollary 1.8]{Ek87}, $1\leq \chi(\mathcal{O}_X) \leq 3$. Hence to show the claim it suffices to show that the case $\mathrm{p}_g(X)=2$ is impossible. If this happens, then by~\cite[Corollary 1.8]{Ek87}, then $\chi(\mathcal{O}_X)\in \{2,3\}$.

Suppose then that $\mathrm{p}_g(X)=2$. 

For the same reasons as in Case 1., if $\Delta=0$ then $K_X^2\geq 4$. So we may assume that $\Delta\not=0$.

Next I will show that $Y$ has exactly one singular point which is of type $A_1$. 

Let $C_1, C_2 \in |K_X|$ be two general members. Then since $K_X$ is ample and $K_X \cdot C_i=1$, it follows that $C_i$ are irreducible and reduced curves, $i=1,2$. Moreover, $C_1^2=C_2^2=C_1\cdot C_2=1$. Let $P=C_1 \cap C_2$.

\textbf{Claim.} 
\begin{enumerate}
\item $P$ is a smooth point of both $C_1$ and $C_2$.
\item There are smooth rational curves $\tilde{C}_i \in |K_Y+C|$, $i=1,2$ such that $\pi^{\ast}\tilde{C}_i=C_i$, $i=1,2$.
\item $Q=\pi(P)$ is the unique singular point of $Y$. Moreover, it is of type $A_1$.
\end{enumerate}
Note that $Y$ is necessarily singular since from the equation $K_X=\pi^{\ast}(K_Y+C)$ we get that $(K_Y+C)^2=1/2$.

Let $f \colon W \rightarrow X$ be the blow up of $X$ at $P$. Let $E$ be the $f$-exceptional curve. Then
\begin{gather*}
f^{\ast}C_1=C^{\prime}_1+m_1E\\
f^{\ast}C_2=C^{\prime}_2+m_2E
\end{gather*}
Hence
\[
1=f^{\ast}C_1\cdot f^{\ast}C_2=C_1^{\prime}\cdot f^{\ast}C_2+m_1E\cdot f^{\ast}C_2=C_1^{\prime}\cdot C_2^{\prime}+m_1m_2.
\]
Hence $C_1^{\prime}\cdot C_2^{\prime}=0$ and $m_1=m_2=1$. Therefore $C_1$ and $C_2$ are smooth at $P$. This shows the first part of the claim.

Next I will show that there are $\tilde{C}_i \in |K_Y+C|$ such that $\pi^{\ast}\tilde{C}_i=C_i$, $i=1,2$. In the notation of diagram~\ref{sec5-diagram-1}, $H^0(\omega_{X^{\prime\prime}})=H^0(\omega_X)$. Let then $C_i^{\prime\prime}\in|K_{X^{\prime\prime}}|$ be liftings of $C_i$ in $X^{\prime\prime}$, $i=1,2$. Then I will show that there are $\bar{C}_i\in |K_{Y^{\prime\prime}}+M|$ such that $(\pi^{\prime\prime})^{\ast}\bar{C}_i=C_i^{\prime\prime}$. Then $\tilde{C}_i=h_{\ast}\bar{C}_i \in |K_Y+C|$ and $\pi^{\ast}\tilde{C}_i=C_i$, $i=1,2$. This is because all the previous equations hold over a codimension 2 open subset of $Y$ (its smooth part) and so everywhere. Now from~\ref{sec5-eq-4},
\[
H^0(\omega_{X^{\prime\prime}})=H^0((\pi^{\prime\prime})^{\ast}(\omega_{Y^{\prime\prime}}\otimes M)=H^0(\omega_{Y^{\prime\prime}}\otimes M\otimes \pi^{\prime\prime}_{\ast}\mathcal{O}_{X^{\prime\prime}})
\]
Then from~\ref{sec5-eq-3} we get the exact sequence
\[
0 \rightarrow \omega_{Y^{\prime\prime}}\otimes M \rightarrow \pi^{\prime\prime}_{\ast}\mathcal{O}_{X^{\prime\prime}}\otimes \omega_{Y^{\prime\prime}}\otimes M \rightarrow  \omega_{Y^{\prime\prime}} \rightarrow 0.
\]
This gives an exact sequence in cohomology
\[
0 \rightarrow H^0(\omega_{Y^{\prime\prime}}\otimes M) \rightarrow H^0(\pi^{\prime\prime}_{\ast}\mathcal{O}_{X^{\prime\prime}}\otimes \omega_{Y^{\prime\prime}}\otimes M) \rightarrow  H^0(\omega_{Y^{\prime\prime}}) \rightarrow \cdots
\]
Now since $Y^{\prime\prime}$ is rational, it follows that $H^0(\omega_{Y^{\prime\prime}})=0$. Hence 
\[
H^0(\omega_{Y^{\prime\prime}}\otimes M) = H^0(\pi^{\prime\prime}_{\ast}\mathcal{O}_{X^{\prime\prime}}\otimes \omega_{Y^{\prime\prime}}\otimes M)
\]
and therefore there are $\bar{C}_i\in |K_{Y^{\prime\prime}}+M|$ such that $(\pi^{\prime\prime})^{\ast}\bar{C}_i=C^{\prime\prime}_i$, $i=1,2$. This concludes the proof of the second part of the claim.

I will next show that $\tilde{C}_i\cong \mathbb{P}^1$, $i=1,2$. Since $\tilde{C}_i \in |K_Y+C|$, there exists an exact sequence
\[
0 \rightarrow \mathcal{O}_Y(-K_Y-C) \rightarrow \mathcal{O}_Y \rightarrow \mathcal{O}_{\tilde{C}_i} \rightarrow 0.
\]
This gives an exact sequence in cohomology
\[
\cdots \rightarrow H^1(\mathcal{O}_Y(-K_Y-C)) \rightarrow H^1(\mathcal{O}_Y) \rightarrow H^1(\mathcal{O}_{\tilde{C}_i}) \rightarrow H^2(\mathcal{O}_Y(-K_Y-C)) \cdots 
\]
I will show that $H^1(\mathcal{O}_Y)=H^2(\mathcal{O}_Y(-K_Y-C))=0$. Therefore, $H^1(\mathcal{O}_{\tilde{C}_i})=0$ and hence $\tilde{C}_i\cong \mathbb{P}^1$. Since $Y^{\prime}$ is rational it follows that $H^i(\mathcal{O}_{Y^{\prime}})=0$, $i=1,2$. If $Y$ had rational singularities then the same would hold for $Y$. But $Y$ may have nonrational singularities. 
However, the Leray spectral sequence we get the exact sequence
\[
0 \rightarrow H^1(g_{\ast}\mathcal{O}_{Y^{\prime}}) \rightarrow H^1(\mathcal{O}_{Y^{\prime}}) \rightarrow H^0(R^1g_{\ast}\mathcal{O}_{Y^{\prime}}) \rightarrow H^2(\mathcal{O}_Y^{\prime})
\]
Since $g_{\ast}\mathcal{O}_{Y^{\prime}}=\mathcal{O}_Y$ and $ H^1(\mathcal{O}_{Y^{\prime}})=0$, it follows that $H^1(\mathcal{O}_Y)=0$ as well. Next I will show that $H^2(\mathcal{O}_Y(-K_Y-C))=0$. By Serre duality for Cohen-Macauley sheaves~\cite{KM98}, we get that
\[
H^2(\mathcal{O}_Y(-K_Y-C))=H^0(\mathcal{O}_Y(2K_Y+C)).
\]
Suppose that $H^0(\mathcal{O}_Y(2K_Y+C))\not= 0$. Then there exists a nonzero effective divisor $Z \in |2K_Y+C|$. Then $K_Y+C = Z-K_Y$. Therefore from~\ref{sec5-eq-4} we get that 
\begin{gather}
K_X^2=2(K_Y+C)^2=2K_Y\cdot(K_Y+C) +2C\cdot (K_Y+C) = \label{sec5-eq-10}\\
2Z\cdot (K_Y+C)-2K_Y\cdot (K_Y+C).\nonumber
\end{gather}
Since $K_Y+C$ is ample and $2Z, 2C$ are Cartier, $2Z\cdot (K_Y+C)\geq 1$ and $2C\cdot (K_Y+C) \geq 1$ (since $2C$ is equivalent to $\pi_{\ast}\Delta$, which is effective). 

Suppose that $K_Y \cdot (K_Y+C) <0$. Then from the second equality of~\ref{sec5-eq-10} it follows that $K_X^2 \geq 2$. Suppose that $K_Y \cdot (K_Y+C) >0$. Then from the first equality of~\ref{sec5-eq-10} it follows that $K_X^2\geq 3$. 

Suppose now that $K_Y \cdot (K_Y+C)=0$. Then $\pi^{\ast}K_Y \cdot K_X=0$ and hence from the adjunction $K_X=\pi^{\ast}K_Y+\Delta$ we get that $K_X \cdot \Delta=K_X^2=1$.

Fix notation as in diagram~\ref{sec5-diagram-1}. Then from Corollary~\ref{sec3-cor-1} we get that
\begin{gather*}
K_{X^{\prime\prime}}\cdot \Delta^{\prime\prime}=4\left(\chi(\mathcal{O}_{X^{\prime\prime}})-2\chi(\mathcal{O}_{Y^{\prime\prime}})\right).
\end{gather*}
Since $Y^{\prime\prime}$ is rational, $\chi(\mathcal{O}_{Y^{\prime\prime}})=1$ and hence
\begin{gather}\label{sec5-eq-11}
K_{X^{\prime\prime}}\cdot \Delta^{\prime\prime}=4\left(\chi(\mathcal{O}_{X^{\prime\prime}})-2\right)
\end{gather}
If $\mathrm{p}_g(X)=2$, then either $\chi(\mathcal{O}_X)=3$ or $\chi(\mathcal{O}_X)=2$. Suppose that $\chi(\mathcal{O}_X)=3$. Then from~\ref{sec5-eq-11} it follows that $K_{X^{\prime\prime}}\cdot \Delta^{\prime\prime}=4>1=K_X\cdot \Delta$, which is impossible by Corollary~\ref{sec3-cor-1}.

Suppose that $\chi(\mathcal{O}_X)=2$. In this case, $K_{X^{\prime\prime}}\cdot \Delta^{\prime\prime}=0$.

\begin{claim}\label{sec5-claim} In this case, $Y$ has exactly one singular point which must be of type $A_1$. 
\end{claim}

From Corollary~\ref{sec3-cor-1}, $K_X \cdot \Delta$ decreases from $X$ to $X^{\prime\prime}$. In order to show the claim I will study how exactly $K_X \cdot \Delta$ decreases. Since $f$ is a composition of blow ups of isolated singular points of $D$, it suffices to examine what happens after a single blow up. So let $f_1 \colon X_1 \rightarrow X$ be the blow up of an isolated singular point of $D$. Let $D_1$ be the lifting of $D$ on $X_1$,  $\Delta_1$ its divisorial part and $Y_1$ the quotient of $X_1$ by $D_1$. Then there exists a commutative diagram
\begin{gather}\label{sec5-diagram-4}
\xymatrix{
 & X_1\ar[r]^{f_1} \ar[d]_{\pi_1} & X \ar[d]^{\pi} \\
 & Y_1 \ar[r]^{g_1} & Y \\
}
\end{gather}
Let $E$ be the $f$-exceptional curve and $F=\pi_1(E)$ the $g$-exceptional curve. Then, since $K_Y$ is Cartier, there is $a \in \mathbb{Z}$ such that
\[
K_{Y_1}=g_1^{\ast}K_Y+aF.
\]
Moreover, $K_X=\pi^{\ast}K_Y+\Delta$. Hence from~\ref{sec5-diagram-4} we that
\[
K_{X_1}=f_1^{\ast}K_X+E=f_1^{\ast}\pi^{\ast}K_Y+f_1^{\ast}\Delta+E=\pi_1^{\ast}K_{Y_1}+f_1^{\ast}\Delta+(1-ka)E
\]
where $k\in\{1,2\}$ is such that $\pi_1^{\ast}F=kE$. If $E$ is an integral curve for $D^{\prime}$, then $k=1$. Otherwise $k=2$. Therefore,
\begin{gather}\label{sec5-eq-16}
\Delta_1=f_1^{\ast}\Delta+(1-ka)E.
\end{gather}
From the proof of Proposition~\ref{K-decreases} it follows that $1-ka\geq 0$. If $1-ka>1$, then 
\[
K_{X_1}\cdot \Delta_1=K_X\cdot \Delta -(1-ka)\leq  K_X\cdot \Delta -2.
\]
But considering that $K_X\cdot \Delta =1$ and $K_{X^{\prime\prime}}\cdot \Delta^{\prime\prime}=0$, this cannot happen. Hence $1-ka\in \{0,1\}$.

Suppose that $1-ka=1$. Then $a=0$ and $K_{X_1}\cdot \Delta_1=K_X\cdot \Delta -1$, hence $K_X\cdot \Delta$ drops by one. Again considering that $K_X\cdot \Delta =1$ and $K_{X^{\prime\prime}}\cdot \Delta^{\prime\prime}=0$, this case can happen only once. In all other cases, $K_X\cdot \Delta$ stays the same and hence $1-ka=0$, i.e, $a=1/k$ (and hence $k=1$ since $a\in \mathbb{Z}$). 

In the notation then of~\ref{sec5-diagram-1}, one can write
\[
K_{Y^{\prime\prime}}=h^{\ast}K_Y+\sum_ia_iF_i,
\]
such that $a_i \in \mathbb{Z}$ and there is exactly one $i$ such that $a_i=0$ and $a_j >0$ for all $j\not=i$. Hence $Y$ has canonical singularities. Now consider the commutative diagram
\[
\xymatrix{
                & Y^{\prime} \ar[dr]^{g}      &\\
Y^{\prime\prime} \ar[ur]^{\phi} \ar[rr]^{h} & & Y \\
}
\]  
where $g \colon Y^{\prime} \rightarrow Y$ is the minimal resolution. Then since $h$ has exactly one crepant divisor, $g$ has exactly one crepant divisor too. Therefore since $Y$ has canonical singularities, it follows that $K_Z=g^{\ast}K_Y$ and $g$ contracts exactly one smooth rational curve of self intersection -2. Hence $Y$ has exactly 1 singular point which must be of type $A_1$, as claimed.

Then \[
c_2(Y^{\prime})=\chi_{et}(Y)+1=c_2(X)+1.
\]
If $\chi(\mathcal{O}_X)=2$, then $c_2(X)=23$. Therefore, $c_2(Y^{\prime})=24$. However, since $Y^{\prime}$ is rational, $\chi(\mathcal{O}_{Y^{\prime}})=1$ and hence from Noethers formula
\[
K_Y^2=K_{Y^{\prime}}^2=12-24=-12.
\]
Moreover, since $K_X=\pi^{\ast}K_Y+\Delta$ it follows that
\[
-24=2K_Y^2=(K_X-\Delta)^2=K_X^2+\Delta^2-2K_X\cdot \Delta =-1+\Delta^2
\]
and hence $\Delta^2=-23$. However, since $K_X \cdot \Delta =1$ and $K_X$ is ample, it follows that $\Delta $ is irreducible and reduced. But then from the genus formula
\begin{gather}\label{sec5-eq-222}
2p_a(\Delta)-2=K_X\cdot \Delta +\Delta^2=1-23=-22.
\end{gather}
But this is impossible. Therefore $K_Y\cdot (K_Y+C) \not=0$ and hence
\[
H^0(\mathcal{O}_Y(2K_Y+C))\not= 0,
\]
as claimed. Hence $\tilde{C}_i\cong \mathbb{P}^1$, $i=1,2$. In particular, they are both smooth.

Next I will show that $Q=\pi(P)$, where $P=C_1\cap C_2$ is the only singular point of $Y$. Indeed. $P$ is the only base point of the 2-dimensional linear system $|K_X|$. Hence $Q$ is the only base point of $|K_Y+C|$. Since $\tilde{C}_1+\tilde{C}_2 \in |2K_Y+2C|$, it is Cartier. Moreover, since both $\tilde{C}_1$ and $\tilde{C}_2$ are smooth, it follows that $Y$ is smooth everywhere except $Q$ (as explained earlier $Y$ is singular and therefore cannot be smooth at $Q$). Hence $Y$ has exactly one singular point.

Next I will show that $Q\in Y$ is an $A_1$ point. Let $f_1 \colon X_1 \rightarrow X$ be the blow up of $P$. Let $D_1$ be the lifting of $D$ on $X_1$,  $\Delta_1$ its divisorial part and $Y_1$ the quotient of $X_1$ by $D_1$. Then there exists a commutative diagram
\begin{gather}\label{sec5-diagram-5}
\xymatrix{
 & X_1\ar[r]^{f_1} \ar[d]_{\pi_1} & X \ar[d]^{\pi} \\
 & Y_1 \ar[r]^{g_1} & Y \\
}
\end{gather}
Let $E$ be the $f$-exceptional curve and $F=\pi_1(E)$ the $g$-exceptional curve. Then there is $a \in \mathbb{Z}$ such that $K_{Y_1}=g_1^{\ast}K_Y+aF$. I will show that $Y_1$ is smooth, $a=0$ and $F^2=-2$.

Let $\bar{C}_i$ be the birational transforms of $C_i$ in $Y_1$, $i=1,2$. Then I claim that
\begin{gather}\label{sec5-eq-14}
g^{\ast}\tilde{C}_i=\bar{C}_i+\frac{1}{2}F,
\end{gather}
for $i=1,2$. Indeed. Suppose that $g^{\ast}\tilde{C}_i=\bar{C}_i+m_iF$. Then clearly $\pi_1^{\ast}\bar{C}_i=C_i^{\prime}$. Hence,
\[
C_i^{\prime}+E=f^{\ast}_1C_i=f_1^{\ast}\pi^{\ast}\tilde{C}_i=\pi_1^{\ast}g^{\ast}\tilde{C}_i=C_i^{\prime}+m_i\pi_1^{\ast}F.
\]
Therefore, $m_i\pi_1^{\ast}F=E$. This implies that $m_1=m_2$. Moreover, if $\pi_1^{\ast}F=E$, then $m_1=m_2=1$. If on the other hand, $\pi_1^{\ast}F=2E$, then $m_1=m_2=1/2$. Suppose that $\pi^{\ast}F=E$. Then $F^2=-1/2$. Also, 
\begin{gather}\label{sec5-eq-15}
g^{\ast}\tilde{C}_i=\bar{C}_i+F.
\end{gather}
Since $\tilde{C}_1\sim \tilde{C}_2$, it follows that $\bar{C}_1+F\sim \bar{C}_2+F$ and hence $\bar{C}_1 \sim \bar{C}_2$. Therefore, $\mathcal{O}_{Y_1}(\bar{C}_1)\cong \mathcal{O}_{Y_1}(\bar{C}_1)$. Considering now that $\bar{C}_1\cap \bar{C}_2=\emptyset$ it follows that both $\mathcal{O}_{Y_1}(\bar{C}_i)$, $i=1,2$, are invertible and hence $\bar{C}_1$ and $\bar{C}_2$ are Cartier. In addition. $\bar{C}_i^2=0$, $i=1,2$. But then from~\ref{sec5-eq-15} it follows that
\[
\bar{C}_i \cdot F=-F^2=1/2
\]
which is impossible since $\bar{C}_i$ are Cartier. Hence $\pi_1^{\ast}F=2E$ and $m_1=m_2=1/2$ and~\ref{sec5-eq-14} holds.

Next I will show that $Y_1$ is smooth. From~\ref{sec5-eq-14} it follows that
\[
g^{\ast}(\tilde{C}_1+\tilde{C}_2)=\bar{C}_1+\bar{C}_2+F.
\]
Since $\tilde{C}_1+\tilde{C}_2\in|2K_Y+2C|$, $\tilde{C}_1+\tilde{C}_2$ is Cartier and hence $\bar{C}_1+\bar{C}_2+F$ is Cartier as well. Since $\bar{C}_1$, $\bar{C}_2$ and $F$ are smooth, then the only possible singularities of $Y_1$ are at $\bar{C}_1\cap F$ and $\bar{C}_2\cap F$. I will show however that $\bar{C}_i$, $i=1,2$, are both Cartier and hence $Y_1$ is smooth as claimed. Since $2\tilde{C}_1 \sim \tilde{C}_1+\tilde{C}_2$, and are both Cartier, it follows that $g^{\ast}(2\tilde{C}_1)\sim g^{\ast}(\tilde{C}_1+\tilde{C}_2)$. Hence
\[
2\bar{C}_1+F \sim \bar{C}_1+\bar{C}_2+F
\]
and therefore $\bar{C}_1 \sim \bar{C}_2$. Hence $\mathcal{O}_{Y_1}(\bar{C}_1)\cong \mathcal{O}_{Y_1}(\bar{C}_2)$. But $\bar{C}_1 \cap \bar{C}_2 =\emptyset$ (because $C^{\prime}_1 \cap C^{\prime}_2 =\emptyset$, where $C_1^{\prime}, C_2^{\prime}$ are the birational transforms of $C_1, C_2$ in $X_1$). Hence $\mathcal{O}_{Y_1}(\bar{C}_i)$ are invertible, $i=1,2$, and therefore $\bar{C}_1$ and $\bar{C}_2$ are both Cartier as claimed and hence $Y_1$ is smooth. Therefore $Y$ has exactly one singular point which is of type $A_1$. Moreover the diagram~\ref{sec5-diagram-5} is its resolution.

Next I claim that 
\begin{gather}\label{sec5-eq-17}
K_X\cdot \Delta +\Delta^2 +c_2(X)=1.
\end{gather}
Indeed. With notation as in~\ref{sec5-diagram-5}, since $Y_1$ is smooth, $D_1$ has no isolated fixed points. Hence if $\Delta_1$ is its divisorial part, then from Proposition~\ref{size-of-sing}
\[
K_{X_1}\cdot \Delta_1 +\Delta_1^2+c_2(X_1)=0.
\]
But from~\ref{sec5-eq-16} we get that $\Delta_1=f_1^{\ast}\Delta +E$. Now from this, the fact that $c_2(X_1)=c_2(X)+1$, the previous equation and some straightforward calculations we get~\ref{sec5-eq-17}.

If $\mathrm{p}_g(X)=2$ then $\chi(\mathcal{O}_X)\in\{2,3\}$. 

Suppose that $\chi(\mathcal{O}_X)=2$. Then since $K_X^2=1$, from Noethers formula we get that $c_2(X)=23$. Hence $c_2(Y_1)=\chi_{et}(Y)+1=c_2(X)+1=24$. Then since $Y_1$ is rational, from Noethers formula for $Y_1$ it follows that 
\[
K_{Y_1}^2=12\chi(\mathcal{O}_{Y_1})-24=1-24=-12
\]
Hence since $f_1$ is crepant, $K_Y^2=K_{Y_1}^2=-12$. From the adjunction formula $K_Y=\pi^{\ast}K_Y+\Delta$ we get that
\[
-24=2K_Y^2=(K_X-\Delta)^2=1+\Delta^2-2K_X\cdot \Delta,
\]
and hence
\begin{gather}\label{sec5-eq-20}
\Delta^2-2K_X \cdot \Delta =-25.
\end{gather}
However, since $c_2(X)=23$,~\ref{sec5-eq-17} gives also that
\begin{gather}\label{sec5-eq-21}
K_X\cdot \Delta +\Delta^2=-22.
\end{gather}
Now from~\ref{sec5-eq-20} and~\ref{sec5-eq-21} it follows that $K_X \cdot \Delta =1$ and $\Delta^2=-23$. But now for the exactly the same reasons as in~\ref{sec5-eq-222}, this is impossible. Hence the case $\chi(\mathcal{O}_X)=2$ is impossible.

Suppose that $\chi(\mathcal{O}_X)=3$. Arguing similarly as before we get that
\begin{gather}\label{sec5-eq-22}
K_X\cdot \Delta =5\\
\Delta^2=-39. \nonumber
\end{gather}
I will show that these relations are impossible by examining all possible cases for the structure of $\Delta$ as a cycle. Suppose that 
\[
\Delta=\sum_{i=1}^kn_i \Delta_i,
\]
where $\Delta_i$ are distinct prime divisors. Since $K_X$ is ample and $K_X \cdot \Delta =5$, it follows that $k\leq 5$ and there are the following possibilities.
\begin{enumerate}
\item $\Delta$ is reduced. Then $\Delta $ has at most 5 irreducible components.
\item $\Delta$ is not reduced. Then there are the following possibilities
\begin{enumerate}
\item $\Delta =2\Delta_1+\Delta_2+\Delta_3+\Delta_4$, and $K_X\cdot \Delta_1=1$, for all $i$.
\item $\Delta =2\Delta_1+\Delta_2+\Delta_3$, and $K_X\cdot \Delta_1=K_X\cdot \Delta_2=1$, $K_X\cdot \Delta_3=2$.
\item $\Delta =2\Delta_1+\Delta_2$, and $K_X\cdot \Delta _1 =1$, $K_X\cdot \Delta_2=3$, or $K_X\cdot \Delta _1 =2$, $K_X\cdot \Delta_2=1$.
\item $\Delta=2\Delta_1+2\Delta_2+\Delta_3$, and $K_X \cdot \Delta_i=1$, for all $i$.
\item $\Delta=4\Delta_1+\Delta_2$, and $K_X\cdot \Delta_1=K_X\cdot \Delta_2=1$.
\item $\Delta = 5\Delta_1$, $K_X \cdot \Delta_1=1$.
\item $\Delta=3\Delta_1 +\Delta_2+\Delta_3$, and $K_X\cdot \Delta_i=1$, for all $i$.
\item $\Delta=3\Delta_1+2\Delta_2$, and $K_X\cdot \Delta_i=1$, for all $i$. 
\end{enumerate}
\end{enumerate}
The proof that the equations~\ref{sec5-eq-22} are impossible will be by studying each one of the above cases separately. In all cases except 2.h, there are no solutions to~\ref{sec5-eq-22} under the restriction coming from the genus formula $2\mathrm{p}_a(\Delta_i)-2=K_X\cdot \Delta_i+\Delta_i^2$. Next I will work the cases 1. and 2.h. The cases 2.a to 2.g are treated in exactly the same way as 1. but 2.h needs some deeper geometric argument since there is a numerical solution to~\ref{sec5-eq-22}.

Suppose then that $\Delta=\sum_{i=1}^k\Delta_i$, $i\leq 5$ is reduced. Then
\begin{gather*}
K_X\cdot \Delta +\Delta^2=\sum_{i=1}^k(K_X\cdot \Delta_i+\Delta_i^2)+2\sum_{1\leq i<j\leq k}\Delta_i \cdot \Delta_j \geq \\
\sum_{i=1}^k(K_X\cdot \Delta_i+\Delta_i^2)=\sum_{i=1}^k(2\mathrm{p}_a(\Delta_i)-2) \geq -2k \geq -10,
\end{gather*}
which is impossible since from~\ref{sec5-eq-22}, $K_X\cdot \Delta +\Delta^2=-34$.

Suppose now that $\Delta=3\Delta_1+2\Delta_2$, $K_X\cdot \Delta_1 =K_X\cdot \Delta_2=1$. Then from the genus formula it follows that $K_X\cdot \Delta_i +\Delta_i^2 \geq -2$ and hence $\Delta_i^2\geq -3$. Then it is easy to see that the only solutions to~\ref{sec5-eq-22} are $\Delta^2_i=-3$, $i=1,2$, and $\Delta_1\cdot \Delta_2 =0$. In particular $\mathrm{p}_a(\Delta_i)=0$ and hence $\Delta_i=\mathbb{P}^1$, $i=1,2$. 

Fix notation as in~\ref{sec5-diagram-2} and let $\tilde{\Delta_i}$, $i=1,2$, be the images of $\Delta_i$ in $Y$, with reduced structure. From the previous discussion, $X^{\prime}$ is the blow up of the unique isolated fixed point $P$ of $D$ and $Y^{\prime}$ the minimal resolution of the singularity $Q=\pi(P)\in Y$. Moreover, $Q\in Y$ is an $A_1$ singular point and in particular $K_Y$ is Cartier. Then
\[
K_X=\pi^{\ast}K_Y+3\Delta_1+2\Delta_2.
\]
From this it follows that 
\begin{gather}\label{sec5-eq-23}
\pi^{\ast}K_Y\cdot \Delta_1=10\\
\pi^{\ast}K_Y\cdot \Delta_2 =7\nonumber
\end{gather}
From the projection formula we get that
\begin{gather*}
K_Y \cdot \pi_{\ast}\Delta_1=10\\
K_Y\cdot \pi_{\ast}\Delta_2 =7.
\end{gather*}
Moreover, $\pi_{\ast}\Delta_i$ is equal to either $\tilde{\Delta}_i$ or $2\tilde{\Delta}_i$, depending on whether $\Delta_i$ is an integral curve for $D$ or not. From the second equation and since $K_Y$ is Cartier, it follows that $\pi_{\ast}\Delta_2=\tilde{\Delta}_2$ and hence $\Delta_2$ is not an integral curve for $D$. Hence $\pi^{\ast}\tilde{\Delta}_2=2\Delta_2$ and therefore, $\tilde{\Delta}_2^2=-6$ and $K_Y \cdot \tilde{\Delta}_2 = 7$. I will now show that $Q\in \tilde{\Delta}_2$. If $Q\not\in\tilde{\Delta}_2$, then $\tilde{\Delta}_2$ is in the smooth part of $Y$. Hence from the genus formula again it follows that 
\[
2\mathrm{p}_a(\tilde{\Delta}_2)-2=K_Y\cdot \tilde{\Delta}_2+\tilde{\Delta}_2^2=7-6=1,
\]
which is impossible. Hence $Q \in \tilde{\Delta}_2$. Let $\Delta^{\prime}_2$ be the birational transform of $\tilde{\Delta}_2$ in $Y^{\prime}$. Then
\[
g^{\ast}\tilde{\Delta}_2=\Delta_2^{\prime} +mF
\]
for some $m \in \mathbb{Z}$. Since $F^2=-2$ and $\tilde{\Delta}_2$ is smooth, it follows that $m=1/2$. But then
\[
(\Delta^{\prime}_2)^2=\tilde{\Delta}_2^2-\frac{1}{2}=-6-\frac{1}{2}\not\in \mathbb{Z},
\]
which is impossible since $Y^{\prime}$ is smooth. Therefore the case 2.h is impossible too.

\end{proof}

\section{Examples.}\label{examples}
As mentioned in the introduction, there are several examples by now of canonically polarized surfaces with or without non-trivial global vector fields. The most common method to obtain them is as quotients of a rational surface with a rational vector field. By this method one always obtains uniruled surfaces, but nonuniruled canonically polarized surfaces with vector also exist~\cite{SB96}. However, In characteristics $p\not= 2$, it is not known if non uniruled examples exist.

Smooth hypersurfaces in $\mathbb{P}^3_k$ of degree $\geq 5$ have no vector fields~\cite{MO67}. The proof given in~\cite{MO67} shows that $H^0(T_X)=0$ by using standard exact sequences of $\mathbb{P}^3_k$. However, like Case 4. of the proof of Theorem~\ref{mult-type} it also follows from the Kodaira-Nakano vanishing which holds in this case since any smooth hypersurface lifts to $W_2(k)$.

A Godaux surface $X$ with $\pi_1^{et}(X)=\{1\}$ has no vector fields. This follows from Theorems~\ref{mult-type},~\ref{additive-type}. In particular Godeaux surfaces $\pi_1^{et}(X)=\mathbb{Z}/5\mathbb{Z}$ do not have vector fields. It is known~\cite{La81} that a general Godeaux surface with $\pi_1^{et}(X)=\mathbb{Z}/5\mathbb{Z}$ in any characteristic $p\not= 5$ is the quotient of a smooth quintic in $\mathbb{P}^3$ by a free action of $\mathbb{Z}/5\mathbb{Z}$. Then the fact that $X$ has no global vector fields follows also from the fact that smooth hypersurfaces have no global vector fields. However, this proof only works for general $X$ while Theorems~\ref{mult-type},~\ref{additive-type} give the result for any $X$.

Smooth examples of canonically polarized surfaces were given W. Lang~\cite{La83} and N.I. Shepherd-Barron~\cite{SB96}. In particular for any Del Pezzo surface $Y$ and $H \in |-K_Y|$, N.I. Shepherd-Barron~\cite[Theorem 5.2]{SB96} constructed a surface $X$ which admits maps
\[
X \stackrel{\alpha}{\rightarrow} Z \stackrel{\beta}{\rightarrow} Y,
\]
such that $\alpha$, $\beta$ are purely inseparable of degree 2,  $K_X^2=s$, $Z$ is a K3 surface with $12+s$ nodes and $\alpha^{\ast}\beta^{\ast}(\frac{1}{2}H)\sim K_X$. If $Y=\mathbb{P}^1 \times \mathbb{P}^1$, then $K_Y=-2G$ and hence $K_X=\alpha^{\ast}\beta^{\ast}G$. Since $K_Z=0$, the adjunction for purely inseparable morphisms~\cite{Ek87} shows that $\alpha $ is a foliation over $Z$ defined by the subsheaf 
$L=\mathcal{O}_X(\alpha^{\ast}\beta^{\ast}G)$ of $T_X$. Since $L=\mathcal{O}_X(\alpha^{\ast}\beta^{\ast}G)$ has sections, $X$ has nontrivial global vector fields. Moreover, it is unirational and $K_X^2=8$. In fact I do not know of any examples of smooth canonically polarized surfaces with vector fields and $K^2<8$. 

Finally, Liedtke~\cite{Li08} has constructed a series of examples of uniruled surfaces of general type with arbitrary high $K^2$. These examples are quotients of $\mathbb{P}^1 \times \mathbb{P}^1$ by rational vector fields. However the resulting surfaces are only of general type and not canonically polarized. However, taking their canonical models one obtains examples of canonically polarized surfaces with canonical singularities and $K^2$ arbitrary high. 

This is what happens in my knowledge for smooth canonically polarized surfaces. However, singular surfaces should be studied too. Especially because they are important in the moduli problem of canonically polarized surfaces and in particular its compactification. 

If $X$ is allowed to be singular and there are no restrictions on the singularities, then $K_X^2$ can take any value. Next I will present two examples of singular surfaces with vector fields and low $K^2$. The first example shows that unlike the smooth case (Theorem~\ref{lifts-to-zero}), the property \textit{Lifts to characteristic zero} does not imply smoothness of the automorphism scheme in the singular case. The second example shows that even in the presence of the mildest possible singularities (like canonical), the property \textit{Smooth automorphism scheme} is not deformation invariant and cannot be used to construct proper Deligne-Mumford moduli stacks in positive characteristic. 

\begin{example}\label{ex1}
In this example  I will construct a singular canonically polarized surface $X$ with $K_X^2=1$. Moreover this surface lifts to characteristic zero and nevertheless, unlike smooth surfaces that lift to characteristic zero, it has vector fields.

Let $Y$ be the weighted projective space $\mathbb{P}_k(2,1,1)$. It is singular with one singularity locally isomorphic to $xy+z^2=0$. Let 
\[
\pi \colon X=\mathrm{Spec} \left( \mathcal{O}_Y \oplus \mathcal{O}_Y(-5) \right) \rightarrow Y
\]
be the 2-cyclic cover defined by $\mathcal{O}_Y(-5)$ and a general section $s$ of $\mathcal{O}_Y(5)^{[2]}=\mathcal{O}_Y(10)$. Then $X$ is normal, $K_X$ is ample and $K_X^2=1$. Moreover, $X$ is liftable to characteristic zero and has global vector fields of multiplicative type.

From the theory of weighted projective spaces it follows that
\[
H^0(L^{[2]})=H^0(\mathcal{O}_Y(10))=k[x_0,x_1,x_2]_{(10)},
\]
the space of homogeneous polynomials of degree 10 with weights, $w(x_0)=2$, $w(x_i)=1$, $i=1,2,3$. Let $s=x_0^5+f(x_1,x_2) \in k[x_0,x_1,x_2]_{(10)}$. Then I claim that for general choice of $f(x_1,x_2)$, $X$ is normal. Indeed, locally over a smooth point of $Y$, 
\[
\mathcal{O}_X=\frac{\mathcal{O}_Y[t]}{(t^2-s)}.
\]
It is now straightforward to check that for general choice of $f(x_1,x_2)$, $X$ is smooth. Hence $X$ is normal. 

Next I will show that $K_X$ is ample and $K_X^2=1$. Indeed, by the construction of $X$ as a 2-cyclic cover over $Y$,
\[
\omega_X=\pi^{\ast}(\omega_Y\otimes \mathcal{O}_Y(5))^{\ast\ast}=\pi^{\ast}\mathcal{O}_Y(1)^{\ast\ast}.
\]
From this it follows that $\omega_X^{[2]}=\pi^{\ast}\mathcal{O}_Y(2)$ and hence $K_X$ is ample and has index 2. Moreover,
\[
K_X^2=\frac{1}{4}c^2_1(\omega_X^{[2]})=\frac{1}{4}2c_1^2(\omega_Y^{[2]}\otimes \mathcal{O}_Y(10))=\frac{1}{2}c_1^2(\mathcal{O}_Y(2))=\frac{1}{2}4c_1^2(\mathcal{O}_Y(1))=\frac{1}{2}\cdot 4 \cdot \frac{1}{2}=1.
\]
Finally since $Y$, $\mathcal{O}_Y(5)$ and $s$ lift to characteristic zero, the construction of $X$ as a 2-cyclic cover lifts also to characteristic zero. However, by its construction as a 2-cyclic cover, $X$ has nontrivial global vector fields of multiplicative type.
\end{example}

\begin{example}\label{ex2}

In this example I will construct a singular surface $X$ with canonical singularities of type $A_n$ such that $K_X^2=5$, $X$ has nonzero global vector fields and moreover there is a flat morphism $f \colon \mathcal{X} \rightarrow C$, where $C$ is a curve of finite type over $k$ such that $X=f^{-1}(s)$ for some $s\in C$ and whose general fiber has no vector fields. Therefore the property \textit{Smooth automorphism scheme} is not deformation invariant (even in the presence of the mildest possible singularities)
and cannot be used to construct proper moduli stacks in positive characteristic. 

Let $X \subset \mathbb{P}^3_k$ be the quintic surface given by
\[
xy(x^3+y^3+z^3)+zw^4=0.
\]
It is not difficult to check that its singularities are locally isomorphic to $xy+zf(x,y,z)=0$ and therefore they are canonical of type $A_n$. Moreover, the equation of $X$ 
is invariant under the graded derivation $D=w\frac{\partial}{\partial w}$ of $k[x,y,z,w]$. which therefore induces a nonzero global vector field on $X$. $X$ is smoothable by $X_t$ given by 
\[
(1-t)\left( xy(x^3+y^3+z^3)+zw^4\right) +t\left( x^5+y^5+z^5+w^5\right)=0,
\]
$t \in k$. For $t \not=0$, $X_t$ is a smooth quintic surface in $\mathbb{P}^3_k$ and hence it has no global vector fields and therefore $\mathrm{Aut}(X_t)$ is smooth for $t\not=0$.

\end{example}

\end{document}